\newtheorem{theorem}{Theorem}[section]
\newtheorem{lemma}{Lemma}[section]
\newtheorem{proposition}{Proposition}[section]
\newtheorem{assumption}{Assumption}[section]
\newtheorem{conjecture}{Conjecture}[section]
\newtheorem{property}{Property}[section]
\newtheorem{remark}{Remark}[section]
\newtheorem{delayedproof}{Proof}[section]
\let\oldremark\remark
\renewcommand{\remark}{\oldremark\normalfont}
\let\olddelayedproof\delayedproof
\renewcommand{\delayedproof}{\olddelayedproof\normalfont}
\renewcommand{\vec}[1]{{\normalfont\textbf{\text{#1}}}}
\newcommand{\bs}[1]{\boldsymbol{#1}}
\newcommand{\mat}[1]{\expandafter\MakeUppercase\expandafter{\text{#1}}}
\newcommand{\dx}{\mathrm{d}x}
\newcommand{\ds}{\mathrm{d}s}
\newcommand{\Real}{\mathbb{R}}
\newcommand{\domain}{\Omega}
\newcommand{\Tau}{\mathcal{T}}
\newcommand{\Chi}{\mathcal{X}}
\newcommand{\F}{\mathcal{F}}
\newcommand{\polyspace}{\mathbb{P}}
\newcommand{\V}{\mathcal{V}}
\newcommand{\Q}{\mathcal{Q}}
\newcommand{\W}{\mathcal{W}}
\newcommand{\C}{\mathcal{C}}
\newcommand{\B}{\mathcal{B}}
\renewcommand{\P}{\mathcal{P}}
\newcommand{\vareps}{\boldsymbol{\varepsilon}}
\newcommand{\bx}{B}
\newcommand{\face}{F}
\newcommand{\dmond}{D}
\renewcommand{\triangle}{T}
\newcommand{\vertex}{\vec{p}}
\newcommand{\faceCentre}{\vec{f}}
\newcommand{\pdd}[3]{\frac{\partial^{#2} #1}{\partial #3^{#2}}}
\newcommand{\norm}[1]{\left\lVert#1\right\rVert}
\newcommand{\abs}[1]{\left\vert#1\right\vert}
\newcommand{\seminorm}[1]{\abs{#1}}
\newcommand{\trinorm}[1]{
    {{\left\vert\kern-0.25ex\left\vert\kern-0.25ex\left\vert \left(#1\right)
    \right\vert\kern-0.25ex\right\vert\kern-0.25ex\right\vert}}
}
\newcommand{\coloneq}{{:{\kern-0.5ex =}}
}
\newcommand{\jump}[1]{\left[\!\left[ #1 \right]\!\right]}
\newcommand{\indicator}[1]{\Chi_{#1}}
\newcommand{\PtoBox}{\Pi_h}
\title{The Rhie-Chow stabilized Box Method for the Stokes problem}
\author{G. Negrini$^a$, N. Parolini$^a$ and M. Verani$^a$}
\begin{document}

\maketitle

\begin{center}
	{\small
		$^a$ MOX, Dipartimento di Matematica, Politecnico di Milano, Piazza Leonardo da Vinci 32, I-20133 Milano, Italy
	}
\end{center}

\pagenumbering{arabic}

\begin{abstract}
\noindent The Finite Volume method (FVM) is widely adopted in many different applications because of its built-in conservation properties, its ability to deal with arbitrary mesh and its computational efficiency. In this work, we consider the Rhie-Chow stabilized Box Method (RCBM) for the approximation of the Stokes problem. The Box Method (BM) is a piecewise linear Petrov-Galerkin formulation on the Voronoi dual mesh of a Delaunay triangulation, whereas the Rhie-Chow (RC) stabilization is a well known stabilization technique for FVM.
The first part of the paper provides a variational formulation of the RC stabilization and discusses the validity of crucial properties relevant for the well-posedeness and convergence of RCBM. Moreover, a numerical  exploration of the convergence properties of the method on 2D and 3D test cases is presented. The last part of the paper considers the  theoretically justification of the well-posedeness of RCBM and the experimentally observed  convergence rates. This latter justification hinges upon suitable assumptions, whose validity is numerically explored.
\end{abstract}

\section{Introduction}
The FVM is a popular numerical strategy for the spatial discretization of partial differential equations widely used for the solution of industrial flow problems. One crucial property of FVM is that, by construction, physical conservation laws governing in a given application are naturally discretized preserving global and local conservation properties. This makes the method very attractive when dealing with problems where conservation plays an important role, such as fluid mechanics and heat and mass transfer. This property is a consequence of the formulation of FVM. In fact, the core procedure of FVM is the imposition of the conservation law on each cell, or control volume, of the mesh \cite{LEVEQUE1992, MANGANI2015},  which is usually performed using the Gauss theorem and then numerically reconstructing fluxes through each face of the control volume. The conservation properties of the Finite Volume method give raise to robust numerical schemes that work on arbitrarily complex geometries \cite{HERBIN2000}.

In the present work we consider a particular formulation of the FVM, namely the so-called Box Method (BM) (also known as Finite Volume Element method or piecewise linear FVM). This method has been the object of an intense study in the literature. It was first introduced for scalar elliptic problems in \cite{BANKROSE1987, HACKBUSH1989} and, later, in \cite{EWINGLIN2002, XUZOU2009}. More recently, a stabilized version of BM has been applied to Stokes system in \cite{QUARTERONI2011}.

Two attractive features of BM are the simplicity of the formulation and an elegant relationship with the Finite Element method (FEM) (in this respect, see, e.g. \cite{BANKROSE1987, NEGRINI2021}), which hinges upon the fact that BM is the ``dual method'' of the finite element method,
i.e. it consists of a piecewise linear Petrov-Galerkin formulation on the Voronoi dual mesh of a Delaunay triangulation.

In this work, we introduce a variational formulation of the  stabilized version of BM to approximate the Stokes problem. This methods turns out to be equivalent to the discretization of the Stokes problem using piecewise linear elements for both velocity and pressure also employing numerical discretization of fluxes, where a suitable stabilization term is added in order to make the discrete problem well-posed. Concerning this latter aspect, we introduce a stabilization term equivalent to the so-called Rhie-Chow interpolation \cite{RHIECHOW1983, ZHANG2014, FERZIGER2020}, a common stabilization technique in Finite Volume applications. In the paper, we refer to the resulting stabilized method as to the Rhie-Chow Box Method (RCBM). We first present 2D and 3D numerical results to empirically explore the convergence properties of RBCM, conjecturing that the convergence rate equals the one of the piecewise linear FEM. The solution of these test cases is performed with OpenFOAM, an open-source software widely employed in both industrial and academic CFD applications.
Then we provide a detailed study of the continuity, consistency, coercivity and \textit{inf-sup} stability  properties of the RC stabilization. Despite RC stabilization is widely used in the OpenFOAM community, our results seem to be novel. The aforementioned properties are then employed to theoretically study the well-posedeness and convergence properties of RCBM. Also this second aspect seems to be novel in the literature.

More specifically,  the outline of the paper is as follows. In Section \ref{sec:stokes} we introduce the physical problem in the continuous variational framework. In Section \ref{sec:boxmethod} we first introduce the discrete functional setting and the construction of a Voronoi-dual mesh and then we introduce the general form of  stabilized Box Methods for the approximation of the Stokes problem. In Section \ref{sec:rhiechow} we introduce  the Rhie-Chow stabilization term  and discuss some crucial properties (consistency, continuity, coercivity and \textit{inf-sup} stability), which will be important to prove the well-posedeness and the convergence of RCBM. In Section \ref{sec:numerical} we numerically explore the convergence properties of RCBM. Finally, in Section \ref{sec:theory} we go through the theoretical analysis to prove the well-posedness and convergence of RCBM, which are obtained under suitable assumptions, whose validity is numerically explored.

\section{The Stokes problem} \label{sec:stokes}
Let $\domain \subset \mathbb{R}^d,\,d=2,3$ be a polyhedral bounded domain and let $\Gamma = \partial \domain$ be its boundary. We consider the steady incompressible Stokes problem for a Newtonia fluid:
\begin{equation}
\begin{aligned}
-\nu \Delta \vec{u} + \nabla p = \vec{f} &\quad \text{in }\domain, \\
\nabla \cdot \vec{u} = 0  &\quad \text{in }\domain,\\
\vec{u} = \vec{g} &\quad \text{on }\Gamma = \partial \domain
\end{aligned}
\label{eq:stokes}
\end{equation}
where $\vec{f} \in [L^2(\domain)]^d$ and $\vec{g} \in [H^{1/2}(\partial \domain)]^d$. Let $\V = H^1(\domain),\, \V_g = H^1_{\Gamma} = \{v \in \V:\, v=g \text{ on }\Gamma\}$ and $\Q=L^2(\domain)$ where $g$ is a sufficiently regular function. We will denote by a bold symbol the $d$-dimensional counterparts of those spaces, namely, $\bs{\V}=[H^1(\domain)]^d$, $\bs{\V}_{\vec{g}}=[H^1_{\Gamma}]^d$. Moreover, from now on, the standard norms for $H^1$ and $L^2$ spaces have to be intended on the whole domain $\domain$ where the domain of integration is not specified.

Then, we define the following bilinear forms:
\begin{equation}
\begin{aligned}
a:\bs{\V}\times\bs{\V} \rightarrow\mathbb{R}: \quad
& a(\vec{u},\vec{v})=\int_\domain \nu \nabla \vec{u} : \nabla \vec{v} \dx,
&& \quad \forall\vec{u},\vec{v} \in \bs{\V}, \\
b:\bs{\V}\times\Q \rightarrow\mathbb{R}: \quad
& b(\vec{v},p)=-\int_\domain \nabla \cdot \vec{v} \,p\dx,
&& \quad \forall\vec{v} \in \bs{\V}, \forall p \in \Q. \\
\end{aligned}
\end{equation}

The weak formulation of the problem reads: find $(\vec{u},p) \in \bs{\V} \times \Q,\, \vec{u}=\vec{g}\text{ on }\Gamma$, such that
\begin{equation}
\begin{aligned}
a(\vec{u},\vec{v}) + b(\vec{v},p)
&= (\vec{f},\vec{v})_\domain, \quad \forall \vec{v}\in \bs{\V}_0, \\
b(\vec{u},q) &=0, \quad \forall q \in \Q,
\end{aligned}
\label{eq:stokes:weak}
\end{equation}
where $(\cdot,\cdot)_\domain$ is the usual $L^2$ scalar product on $\domain$.

We also state the standard well-posedness result \cite{BABUSKA1971} for problem \eqref{eq:stokes:weak}:
\begin{theorem}[Well-posedness] \label{thm:stokes_well_posed_0}
The saddle-point problem \eqref{eq:stokes:weak} is well-posed if
\begin{enumerate}
\item the bilinear form $a$ is continuous and coercive;
\item the bilinear form $b$ is continuous;
\item the \textit{inf-sup} condition holds: $\exists\beta>0$ s.t.
\begin{equation}
\inf_{q\in \Q:q\neq0} \sup_{\vec{v}\in \bs{\V}:\vec{v}\neq\vec{0}}
\cfrac{b(\vec{v}, q)}{\norm{\nabla\vec{v}}_{L^2} \norm{q}_{L^2}}
\geq \beta > 0.
\label{eq:infsup}
\end{equation}
\end{enumerate}

Moreover, the solution $(\vec{u}, p) \in \bs{\V} \times \Q$ satisfies the following stability estimate:
\begin{equation}
\norm{\nabla \vec{u}}_{L^2(\domain)} + \norm{p}_{L^2(\domain)}
\leq C \left(\norm{\vec{f}}_{L^2(\domain)}
+ \norm{\vec{g}}_{H^{1/2}(\partial \domain)}\right).
\label{eq:stokes_stability}
\end{equation}

\end{theorem}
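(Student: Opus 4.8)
The plan is to recognize the statement as the classical Babu\v{s}ka--Brezzi theory for saddle-point problems and to verify that the three listed hypotheses are exactly the ingredients needed to invoke it. First I would dispose of the non-homogeneous boundary datum: since $\vec{g}\in[H^{1/2}(\Gamma)]^d$, the trace theorem furnishes a lifting $\vec{u}_{\vec{g}}\in\bs{\V}$ with $\vec{u}_{\vec{g}}=\vec{g}$ on $\Gamma$ and $\norm{\nabla\vec{u}_{\vec{g}}}_{L^2}\le C\norm{\vec{g}}_{H^{1/2}(\Gamma)}$. Writing $\vec{u}=\vec{u}_0+\vec{u}_{\vec{g}}$ with $\vec{u}_0\in\bs{\V}_0$ turns \eqref{eq:stokes:weak} into a homogeneous problem for $(\vec{u}_0,p)$ whose modified right-hand sides are $\vec{v}\mapsto(\vec{f},\vec{v})_\domain-a(\vec{u}_{\vec{g}},\vec{v})$ and $q\mapsto-b(\vec{u}_{\vec{g}},q)$, both bounded functionals thanks to the continuity of $a$ and $b$ in hypotheses~1 and~2.

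Next I would introduce the kernel $Z=\{\vec{v}\in\bs{\V}_0:\ b(\vec{v},q)=0\ \forall q\in\Q\}$ of weakly divergence-free fields and split the argument into existence of the velocity and recovery of the pressure. On $Z$ the problem reduces to finding $\vec{u}_0\in Z$ such that $a(\vec{u}_0,\vec{v})=(\vec{f},\vec{v})_\domain-a(\vec{u}_{\vec{g}},\vec{v})$ for all $\vec{v}\in Z$; since $a$ is continuous and coercive (hypothesis~1, with the Poincar\'e inequality ensuring that $\norm{\nabla\cdot}_{L^2}$ is a genuine norm on $\bs{\V}_0$), the Lax--Milgram lemma yields a unique such $\vec{u}_0$, with $\norm{\nabla\vec{u}_0}_{L^2}$ controlled by the data. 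For the pressure I would exploit the inf-sup condition \eqref{eq:infsup}: testing the first equation against arbitrary $\vec{v}\in\bs{\V}_0$ shows that the residual $\vec{v}\mapsto(\vec{f},\vec{v})_\domain-a(\vec{u}_0+\vec{u}_{\vec{g}},\vec{v})$ annihilates $Z$, hence belongs to the annihilator $Z^{\circ}$. By the Banach closed range theorem, \eqref{eq:infsup} guarantees that the operator $B\colon\bs{\V}_0\to\Q'$ induced by $b$ has closed range and that $Z^{\circ}$ coincides with the range of its transpose $B^{\mathsf T}$, so there exists a unique $p\in\Q$ realizing that residual as $b(\cdot,p)$; the inf-sup constant $\beta$ then bounds $\norm{p}_{L^2}$ in terms of the residual, and thus of the data.

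Finally, summing the velocity and pressure bounds and reinserting the lifting estimate $\norm{\nabla\vec{u}_{\vec{g}}}_{L^2}\le C\norm{\vec{g}}_{H^{1/2}(\Gamma)}$ produces the stability estimate \eqref{eq:stokes_stability}, from which uniqueness of $(\vec{u},p)$ follows as well. I expect the pressure-recovery step to be the main obstacle: one must argue carefully that the inf-sup condition is equivalent to $B$ having closed range together with the identification of $Z^{\circ}$ with the range of $B^{\mathsf T}$, which is precisely where the closed range theorem enters; by contrast, the velocity existence and the lifting are routine once coercivity on $\bs{\V}_0$ has been secured via Poincar\'e.
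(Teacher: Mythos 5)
The paper gives no proof of this theorem---it is quoted as the standard Babu\v{s}ka--Brezzi result with a citation---so your proposal is judged against the classical argument, whose overall architecture (lifting, kernel reduction, Lax--Milgram, closed-range pressure recovery) you correctly reproduce. There is, however, a genuine gap in the velocity step. You correctly record that after the splitting $\vec{u}=\vec{u}_0+\vec{u}_{\vec{g}}$ the mass equation becomes $b(\vec{u}_0,q)=-b(\vec{u}_{\vec{g}},q)$ for all $q\in\Q$, i.e.\ the constraint on $\vec{u}_0$ is \emph{inhomogeneous}; yet in the next sentence you seek $\vec{u}_0\in Z$, which by definition satisfies $b(\vec{u}_0,q)=0$. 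These two requirements are incompatible unless the lifting happens to be divergence-free, and the generic trace-theorem lifting is not. As written, the field $\vec{u}=\vec{u}_0+\vec{u}_{\vec{g}}$ produced by your Lax--Milgram step violates the second equation of \eqref{eq:stokes:weak}, so the existence part of the proof fails at this point.

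The standard repair invokes the inf-sup condition once more, \emph{before} the kernel problem: \eqref{eq:infsup} makes the divergence operator $B:\bs{\V}_0\to\Q'$ an isomorphism from $Z^{\perp}$ onto its (closed) range, so one first picks $\vec{u}_1\in Z^{\perp}$ with $b(\vec{u}_1,q)=-b(\vec{u}_{\vec{g}},q)$ for all $q$ and $\norm{\nabla\vec{u}_1}_{L^2}\leq\beta^{-1}\sup_{q\neq 0}\abs{b(\vec{u}_{\vec{g}},q)}/\norm{q}_{L^2}\lesssim\norm{\vec{g}}_{H^{1/2}(\partial\domain)}$, and only then solves by Lax--Milgram for the remaining component $\vec{z}\in Z$ of $\vec{u}_0=\vec{u}_1+\vec{z}$; the stability estimate \eqref{eq:stokes_stability} then assembles exactly as you indicate. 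Two caveats your write-up should also make explicit: (i) the functional $q\mapsto -b(\vec{u}_{\vec{g}},q)$ lies in the range of $B$ only if $\int_{\Gamma}\vec{g}\cdot\vec{n}\ds=0$, the compatibility condition implicit in the theorem; (ii) since the momentum equation is tested only against $\vec{v}\in\bs{\V}_0$, for which $b(\vec{v},1)=0$, the kernel of $B^\intercal$ contains the constants, so your closed-range argument determines $p$ only up to an additive constant---the statement with $\Q=L^2(\domain)$ glosses over this, but a careful proof must either quotient by constants or fix the mean of $p$.
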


\section{The stabilized Box Method} \label{sec:boxmethod}
In this section we introduce the variational formulation of the stabilized Box Method  for the approximation of the Stokes problem (cf. problem \eqref{eq:stokes:box}). The formulation is obtained by employing suitable discrete variants (cf. equations \eqref{eq:stokes:bilforms:discrete})
of the bilinear forms appearing in the auxiliary problem \eqref{eq:stokes:continuous_box}.
Let $\Tau_h$ be a conforming and shape regular triangulation of $\domain$. We denote by $\triangle$ an element of $\Tau_h$ and by $h_\triangle$ the diameter of $\triangle \in \Tau_h$ and we set $h=\max_{\triangle\in\Tau_h} h_\triangle$. On $\Tau_h$ we define the following spaces of piecewise linear continuous functions:
\begin{equation*}
\begin{aligned}
\V_h &= \left\lbrace
    v_h \in C^0(\overline{\domain}):
    v_h\vert_\triangle \in \mathbb{P}^1(\triangle) \,\forall \triangle \in \Tau_h
\right\rbrace \subset \V, \\
\V_{h,g_h} &= \left\lbrace
    v_h \in \V_h:
    v_h = g_h \text{ on }\partial \domain
\right\rbrace \\
\end{aligned}
\end{equation*}
where $g_h$ is a suitable piecewise linear approximation of $g$ on $\partial \domain$. In this setting, for future use, we denote with $\seminorm{q}_{h,1}$ the broken $H^1$ norm on the elements of the triangulation for a function $q\in H^1_{loc}(\Omega)$.

\begin{figure}[ht]
\centering
\includegraphics[width=0.4\linewidth]{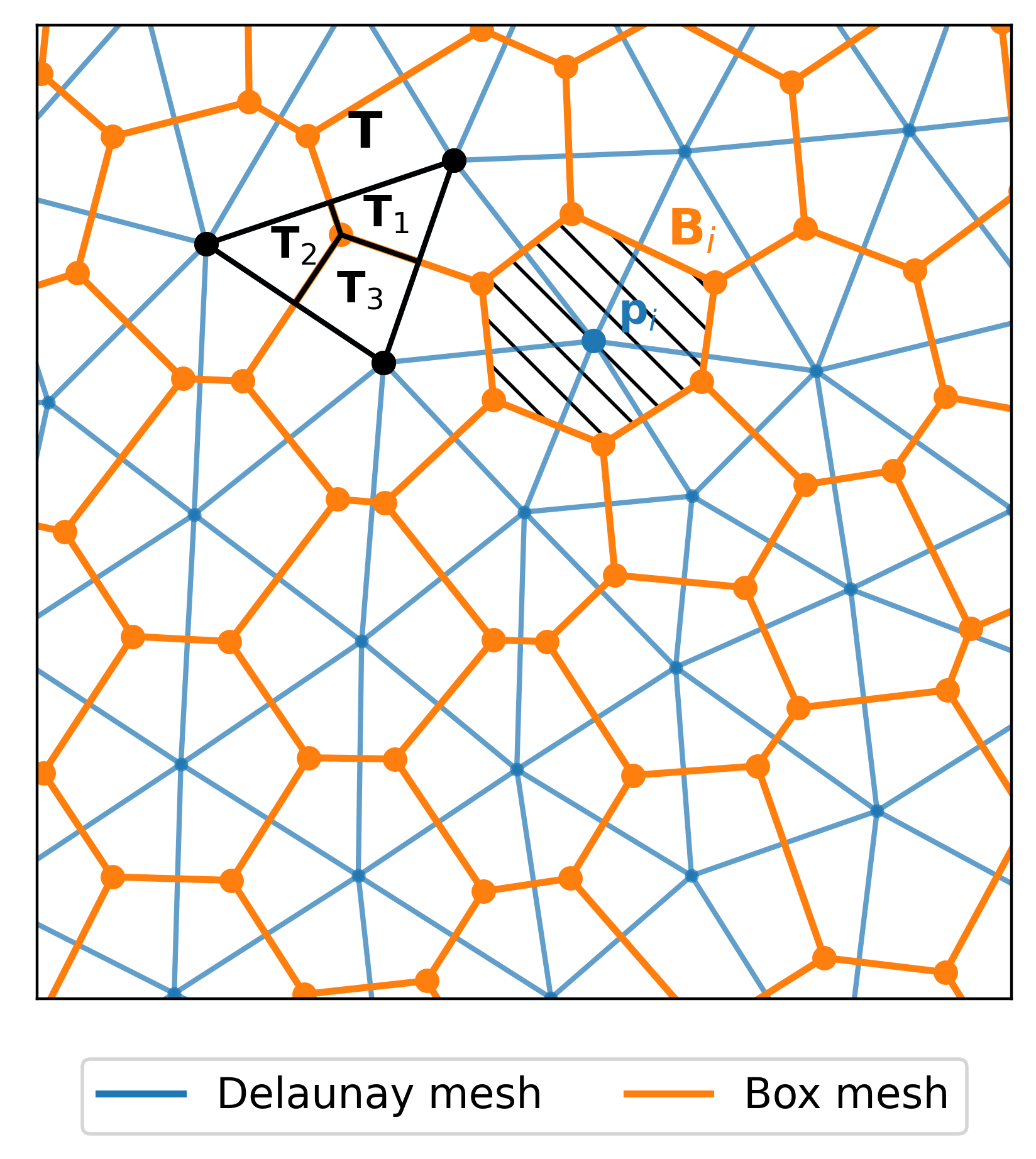}
\caption{Example of a Delaunay triangulation and its Voronoi dual mesh.}
\label{fig:tri_and_dual_mesh}
\end{figure}

We also define the ``box mesh'' (or dual mesh) $\B_h$ associated to $\Tau_h$. We introduce the set $\P_h=\{\vertex_i\}$ of vertices of $\Tau_h$ with $\P_h = \P_h^\partial \cup \P_h^o$, the set $\P_h^\partial$ containing the boundary vertices of $\Tau_h$ and the set $\P_h^o$ containing the interior vertices of $\Tau_h$. We denote by $\P_{\vertex_i}$ the set of triangles sharing vertex $\vertex_i$. Let then $\B_h=\{\bx_i\}_{\vertex_i \in \P_h^o}$ be the set of boxes $\bx_i$. Each box is a polyhedron with a skeleton consisting of straight lines connecting the circumcentres of each element $\triangle \in P_{\vertex_i}$ (see Figure \ref{fig:tri_and_dual_mesh}) and outer unit normal vector $\vec{n}_i$.

\begin{figure}[ht]
\centering
\includegraphics[width=0.6\linewidth]{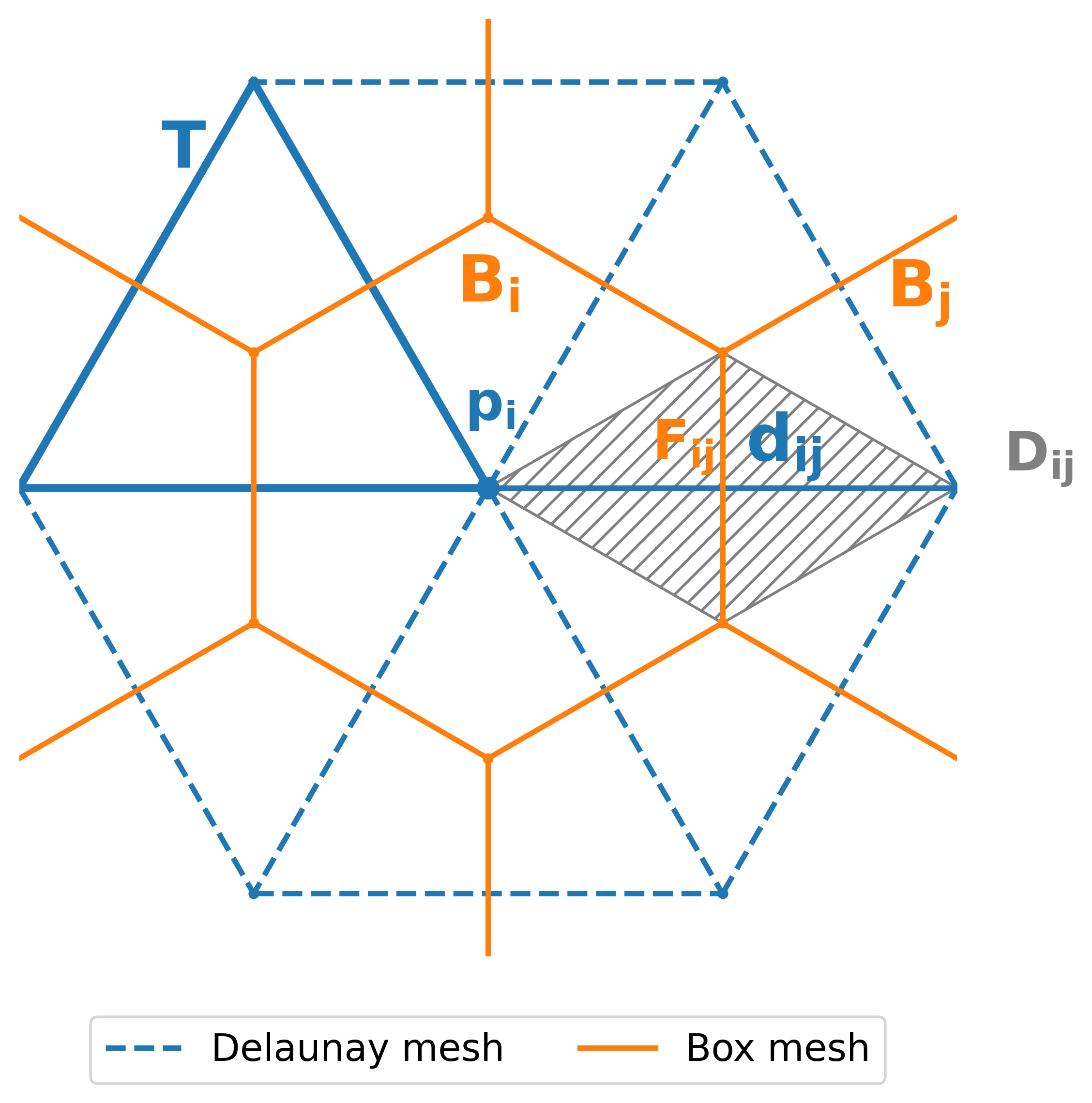}
\caption{Scheme of dual mesh geometrical quantities.}
\label{fig:scheme_dual_geom}
\end{figure}

We also introduce some mesh quantities (Figure \ref{fig:scheme_dual_geom}) that will be instrumental to define the discrete bilinear forms (see equations \eqref{eq:stokes:bilforms:discrete}). We denote by $N_{\bx_i}$ the number of faces of box $\bx_i$ and by $N_\bx=\max_{i}N_{\bx_i}$. Considering a box $\bx_i$, we denote by $\bx_{j}$ the box that shares the face $\face_{ij}$ with $\bx_i$ and by $\F_h = \{\face_{ij}\}$ the set of all faces. Let $d_{ij}$ be the distance between the barycentres of boxes $\bx_i$ and $\bx_{j}$, $\vec{n}_{ij}$ the unit normal vector directed outwards of $\bx_i$ and $w_{ij}$ be the interpolation weight (the ration between distance of $\bx_{j}$ barycentre from face $\face_{ij}$ and $d_{ij}$). Let $\dmond_{ij}$ be the ``diamond'', i.e. the polyhedron whose nodes are the barycentres and the common nodes of two boxes (shaded region in Figure \ref{fig:scheme_dual_geom}); notice that $\abs{\dmond_{ij}}=\abs{\face_{ij}}d_{ij}/d$. Moreover, let $G_i=\{\bx_j:\,\exists \bx_i \cap\bx_j=\face_{ij},\,\text{for some }\face_{ij}\in\F_h\}$ be the set of boxes that have a face in common with $\bx_i$. We also denote by $\phi_i=\phi\vert_{\bx_i}$ the restriction of a function $\phi_h\in{\V_h}$ evaluated on box $\bx_i$.

The following observation will be useful in the sequel of the paper.
\begin{property} \label{property:constant_on_diamond}
Let us consider a function $p_h \in \V_h$  and recall from Figure \ref{fig:scheme_dual_geom} the meaning of the segment $d_{ij}$, the face $\face_{ij}$ and the diamond $D_{ij}$. Moreover, we introduce the subset $\Tau_h^{ij}=\{\triangle\in\Tau_h:\triangle\cap\face_{ij}\neq\emptyset\}$ of elements of the primal mesh.
By the fact that $p_h \in C^0(\Omega)$, we have
\[\left(\nabla p_h\vert_{\triangle}\cdot\vec{n}_{ij}\right)\vert_{d_{ij}}=c_{ij},\]
  for every $\triangle\in\Tau_h^{ij}$, where $c_{ij}\in\mathbb{R}$ is a constant value, whose value varies with $F_{ij}$. Now, observing that $\vec{n}_{ij}$ is constant over the face $\face_{ij}$ and it is aligned with $d_{ij}$ yields $(\nabla p_h\vert_{D_{ij}}\cdot\vec{n}_{ij})\vert_{F_{ij}}=c_{ij}$. Thus, from now on,  $\vec{n}_{ij}$ or $\nabla p_h\cdot\vec{n}_{ij}$ appearing in a volume integral on the diamond $D_{ij}$, are meant as constant extensions over $D_{ij}$.

In view of the above discussion, the following equalities, that will be repeatedly employed in the sequel of the paper, hold:
\begin{equation*}
\begin{aligned}
\int_{\face_{ij}}\nabla p_h\cdot\vec{n}_{ij}\ds =&
\abs{\face_{ij}}\nabla p_h\cdot\vec{n}_{ij}
= \cfrac{\abs{\face_{ij}}}{\abs{\dmond_{ij}}}\int_{\dmond_{ij}}
\nabla p_h\cdot\vec{n}_{ij} \dx \\
= &\cfrac{1}{\abs{\dmond_{ij}}}\int_{\face_{ij}}\int_{\dmond_{ij}}
\nabla p_h\cdot\vec{n}_{ij} \dx\ds.
\end{aligned}
\end{equation*}

\end{property}

In the sequel we will work under the following assumption on the regularity of the computational mesh.
\begin{assumption}[Mesh regularity] \label{ass:mesh_regularity}
Let $\B_h$ be the Voronoi-type dual mesh of a Delaunay triangulation $\Tau_h$ such that
\begin{equation*}
\exists \delta > 0: h_m = \delta h,\qquad h_m= \min_{\triangle \in \Tau_h} h_\triangle.
\end{equation*}
Moreover, we assume that mesh size does not change too much between neighbouring boxes. Hence $\forall \bx_i \in \B_h$ and $\forall \face_{ij} \in \F_h$, for some $\triangle: \triangle\cap \bx_i \neq \emptyset$,
\begin{equation*}
d_{ij}\simeq h_\triangle,\quad \abs{\face_{ij}}\simeq h_\triangle^{d-1}, \quad
\abs{\bx_i} \simeq h_\triangle^d,\quad.
\end{equation*}
Moreover, $\exists C_1,C_2 >0:\, C_1h\le h_\triangle \le C_2 h, \forall \triangle \in \Tau_h$.
\end{assumption}

\begin{remark}
The primal mesh $\Tau_h$ is assumed to be a Delaunay triangulation, i.e. no vertex of the triangulation is inside the circumcircle of any triangle of $\Tau_h$. Under Assumption \ref{ass:mesh_regularity}, the dual mesh, defined as above, will be a Voronoi-type dual mesh and so it will be orthogonal, i.e. segments connecting two barycentres of adjacent boxes are parallel to the unit normal vector of the face between them. Under these assumptions, the Box method and the classical FVM will present similar features.
\end{remark}
\medskip

On $\B_h$ we introduce the space of piecewise constant functions.
\begin{equation*}
\begin{aligned}
\W_h &= \left\lbrace
    w_h \in L^2(\domain):
    w_h\vert_{\bx_i} \in \polyspace^0(\bx_i), \,\forall \bx_i \in \B_h
\right\rbrace, \\
\end{aligned}
\end{equation*}
where the relation between the trial and test spaces is defined using the following lumping map: let $v_h \in \V_h,$
\begin{equation}
\PtoBox:\V_h \rightarrow \W_h: \quad
v_h = \sum_{\bx_i \in \B_h}
v_h(\vertex_i)\varphi_i
\mapsto \PtoBox v_h = \sum_{\bx_i \in \B_h}
v_h(\vertex_i) \chi_i,
\label{eq:lumping_map}
\end{equation}
where $\varphi_i$ and $\chi_i$ are the piecewise linear shape functions and the characteristic functions of the boxes, respectively. Moreover, for the lumping map, we define the following notation, which will be intensively used in the next sections,
\begin{equation}
\PtoBox\phi_i=\PtoBox\phi_h\vert_{\bx_i},\quad \forall \phi_h\in{\V_h}.
\label{eq:notation_ptobox}
\end{equation}

We define a discrete $H^1$-norm that uses the normal gradient to each face of the box mesh (c.f. proof \ref{proof:star_norm}), namely
\begin{equation}
\seminorm{q}_* =\left(
\sum_{\face_{ij} \in \F_h}d_{ij}\int_{\face_{ij}}
\abs{\pdd{q}{}{\vec{n}_{ij}}}^2\ds
\right)^{\frac{1}{2}}, \qquad q \in H^1(\domain).
\label{eq:starnorm}
\end{equation}

\begin{proposition}[$*$-norm properties] \label{prop:starnorm_properties}
The following properties hold: $\forall q_h \in \V_h$,
\begin{equation*}
\begin{aligned}
\norm{\PtoBox q_h}_{L^2} \leq& C \seminorm{q_h}_*, \\
\seminorm{q_h}_*\leq& C \seminorm{q_h}_{H^1}, \\
\seminorm{q_h}_{H^1} \leq& C \seminorm{q_h}_*, \\
\norm{q_h}_{L^2} \leq& \seminorm{q_h}_*, \\
\seminorm{q_h}_* \leq& 2h_m^{-1}\norm{\PtoBox q_h}_{L^2}. \\
\end{aligned}
\end{equation*}
\end{proposition}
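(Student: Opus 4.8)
The plan is to reduce all five estimates to a single finite-difference (graph) representation of $\seminorm{\cdot}_*$ and then dispatch them by elementary algebra, an inverse estimate, a stable gradient reconstruction, and a discrete Poincaré inequality. Writing $q_i \coloneq q_h(\vertex_i)$, Property \ref{property:constant_on_diamond} tells us that $\pdd{q_h}{}{\vec{n}_{ij}}$ is constant on each face $\face_{ij}$, equal to some $c_{ij}$. Since the dual mesh is orthogonal (Voronoi dual of a Delaunay triangulation, see the Remark), $\vec{n}_{ij}$ is aligned with the primal edge $\vertex_i\vertex_j$, and because $q_h$ is continuous and affine along that edge one gets the two-point consistency $c_{ij}=(q_j-q_i)/d_{ij}$. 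Using $\abs{\dmond_{ij}}=\abs{\face_{ij}}d_{ij}/d$ this yields the working identity
\[
\seminorm{q_h}_*^2 = \sum_{\face_{ij}\in\F_h} \frac{\abs{\face_{ij}}}{d_{ij}}\,(q_j-q_i)^2 ,
\]
alongside $\norm{\PtoBox q_h}_{L^2}^2=\sum_i q_i^2\,\abs{\bx_i}$ and the mass-lumping equivalence $\norm{\PtoBox q_h}_{L^2}\simeq\norm{q_h}_{L^2}$ for piecewise linear $q_h$.

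For the second estimate I would argue directly on each diamond: fixing $\triangle\in\Tau_h^{ij}$, Cauchy–Schwarz gives $c_{ij}^2=(\nabla q_h\vert_\triangle\cdot\vec{n}_{ij})^2\le\abs{\nabla q_h\vert_\triangle}^2$, and $d_{ij}\abs{\face_{ij}}=d\abs{\dmond_{ij}}\lesssim\abs{\triangle}$ by Assumption \ref{ass:mesh_regularity}; summing over faces with bounded per-element overlap produces $\seminorm{q_h}_*\le C\seminorm{q_h}_{H^1}$. For the fifth (inverse) estimate I would use $(q_j-q_i)^2\le 2(q_i^2+q_j^2)$ in the identity above, regroup the sum box-by-box, bound $\abs{\face_{ij}}/d_{ij}\lesssim h_m^{-2}\abs{\bx_i}$ and invoke the uniform face count $N_\bx$; taking square roots gives $\seminorm{q_h}_*\le 2h_m^{-1}\norm{\PtoBox q_h}_{L^2}$.

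The third estimate is the genuine difficulty, since it requires recovering the \emph{full} elementwise gradient from only its normal traces across the dual faces. Fixing $\triangle$, each edge $\vertex_i\vertex_j$ is parallel to the corresponding $\vec{n}_{ij}$, so $c_{ij}$ is exactly the directional derivative of the constant vector $\nabla q_h\vert_\triangle$ along that edge. Because $\Tau_h$ is shape-regular, the edge directions of $\triangle$ span $\Real^d$ with a condition number controlled by the minimal angle, whence the stable reconstruction
\[
\abs{\nabla q_h\vert_\triangle}^2 \le C\sum_{e\subset\triangle}\bigl(\nabla q_h\vert_\triangle\cdot\vec{t}_e\bigr)^2 = C\sum_{e\subset\triangle} c_{ij(e)}^2 .
\]
Multiplying by $\abs{\triangle}\simeq d_{ij}\abs{\face_{ij}}$ and summing then gives $\seminorm{q_h}_{H^1}\le C\seminorm{q_h}_*$. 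Quantifying this coercivity of the map $\nabla q_h\vert_\triangle \mapsto$ (edge/normal derivatives) purely in terms of shape regularity is the step I expect to be the main obstacle.

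Finally, the first and fourth estimates are norm rather than seminorm bounds, so they can hold only on the zero-mean subspace, which is exactly where pressures live ($\Q=L^2_0(\domain)$). Here I would invoke a discrete Poincaré–Friedrichs inequality for the graph form, namely $\norm{q_h}_{L^2}^2\lesssim\sum_{\face_{ij}}\frac{\abs{\face_{ij}}}{d_{ij}}(q_j-q_i)^2$ for mean-zero $q_h$, which is classical for two-point finite-volume schemes on admissible orthogonal meshes; the fourth estimate is this bound with the stated constant, and the first then follows by combining it with the mass-lumping equivalence $\norm{\PtoBox q_h}_{L^2}\le C\norm{q_h}_{L^2}$.
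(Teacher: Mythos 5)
Your working identity $\seminorm{q_h}_*^2=\sum_{\face_{ij}\in\F_h}\frac{\abs{\face_{ij}}}{d_{ij}}\,(q_j-q_i)^2$ is the same one the paper relies on, and your arguments for the second and fifth bounds (diamond identity plus normal-versus-full gradient; elementary algebra plus $d_{ij}\geq h_m$) are essentially the paper's own (proof \ref{proof:star_norm}), so I comment only on the remaining three.

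For the third bound you take a genuinely different route. The paper never reconstructs $\nabla q_h\vert_\triangle$ from its edge-directional components: using the Green-type identity of Lemma \ref{lemma:bilforms_equivalence} and Proposition \ref{prop:sumconversion} it writes
\begin{equation*}
\seminorm{q_h}_{H^1}^2=\sum_{\face_{ij}\in\F_h}\int_{\face_{ij}}\nabla q_h\cdot\vec{n}_{ij}\,\jump{\PtoBox q_h}_{ij}\ds,
\end{equation*}
then applies the Cauchy--Schwarz inequality face by face (converting face integrals of the constant normal derivative into diamond integrals, the diamonds tiling $\domain$) to obtain $\seminorm{q_h}_{H^1}^2\leq\seminorm{q_h}_{H^1}\seminorm{q_h}_*$, and finally divides by $\seminorm{q_h}_{H^1}$. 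This sidesteps exactly the step you flag as your main obstacle: no conditioning estimate for the system of edge directions is needed, and no shape-regularity constant enters. Your reconstruction argument is nevertheless sound --- for shape-regular simplices the edge directions span $\Real^d$ with minimal-angle-controlled conditioning, and $\abs{\triangle}\simeq d_{ij}\abs{\face_{ij}}$ by Assumption \ref{ass:mesh_regularity} --- but it costs an extra lemma and a mesh-dependent constant where the paper gets the bound with constant one.

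For the first and fourth bounds there is a genuine gap: you prove a different, weaker statement than the one asserted. The proposition claims the inequalities for every $q_h\in\V_h$, and the paper's proof carries no mean-zero restriction: the first bound is imported verbatim as the discrete Poincar\'e inequality of \cite{COUDIERE1999} (Lemma 5.1 there), and the fourth is then derived by applying the elementwise Poincar\'e inequality (Lemma \ref{lemma:poincare_on_smooth}) on each triangle and absorbing the two resulting terms via the third and first bounds. The discrete Poincar\'e inequality in play is of Dirichlet type, not of mean-zero type: $\B_h$, $\F_h$ and $\PtoBox$ are built over interior vertices only, so the boundary strip acts as a zero ghost value; moreover the paper sets $\Q=L^2(\domain)$, not $L^2_0(\domain)$, so the mean-zero subspace is not available to you as "where pressures live". (Your observation that constants are problematic under the literal reading of \eqref{eq:starnorm} is a real issue, but the repair consistent with the paper and with \cite{COUDIERE1999} is the boundary-penalized one, not the mean-zero one.) There is also an internal flaw in your chain: the classical graph-form discrete Poincar\'e inequality controls the lumped norm $\norm{\PtoBox q_h}_{L^2}=(\sum_i q_i^2\abs{\bx_i})^{1/2}$, not $\norm{q_h}_{L^2}$, and the two-sided "mass-lumping equivalence" you invoke to pass between them fails in the direction you need: $\norm{q_h}_{L^2}\lesssim\norm{\PtoBox q_h}_{L^2}$ is false here, since $\PtoBox$ discards all boundary-vertex values (a hat function at a boundary vertex has $\PtoBox q_h=0$ but $q_h\neq0$). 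The correct transfer is the paper's: $\norm{q_h}_{L^2}\lesssim h\seminorm{q_h}_{H^1}+\norm{\PtoBox q_h}_{L^2}$ by the elementwise Poincar\'e inequality, after which the third and first bounds conclude.
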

In particular, the $*$-norm is equivalent to the $H^1$-seminorm on $\V_h$.

Moreover we define the following mesh dependent norm:
\begin{equation}
\trinorm{\vec{v}_h, q_h}_{box} = \left(
\seminorm{\vec{v}_h}_{H^1}^2 +
\norm{\PtoBox q_h}_{L^2}^2 + \seminorm{q_h}_{\triangle,*}^2
\right)^\frac{1}{2},
\label{eq_norm}
\end{equation}
$\forall (\vec{v}_h,q_h)\in \bs{\V}_h \times \V_h$, where, for $q \in H^1(\domain)$,
\begin{equation}
\seminorm{q}_{\triangle,*} = \left(
\sum_{\face_{ij} \in \F_h}h_\triangle^3d_{ij}\int_{\face_{ij}}
\abs{\pdd{q}{}{\vec{n}_{ij}}}^2\ds
\right)^{\frac{1}{2}},
\label{eq:starnorm_mesh_dep}
\end{equation}
is a variant of the $*$-norm defined in Proposition \ref{eq:starnorm}.

Employing piecewise constant functions on $\B_h$, we introduce the following {\it Auxiliary Box Method} for problem \eqref{eq:stokes}: find $(\vec{u}_A, p_A) \in \bs{\V}_{h,\vec{g}_h} \times \V_h$, such that
\begin{equation}
\begin{aligned}
a_B(\vec{u}_A,\PtoBox\vec{v}_h) + b_B(\PtoBox\vec{v}_h,p_A)
&= (\vec{f},\PtoBox\vec{v}_h)_\domain,
&&\quad \forall \vec{v}_h\in \bs{\V}_h, \\
c_B(\vec{u}_A,\PtoBox q_h) + s_B(p_A,\PtoBox q_h)
&= 0,
&&\quad \forall q_h \in \V_h
\end{aligned}
\label{eq:stokes:continuous_box}
\end{equation}
where
\begin{equation}
\begin{aligned}
a_B:\bs{\V}_h\times\bs{\W}_h \rightarrow \mathbb{R}: \quad
& a_B(\vec{v}_h,\vec{w}_h)=
-\sum_{\bx_i \in \B_h}
\int_{\partial \bx_i}
\nu\pdd{\vec{v}_h}{}{\vec{n}_i} \cdot \vec{w}_h \ds, \\
b_B:\bs{\W}_h\times\V_h \rightarrow \mathbb{R}: \quad
& b_B(\vec{w}_h,q_h)=
\sum_{\bx_i \in \B_h}
\int_{\partial \bx_i}
q_h\vec{n}_i\cdot \vec{w}_h\ds, \\
c_B:\bs{\V}_h\times\W_h \rightarrow \mathbb{R}: \quad
& c_B(\vec{v}_h, w_h)=
\sum_{\bx_i \in \B_h}
\int_{\partial \bx_i}
\vec{n}_i \cdot \vec{v}_h w_h \ds, \\
s_B:\V_h\times\W_h \rightarrow \mathbb{R}, \quad &
\end{aligned}
\label{eq:stokes:bilforms}
\end{equation}
where $s_B$ is a suitable stabilization term (that will be defined in section \ref{sec:rhiechow}). Let us define the compact form
\begin{equation}
\C_B((\vec{u}_A,p_A),(\PtoBox \vec{v}_h,\PtoBox q_h)) =
a_B(\vec{u}_A,\PtoBox \vec{v}_h) +
b_B(\PtoBox\vec{v}_h,p_A) +
c_B(\vec{u}_A,\PtoBox q_h) +
s_B(p_A,\PtoBox q_h).
\label{eq:quadriform_cont}
\end{equation}
Then, the above problem can be written as
\begin{equation}
\C_B((\vec{u}_A,p_A),(\PtoBox \vec{v}_h,\PtoBox q_h))
= (\vec{f},\PtoBox \vec{v}_h)_\domain,
\end{equation}
$\forall (\vec{v}_h,q_h)\in\bs{\V}_h\times\V_h$.

\begin{remark}
Let $a$ and $b$ be the bilinear forms of the piecewise linear FEM formulation:
\begin{equation}
\begin{aligned}
a(\vec{u}_h,\vec{v}_h) + b(\vec{v}_h,p_h)
&= (\vec{f},\vec{v}_h)_\domain, &&\quad \forall \vec{v}_h\in \bs{\V}_{h,0}, \\
-b(\vec{u}_h,q_h)
+ s(p_h,q_h)
&= 0, &&\quad \forall q_h \in \V_h,
\end{aligned}
\label{eq:bilforms_fem}
\end{equation}
where $s$ is an opportune stabilization term (e.g. Brezzi-Pitk{\"a}ranta \cite{BREZZI1984} or Interior Penalty \cite{BURMAN2006}). Then, it can be proven \cite{QUARTERONI2011} that the BM bilinear forms introduced in equation \eqref{eq:stokes:bilforms} have a strict relationship with the FEM ones, namely:
\begin{equation}
\begin{aligned}
a_B(\vec{u}_h,\PtoBox\vec{v}_h) =& a(\vec{u}_h,\vec{v}_h), \\
b_B(\PtoBox\vec{v}_h,p_h) =& b(\vec{v}_h,p_h), \\
c_B(\vec{u}_h,\PtoBox q_h) =& -b(\vec{u}_h,q_h).
\end{aligned}
\label{eq:bilforms_equivalence}
\end{equation}
Moreover, due to this fact, many properties such as coercivity and consistency are preserved between the two methods. For the proof of equalities \eqref{eq:bilforms_equivalence}, we refer to Lemma \ref{lemma:bilforms_equivalence}.
\end{remark}
\medskip

We then introduce the following discrete bilinear forms:
\begin{subequations}
\begin{align}
\tilde{a}_B(\vec{v}_h,\vec{w}_h)
\coloneq&-\sum_{\bx_i \in \B_h}\sum_{\bx_j\in G_i} \nu\cfrac{\abs{\face_{ij}}}{d_{ij}}
\left[\PtoBox \vec{v}_j - \PtoBox\vec{v}_i\right]\cdot \vec{w}_h,
\label{eq:stokes:bilforms:discrete:a}\\
\tilde{b}_B(\vec{w}_h,q_h)
\coloneq&\sum_{\bx_i \in \B_h}\sum_{\bx_j\in G_i}
\abs{\face_{ij}}
\left[w_{ij}\PtoBox q_i + (1-w_{ij})\PtoBox q_j\right]
\vec{n}_{ij}\cdot \vec{w}_h, \\
\tilde{c}_B(\vec{v}_h, w_h) \coloneq&
\sum_{\bx_i \in \B_h}\sum_{\bx_j\in G_i} \abs{\face_{ij}}
\left[w_{ij}\PtoBox \vec{v}_i + (1-w_{ij})\PtoBox \vec{v}_j\right]\cdot
\vec{n}_{ij} w_h.
\end{align}
\label{eq:stokes:bilforms:discrete}
\end{subequations}

Here, the Laplacian operator is discretized using a finite difference between barycentres of two adjacent boxes, while the gradient and the divergence are discretized using a linear interpolation between the same two values using as weights the distances of the barycentres with respect to the face centre. By the fact that the discrete functions are piecewise linear on the primal mesh and given the orthogonality of the dual mesh, we have $a_B=\tilde{a}_B$. On the other hand, the bilinear forms $\tilde{b}_B$ and $\tilde{c}_B$ are not exactly equal to $b_B$ and $c_B$, respectively, and this will be taken into account in the analysis.

Finally, adopting the discrete bilinear forms \eqref{eq:stokes:bilforms:discrete} in the Auxiliary Box Method \eqref{eq:stokes:continuous_box}, we obtain the following scheme, that, from now on, we refer as to the {\it Box Method}: find $(\vec{u}_B, p_B) \in \bs{\V}_{h,\vec{g}_h} \times \V_h$, such that
\begin{equation}
\begin{aligned}
a_B(\vec{u}_B,\PtoBox\vec{v}_h) + \tilde{b}_B(\PtoBox\vec{v}_h,p_B)
&= (\vec{f},\PtoBox\vec{v}_h)_\domain,
&&\quad \forall \vec{v}_h\in \bs{\V}_h, \\
\tilde{c}_B(\vec{u}_B,\PtoBox q_h) + \tilde{s}_B(p_B,\PtoBox q_h)
&= 0,
&&\quad \forall q_h \in \V_h,
\end{aligned}
\label{eq:stokes:box}
\end{equation}
where we employed an approximate form of the stabilization term $\tilde{s}_B$.
Introducing the compact form
\begin{equation}
\widetilde{\C}_B((\vec{u}_B,p_B),(\PtoBox \vec{v}_h,\PtoBox q_h)) =
a_B(\vec{u}_B,\PtoBox \vec{v}_h) +
\tilde{b}_B(\PtoBox\vec{v}_h,p_B) +
\tilde{c}_B(\vec{u}_B,\PtoBox q_h) +
\tilde{s}_B(p_B,\PtoBox q_h),
\label{eq:quadriform}
\end{equation}
the problem above can be rewritten as
\begin{equation}
\widetilde{\C}_B((\vec{u}_B,p_B),(\PtoBox \vec{v}_h,\PtoBox q_h)) =
(\vec{f},\PtoBox \vec{v}_h)
\end{equation}
$\forall (\vec{v}_h,q_h)\in\bs{\V}_h\times\V_h$.

\section{The Rhie-Chow stabilization} \label{sec:rhiechow}
In this section we introduce the Rhie-Chow (RC) stabilization to be employed in the stabilized Box Method  \eqref{eq:stokes:box} giving rise to the Rhie-Chow stabilized Box Method (RCBM). Moreover, we study crucial properties of the RC stabilization (consistency, continuity, coercivity, \textit{inf-sup} stability) that will be important for the well-posedeness and  convergence of RCBM.

Let $N$ be the number of vertices $\vertex$, $N_u=d N$ the number of d.o.f of velocity and $N_p=N$ the number of d.o.f. of pressure,
\begin{equation}
\begin{aligned}
\bs{\V}_h: \span\left\lbrace \bs{\phi}_j \right\rbrace_{j=1}^{N_u},
\qquad \V_h: \span\left\lbrace \phi_j \right\rbrace_{j=1}^{N_p}.
\end{aligned}
\label{eq:discrete_spaces_basis}
\end{equation}
For space $\bs{\W}_h,\W_h$ the basis are the projections on boxes of the ones of $\bs{\V}_h,\V_h$.

Then, we write $\vec{u}_B$ and $p_B$ as linear combinations of the basis functions:
\begin{equation}
\vec{u}_B(\vec{x}) = \sum_{j=1}^{N_u} u_j \bs{\phi}_j(\vec{x}) ,\qquad
p_B(\vec{x}) = \sum_{j=1}^{N_p} p_j \phi_j(\vec{x}).
\label{eq:solution_linear_combination}
\end{equation}

To find the expressions of the matrices of the problem, A, B and $\mat{B}^\intercal$, we plug the expressions for $\vec{u}_B$ and $p_B$ into the bilinear forms setting, $\vec{w}_h=\PtoBox\bs{\phi}_i$ and $w_h=\PtoBox\phi_i$, where the operator $\PtoBox$ acts component-wise.
\begin{subequations}\label{eq:explicit_matrices}
\begin{align}
(\text{A}\vec{u})_i&
\begin{aligned}[t]=&
\sum_j \text{A}_{ij}\vec{u}_j = \tilde{a}_B(\vec{u}_B,\vec{w}_h)
=\sum_{j=1}^{N_u} u_j \tilde{a}_B(\bs{\phi}_j, \PtoBox \bs{\phi}_i)\\
=& -\sum_{j=1}^{N_u} u_j\sum_{\bx_k\in G_j}
\nu \cfrac{\abs{\face_{jk}}}{d_{jk}}\left[\PtoBox \bs{\phi}_k - \PtoBox\bs{\phi}_j\right]
\cdot \PtoBox\bs{\phi}_{i}\\
=& u_i \sum_{\bx_j\in G_i}\nu \cfrac{\abs{\face_{ij}}}{d_{ij}}
-\sum_{\bx_j\in G_i} u_j
\nu\cfrac{\abs{\face_{ij}}}{d_{ij}},
\end{aligned}\\
(\text{B}^\intercal \vec{p})_i  &\begin{aligned}[t]
 =& \sum_j\text{B}^\intercal_{ij} \vec{p}_j =
\tilde{b}_B(\vec{w}_h,p_B)
=\sum_{j=1}^{N} p_j \tilde{b}_B(\PtoBox\bs{\phi}_i,\phi_j)\\
=&\sum_{j=1}^{N} p_j
\sum_{\bx_k\in G_j} \abs{\face_{jk}}
\left[w_{ij}\PtoBox \phi_j + (1-w_{ij})\PtoBox \phi_k\right]
\vec{n}_{jk}\cdot \bs{\phi}_i \\
=& p_i\sum_{\bx_j\in G_i} \abs{\face_{ij}}
w_{ij}\vec{n}_{ij} \cdot \vec{e}_i
+ \sum_{\bx_j\in G_i}
p_j \abs{\face_{ij}}(1-w_{ij})\vec{n}_{ij}\cdot \vec{e}_i,
\end{aligned}\\
(\text{B}\vec{u})_i  &\begin{aligned}[t]
=& \sum_j \text{B}_{ij}\vec{u}_j =
\tilde{c}_B(\vec{u}_B,w_h)
= \sum_{j=1}^{3N} u_j \tilde{c}_B(\bs{\phi}_j, \PtoBox \phi_i) \\
=&\sum_{j=1}^{3N} u_j
\sum_{\bx_k\in G_j} \abs{\face_{jk}}
\left[w_{ij}\PtoBox \bs{\phi}_j + (1-w_{ij})\PtoBox \bs{\phi}_k\right]
\cdot \vec{n}_{jk}\PtoBox\phi_i\\
=& \sum_{\substack{i=1+(l-1)N,\\\,l=1,\dots,d}}^{lN} u_i \sum_{\bx_j\in G_i}\abs{\face_{ij}}w_{ij} \vec{e}_l \cdot \vec{n}_{ij}
+ \sum_{\bx_j\in G_i}
u_j \abs{\face_{ij}}(1-w_{ij}) \vec{e}_l \cdot \vec{n}_{ij}.
\end{aligned}
\end{align}
\end{subequations}

Using equations \eqref{eq:explicit_matrices}, the algebraic linear system associated to problem \eqref{eq:stokes:box}, considering $s_B=0$, reads:
\begin{equation}
\left[
\begin{matrix} \text{A} & \text{B}^\intercal \\ \text{B} & 0 \\ \end{matrix}
\right]
\left[\begin{matrix} \vec{u} \\ \vec{p} \end{matrix} \right]
=  \left[\begin{matrix} \vec{F} \\ \vec{0} \end{matrix} \right]
\label{eq:monolithic_alge}
\end{equation}
where $\vec{F}$ is the discretization of the right-hand-side.

For the solution of system \eqref{eq:monolithic_alge} we consider the SIMPLE (Semi-Implicit Pressure Linked Equation) method \cite{PATANKAR1980}, that is a splitting algorithm for the solution of Stokes and Navier-Stokes problems. We introduce the additive splitting A = D-H, where D is the diagonal of A and -H is the off-diagonal part. Inverting the system with respect to its diagonal, we obtain the following expression for $\vec{u}$:
\begin{equation*}
\vec{u} = \text{D}^{-1}\left[
    \text{H}\vec{u} - \text{B}^\intercal\vec{p} + \vec{F}
\right] = \tilde{\vec{u}}-\text{D}^{-1}\text{B}^\intercal\vec{p},
\end{equation*}
where $\tilde{\vec{u}}=\text{D}^{-1}\left[\text{H}\vec{u} + \vec{F}\right]$, and we substitute it in the second equation of system \eqref{eq:monolithic_alge} to obtain
\begin{equation*}
\text{B}\tilde{\vec{u}}-\text{B}\text{D}^{-1}\text{B}^\intercal\vec{p}=\vec{0}.
\end{equation*}

The term $\text{B}\text{D}^{-1}\text{B}^\intercal\vec{p}$ is the algebraic counterpart of a Laplacian problem for the pressure where the diffusivity coefficient is $\text{D}^{-1}$, piecewise constant on boxes. This co-located discretization procedure is known to generate spurious pressure modes \cite{FERZIGER2020} because the diagonal dominance of the algebraic system is no more ensured, due to the too large bandwidth of the matrix. To stabilize the problem we employ the so called \textit{Rhie-Chow interpolation} \cite{FERZIGER2020, ZHANG2014, MANGANI2015}, that basically substitutes the term $\text{B}\text{D}^{-1}\text{B}^\intercal\vec{p}$ with the discretization of a pressure Laplacian, using $\text{D}^{-1}$ as a diffusion. Thus, the discrete form of Rhie-Chow stabilization reads
\begin{equation}
\begin{aligned}
\tilde{s}_B(p_B,z_h)
=& \sum_{\bx_i \in \B_h}\sum_{\bx_j\in G_i}\abs{\face_{ij}}
\Biggl[ w_{ij}\text{D}^{-1}_i \sum_{\bx_k\in G_i}
\abs{\face_{ik}}
\left(w_{ik}\PtoBox p_i+ (1-w_{ik})\PtoBox p_k \right)
\vec{n}_{ik}\\
&\qquad+ (1-w_{ij})\text{D}^{-1}_j \sum_{\bx_k\in G_j}
\abs{\face_{jk}}
\left(w_{jk}\PtoBox p_j + (1-w_{jk})\PtoBox p_k\right)\vec{n}_{jk}
\Biggr] \cdot \vec{n}_{ij} z_h\\
&- \sum_{\bx_i \in \B_h}\sum_{\bx_j\in G_i}
\abs{\face_{ij}}\left[w_{ij}\abs{\bx_i}\text{D}_i^{-1} + (1-w_{ij})\abs{\bx_j}\text{D}^{-1}_j\right]
\cfrac{\PtoBox p_j - \PtoBox p_i}{{d_{ij}}}
z_h.\\
\end{aligned}
\label{eq:rhiechow:stab}
\end{equation}

Upon introducing for any $q_h \in \V_h$ the Gauss-Green gradient $\nabla_{h,i}q_h$  of $q_h$ evaluated on box $i$:
$$\abs{\bx_i}\nabla_{h,i} q_h:= \sum_{\bx_j\in G_i} \abs{\face_{ij}}\vec{n}_{ij}
\left(w_{ij} \PtoBox q_i + (1-w_{ij})\PtoBox q_j\right)$$
we can rewrite the Rhie-Chow stabilization operator $\tilde{s}_B: \V_h \times \W_h \rightarrow \mathbb{R}$ as follows
\begin{equation}\label{def:rhie_chow}
\tilde{s}_B(q_h, z_h) = \tilde{s}_B^1(q,z_h) - \tilde{s}_B^2(q,z_h),
\end{equation}
where $\tilde{s}_B^1$ and $\tilde{s}_B^2$ can be written explicitly as
\begin{equation}
\begin{aligned}
\tilde{s}_B^1(q_h,z_h) =& \sum_{\face_{ij} \in \F_h}
\int_{\face_{ij}}
\left(w_{ij}\text{D}_i^{-1}\abs{\bx_i}\nabla_{h,i} q_h
+(1-w_{ij})\text{D}_j^{-1}\abs{\bx_i}\nabla_{h,j} p_h\right)
\cdot\vec{n}_{ij} \jump{z_h}_{ij}\ds, \\
\tilde{s}_B^2(q_h,z_h) =& \sum_{\face_{ij} \in \F_h}
\int_{\face_{ij}}
\left(w_{ij}\abs{\bx_i}\text{D}^{-1}_i
+ (1-w_{ij})\abs{\bx_j}\text{D}^{-1}_j\right)
\cfrac{\PtoBox q_j - \PtoBox q_i}{d_{ij}} \jump{z_h}_{ij}\ds,\\
\end{aligned}
\end{equation}
where $\abs{\bx_i}$ is the measure of box $\bx_i$, $\text{D}_i$ are the diagonal coefficients of matrix A. We also define the Auxiliary Rhie-Chow stabilization operator, acting on continuous functions:
\begin{equation}
\begin{aligned}
s_B: H^1(\domain) \times \W_h \rightarrow \mathbb{R}: \quad
& s_B(q, z_h) = s_B^1(q,z_h) - s_B^2(q,z_h),\\
\end{aligned}
\label{eq:stokes:rhiechow}
\end{equation}
where $s_B^1$ and $s_B^2$ can be written explicitly as
\begin{equation}\label{RC:aux}
\begin{aligned}
s_B^1(q,z_h) =& \sum_{\face_{ij} \in \F_h}
\int_{\face_{ij}}
\left(w_{ij}\text{D}_i^{-1}\int_{\bx_i}\nabla q\dx
+(1-w_{ij})\text{D}_j^{-1}\int_{\bx_j}\nabla q\dx\right)
\cdot\vec{n}_{ij} \jump{z_h}_{ij}\ds, \\
s_B^2(q,z_h) =& \sum_{\face_{ij} \in \F_h}
\int_{\face_{ij}}
\left(w_{ij}\abs{\bx_i}\text{D}^{-1}_i
+ (1-w_{ij})\abs{\bx_j}\text{D}^{-1}_j\right)
\pdd{q}{}{\vec{n}_{ij}} \jump{z_h}_{ij}\ds.\\
\end{aligned}
\end{equation}

\begin{remark} \label{rem:rhie_chow_meaning}
The two forms $s_B^1$ and $s_B^2$ have a precise meaning, in particular, the algebraic representation of $s_B^1$ is $\text{B}\text{D}^{-1}\text{B}^\intercal\vec{p}$ and $s^2$ is the scalar counterpart of bilinear form $\tilde{a}_B$ with $\text{D}^{-1}$ as a diffusion.
\end{remark}
\begin{remark} \label{rem:gauss_green_grad}
In $s_B^1$, the gradient on a box is computed using the Gauss-Green theorem. Following the discretization rules used for equations \eqref{eq:stokes:bilforms:discrete}, then for $q_h \in \V_h$ we have:
\begin{equation}
\begin{aligned}
&\abs{\bx_i}\nabla_{h,i} q_h \simeq \int_{\partial \bx_i} q_h \vec{n}_i\ds= \int_{\bx_i}  \nabla \cdot (q_h \text{I})\dx=
\int_{\bx_i}  \nabla q_h\dx,
\end{aligned}
\label{eq:gauss_green_gradient}
\end{equation}
where I is the identity matrix.
\end{remark}

\subsection{Properties of the Rhie-Chow stabilization} \label{sec:rhiechow:props}

We now discuss some important properties of the Rhie-Chow stabilization that will be employed to prove well-posedness and convergence of problem   \eqref{eq:stokes:box}. In particular we discuss
consistency, continuity, coercivity and \textit{inf-sup} stability.

\paragraph{Consistency}
The consistency of Rhie-Chow stabilization is given by:
\begin{equation}
(s_B - \tilde{s}_B)(p_B,\PtoBox q_h)\lesssim
h^{\frac{3}{2}}
\seminorm{p_B}_{H^1}\seminorm{q_h}_*
\label{eq:rhie_chow_consistency}
\end{equation}
where $p_B,q_h\in\V_h$.

To prove it, let us observe that, in view of the definitions of $s_B$  and $\tilde{s}_B$, we have
\begin{equation*}
\begin{aligned}
(s_B - \tilde{s}_B)(p_B,\PtoBox q_h)=&
(s^1_B - \tilde{s}^1_B)(p_B,\PtoBox q_h) -
(s^2_B - \tilde{s}^2_B)(p_B,\PtoBox q_h) \\
=& (I) + (II).
\end{aligned}
\end{equation*}

The second term $(II)=0$ by the fact that
\[\cfrac{\PtoBox p_j - \PtoBox p_i}{d_{ij}}=\pdd{p_B}{}{\vec{n}_{ij}}.\]
On the other hand, we can write $(I)$ as
\begin{equation}
\begin{aligned}
(I)=& \sum_{\face_{ij} \in \F_h}
\int_{\face_{ij}}
\left(w_{ij}\text{D}_i^{-1}\int_{\bx_i}(\nabla p_B - \nabla_{h,i} p_B) \dx\right.\\
& \qquad\qquad\left. +(1-w_{ij})\text{D}_j^{-1}\int_{\bx_j}(\nabla p_B - \nabla_{h,j} p_B)\dx\right)
\cdot\vec{n}_{ij} \jump{q_h}_{ij}\ds.
\end{aligned}
\end{equation}

Without loss of generality, we treat only the case of the first addendum, the second term being similar. Let us first desume that, under Assumption \ref{ass:mesh_regularity}, $\text{D}^{-1}_i$ scales as $h_\triangle^{2-d}$, indeed we have
\begin{equation}
\begin{aligned}
&\text{D}_i =
\sum_{\bx_j\in G_i}\cfrac{\abs{\face_{ij}}}{d_{ij}}
\simeq \sum_{\bx_j\in G_i}h_\triangle^{d-2}, \\
& \cfrac{h_\triangle^{2-d}}{N_{\bx_i}} \lesssim
\text{D}^{-1}_i \lesssim \cfrac{h_\triangle^{2-d}}{N_{\bx_i}}.
\end{aligned}
\label{eq:box:D_estimate}
\end{equation}
 where $N_{B_i}$ denotes the number of faces of box $\bx_i$ (c.f. Section \ref{sec:boxmethod}).

Considering the face barycentres $\faceCentre_{ij}$, since $p_B$ is piecewise linear, the following identity holds:
\begin{equation}
w_{ij}\PtoBox p_i + (1-w_{ij})\PtoBox p_j = p_B(\faceCentre_{ij}),
\label{eq:pl_equality_on_face_centre_0}
\end{equation}
where we employed notation \eqref{eq:notation_ptobox}. Let now $\vec{x}$ be a point on the face $\face_{ij}$. As $p_B$ is piecewise linear, using a Taylor expansion around $\faceCentre_{ij}$ we have:
\begin{equation}
\begin{aligned}
p_B(\vec{x}) =& p_B(\faceCentre_{ij}) + \sum_{\triangle \in \Tau_h: \triangle\cap\dmond_{ij}\neq\emptyset}
(\vec{x} - \faceCentre_{ij})\cdot\nabla p_B
\indicator{\triangle\cap\dmond_{ij}}, \\
\end{aligned}
\label{eq:pl_taylor_exp_on_dmond_0}
\end{equation}
where $\indicator{}$ is the indicator function and $\nabla p_B$ is piecewise constant on each intersection between triangle $\triangle$ and diamond $\dmond_{ij}$ (c.f. Figure \ref{fig:scheme_dual_geom}).

Using equations \eqref{eq:pl_equality_on_face_centre_0} and \eqref{eq:pl_taylor_exp_on_dmond_0}, the Cauchy-Schwarz inequality, Assumption \ref{ass:mesh_regularity}, inequality \eqref{eq:box:D_estimate}, the Inverse Trace inequality (c.f. Lemma \ref{lemma:inverse_trace}) and the H\"older inequality, we get
\begin{equation*}
\begin{aligned}
\sum_{\face_{ij} \in \F_h} \int_{\face_{ij}}&
w_{ij}\text{D}^{-1}_i
\left(\int_{\bx_i}\nabla p_B - \abs{\bx_i}\nabla_{h,i} p_B)\right)
\cdot\vec{n}_{ij} \jump{\PtoBox q_h}_{ij}\ds \\
=& \sum_{\face_{ij} \in \F_h} \int_{\face_{ij}}
w_{ij}\text{D}^{-1}_i \left(
\sum_{\bx_j\in G_i} \int_{\face_{ij}}\vec{n}_{ij}
\left(p_B - w_{ij} \PtoBox p_i - (1-w_{ij})\PtoBox p_j\right)\ds\right)
\cdot\vec{n}_{ij} \jump{\PtoBox q_h}_{ij}\ds \\
=& \sum_{\face_{ij} \in \F_h} \int_{\face_{ij}}
w_{ij}\text{D}^{-1}_i \left(
\sum_{\bx_j\in G_i} \int_{\face_{ij}}\vec{n}_{ij}
(\vec{x}-\faceCentre_{ij})\cdot\nabla p_B\ds
\right)
\cdot\vec{n}_{ij} \jump{\PtoBox q_h}_{ij}\ds \\
\lesssim& \sum_{\face_{ij} \in \F_h}
\left(
h_\triangle^{d-1} \cfrac{h_\triangle^{4 - 2d}}{N_{\bx_i}} \left(
\sum_{\bx_j\in G_i} \int_{\face_{ij}}\vec{n}_{ij}
(\vec{x}-\faceCentre_{ij})\cdot\nabla p_B\ds
\right)^2 \right)^{\frac{1}{2}}
\left(\int_{\face_{ij}}\jump{\PtoBox q_h}_{ij}^2\ds\right)^{\frac{1}{2}} \\
\leq& \sum_{\face_{ij} \in \F_h}
\left( \cfrac{h_\triangle^{3-d}}{N_{\bx_i}}
\left(\sum_{\bx_j\in G_i} \int_{\face_{ij}} \abs{\vec{n}_{ij}}^2\ds\right)
\left(\sum_{\bx_j\in G_i}\int_{\face_{ij}}
\abs{\vec{x}-\faceCentre_{ij}}^2\abs{\nabla p_B}^2\ds
\right)
\right)^{\frac{1}{2}}
\left(\int_{\face_{ij}}\jump{\PtoBox q_h}_{ij}^2\ds\right)^{\frac{1}{2}} \\
\leq& \sum_{\face_{ij} \in \F_h}
\left( N_{\bx_i}\cfrac{h_\triangle^{3-d}}{N_{\bx_i}}h_\triangle^{d-1}h_\triangle^2
\left(\sum_{\bx_j\in G_i}\norm{\nabla p_B}^2_{L^2(\face_{ij})}
\right)
\right)^{\frac{1}{2}}
\left(\int_{\face_{ij}}\jump{\PtoBox q_h}_{ij}^2\ds\right)^{\frac{1}{2}} \\
\leq& \sum_{\face_{ij} \in \F_h}
\left( C^2_{inv}N_{\bx_i}h_\triangle^{4}h_\triangle^{-2}
\norm{\nabla p_B}^2_{L^2(\bx_i)}
\right)^{\frac{1}{2}}
h_\triangle^{\frac{1}{2}}\left(d_{ij}\int_{\face_{ij}}
\abs{\pdd{q_h}{}{\vec{n}_{ij}}}^2\ds\right)^{\frac{1}{2}} \\
\leq& \sum_{\face_{ij} \in \F_h} C_{inv}
\sqrt{N_{\bx_i}}h_\triangle^{\frac{3}{2}}
\left( \norm{\nabla p_B}^2_{L^2(\bx_i)} \right)^{\frac{1}{2}}
\left(d_{ij}\int_{\face_{ij}}
\abs{\pdd{q_h}{}{\vec{n}_{ij}}}^2\ds\right)^{\frac{1}{2}} \\
\leq& C_{inv}\sqrt{N_\bx} h^{\frac{3}{2}}
\left( \sum_{\face_{ij} \in \F_h}
\norm{\nabla p_B}^2_{L^2(\bx_i)} \right)^{\frac{1}{2}}
\left(\sum_{\face_{ij} \in \F_h}d_{ij}\int_{\face_{ij}}
\abs{\pdd{q_h}{}{\vec{n}_{ij}}}^2\ds\right)^{\frac{1}{2}} \\
\lesssim& C_{inv}\sqrt{N_\bx} h^{\frac{3}{2}}
\seminorm{p_B}_{H^1}\seminorm{q_h}_*.
\end{aligned}
\end{equation*}

\paragraph{Continuity}
In the following, we discuss the continuity of the Auxiliary Rhie-Chow operator \eqref{eq:stokes:rhiechow}
and the continuity of the Rhie-Chow operator \eqref{def:rhie_chow}.

For what concerns the continuity of the Auxiliary Rhie-Chow operator, we distinguish two cases, depending on the first entry of the bilinear form $s_B(\cdot,\cdot)$: (a) infinite dimensional, (b) finite dimensional.
Let us first consider the infinite dimensional case, namely we aim at proving the following
\begin{equation}
s_B(p,\PtoBox q_h)\lesssim h^{\frac{3}{2}}\seminorm{\nabla p}_{h,1}\seminorm{q_h}_{\triangle,*}
\label{eq:rhie_chow_continuity_cont}
\end{equation}
where $p \in \Q \cap H^2_{\text{loc}}(\domain)$ and $q_h\in\V_h$.

To prove it, using equations \eqref{eq:stokes:rhiechow} and \eqref{RC:aux}, we have
\begin{equation*}
\begin{aligned}
s_B(p,\PtoBox q_h)= & \sum_{\face_{ij} \in \F_h} \int_{\face_{ij}}
\left(w_{ij}\text{D}^{-1}_i \int_{\bx_i} \nabla p\dx
+ (1-w_{ij})\text{D}^{-1}_j\int_{\bx_j}\nabla p\dx\right)
\cdot\vec{n}_{ij} \jump{\PtoBox q_h}_{ij}\ds \\
&- \sum_{\face_{ij} \in \F_h} \int_{\face_{ij}}
\left(w_{ij}\abs{\bx_i}\text{D}^{-1}_i
+ (1-w_{ij})\abs{\bx_j}\text{D}^{-1}_j\right)
\pdd{p}{}{\vec{n}_{ij}}
\jump{\PtoBox q_h}_{ij}\ds \\
=& \sum_{\face_{ij} \in \F_h} \int_{\face_{ij}}
w_{ij}\text{D}^{-1}_i
\left(\int_{\bx_i} \nabla p\dx - \abs{\bx_i}\nabla p\right)
\cdot\vec{n}_{ij} \jump{\PtoBox q_h}_{ij}\ds \\
&+\sum_{\face_{ij} \in \F_h} \int_{\face_{ij}}(1-w_{ij})\text{D}^{-1}_j
\left(\int_{\bx_j}\nabla p\dx - \abs{\bx_j}\nabla p\right)
\cdot\vec{n}_{ij} \jump{\PtoBox q_h}_{ij}\ds. \\
\end{aligned}
\end{equation*}

Without loss of generality, we treat only the case of the first addendum, the second term being similar. Using the Cauchy-Schwarz inequality, the Inverse trace inequality (c.f. Lemma \ref{lemma:inverse_trace}), Assumption \ref{ass:mesh_regularity}, the Poincar\`e inequality (c.f. Lemma \ref{lemma:poincare_on_smooth}), the H\"older inequality and standard interpolation error estimates, we get
\begin{equation*}
\begin{aligned}
\sum_{\face_{ij} \in \F_h} \int_{\face_{ij}}&
\cfrac{1}{2}\text{D}^{-1}_i
\left(\int_{\bx_i} \nabla p\dx - \abs{\bx_i}\nabla p\right)
\cdot\vec{n}_{ij} \jump{\PtoBox q_h}_{ij}\ds \\
\leq& \sum_{\face_{ij} \in \F_h} \left(\int_{\face_{ij}}
\cfrac{1}{4}\text{D}^{-2}_i
\left(\int_{\bx_i} \nabla p\dx
- \abs{\bx_i}\nabla p\right)^2\ds\right)^{\frac{1}{2}} \left(\int_{\face_{ij}}d_{ij}^2
\cfrac{\jump{\PtoBox q_h}_{ij}^2}{d_{ij}^2}\ds\right)^{\frac{1}{2}} \\
\leq& \sum_{\face_{ij} \in \F_h}
\text{D}^{-1}_i \abs{\bx_i}
\norm{\cfrac{1}{\abs{\bx_i}}\int_{\bx_i} \nabla p\dx
- \nabla p}_{L^2(\face_{ij})}
\sqrt{d_{ij}}\left(d_{ij} \int_{\face_{ij}}
\cfrac{\jump{\PtoBox q_h}_{ij}^2}{d_{ij}^2}\ds\right)^{\frac{1}{2}} \\
\leq& \begin{aligned}[t]
\sum_{\face_{ij} \in \F_h}&
h_\triangle^\frac{1}{2}\text{D}^{-1}_i \abs{\bx_i}\\
&C_{\text{inv}}\left[h_\triangle^{-1}
\norm{\cfrac{1}{\abs{\bx_i}}\int_{\bx_i} \nabla p\dx
- \nabla p}_{L^2(\bx_i)}
+ h_\triangle \seminorm{\cfrac{1}{\abs{\bx_i}}\int_{\bx_i} \nabla p\dx
- \nabla p}_{H^1(\bx_i)}
\right]\\
&\left( d_{ij} \int_{\face_{ij}}
\cfrac{\jump{\PtoBox q_h}_{ij}^2}{d_{ij}^2}\ds\right)^{\frac{1}{2}}
\end{aligned} \\
\leq& \sum_{\face_{ij} \in \F_h}C_{\text{inv}}
h_\triangle^\frac{1}{2}\text{D}^{-1}_i \abs{\bx_i}
\left[C_dh_\triangle^{-1}h_\triangle\seminorm{\nabla p}_{H^1(\bx_i)}
+ h_\triangle \seminorm{\nabla p}_{H^1(\bx_i)}
\right]
\left( d_{ij} \int_{\face_{ij}}
\cfrac{\jump{\PtoBox q_h}_{ij}^2}{d_{ij}^2}\ds\right)^{\frac{1}{2}} \\
\leq& \sum_{\face_{ij} \in \F_h}C_{\text{inv}}
C_1(C_d + h_\triangle)h_\triangle^\frac{1}{2}h_\triangle^{2-d} h_\triangle^d
\seminorm{\nabla p}_{H^1(\bx_i)}
\left( d_{ij} \int_{\face_{ij}}
\cfrac{\jump{\PtoBox q_h}_{ij}^2}{d_{ij}^2}\ds\right)^{\frac{1}{2}} \\
\leq&C_{\text{inv}}
C_1C_dh^\frac{3}{2}
\left(\sum_{\face_{ij} \in \F_h}
\seminorm{\nabla p}_{H^1(\bx_i)}^2\right)^{\frac{1}{2}}
\left(\sum_{\face_{ij}\in\F_h}h_\triangle^2
d_{ij} \int_{\face_{ij}}
\cfrac{\jump{\PtoBox q_h}_{ij}^2}{d_{ij}^2}\ds\right)^{\frac{1}{2}} \\
\leq&{C_{\text{inv}}
C_1C_dh^\frac{3}{2}
\left(\sum_{\face_{ij}\in\F_h}\sum_{\triangle\in\Tau_h:\triangle\cap \bx_i \neq\emptyset}
\seminorm{\nabla p}_{H^1(\triangle)}^2\right)^{\frac{1}{2}}
\seminorm{q_h}_{\triangle,*}} \\
\leq&{C_{\text{inv}}
C_1C_dh^\frac{3}{2}
\left(N_{\bx_i}\sum_{\bx_i\in \B_h}\sum_{\triangle\in\Tau_h:\triangle\cap \bx_i \neq\emptyset}
\seminorm{\nabla p}_{H^1(\triangle)}^2\right)^{\frac{1}{2}}
\seminorm{q_h}_{\triangle,*}} \\
\leq&{C_{\text{inv}}
C_1C_d\sqrt{N_{\bx}}h^\frac{3}{2}
\left(\sum_{\triangle\in\Tau_h}
\seminorm{\nabla p}_{H^1(\triangle)}^2\right)^{\frac{1}{2}}
\seminorm{q_h}_{\triangle,*}} \\
\lesssim& {C_{\text{inv}}C_1C_d\sqrt{N_{\bx}}h^{\frac{3}{2}}
\seminorm{\nabla p}_{h,1}\seminorm{q_h}_{\triangle,*}} \\
\label{eq:rhie_chow_consistency_1}
\end{aligned}
\end{equation*}
where we recall that $\seminorm{\cdot}_{h,1}$ denotes the broken $H^1$-norm on the elements of the primal mesh and where we employed the definition \eqref{eq:starnorm_mesh_dep}.

On the other hand, for the finite dimensional case, we prove the following continuity property for the Auxiliary Rhie-Chow operator:
\begin{equation}
s_B(p_h,\PtoBox q_h)\lesssim h^\frac{1}{2}\seminorm{p_h}_{H^1}\seminorm{q_h}_{\triangle,*}
\label{eq:rhie_chow_continuous}
\end{equation}
for any $p_h\in \mathcal{V}_h$.
From the above computations, we have:
\begin{equation}
\begin{aligned}
s_B(p_h,\PtoBox q_h)=& \sum_{\face_{ij} \in \F_h} \int_{\face_{ij}}
w_{ij}\text{D}^{-1}_i
\left(\int_{\bx_i} \nabla p_h\dx - \abs{\bx_i}\nabla p_h\right)
\cdot\vec{n}_{ij} \jump{\PtoBox q_h}_{ij}\ds \\
&+\sum_{\face_{ij} \in \F_h} \int_{\face_{ij}}
(1-w_{ij})\text{D}^{-1}_j
\left(\int_{\bx_j}\nabla p_h\dx - \abs{\bx_j}\nabla p_h\right)
\cdot\vec{n}_{ij} \jump{\PtoBox q_h}_{ij}\ds.\\
\end{aligned}
\label{eq:rhie_chow_exploited}
\end{equation}
Also in this case, without loss of generality, we treat only the case of the first addendum, the second term being similar.

By the Cauchy-Schwarz inequality, Property \ref{property:constant_on_diamond}, Assumption \ref{ass:mesh_regularity} and the H\"older inequality, we obtain
\begin{equation}
\begin{aligned}
\sum_{\face_{ij} \in \F_h} \int_{\face_{ij}}&
w_{ij}\text{D}^{-1}_i
\left(\int_{\bx_i} \nabla p_h\dx - \abs{\bx_i}\nabla p_h\right)
\cdot\vec{n}_{ij} \jump{\PtoBox q_h}_{ij}\ds \\
=& \sum_{\face_{ij} \in \F_h} \int_{\face_{ij}} w_{ij}\text{D}^{-1}_i
\left(\int_{\bx_i} \nabla p_h \cdot\vec{n}_{ij}\dx -
\cfrac{\abs{\bx_i}}{\abs{\dmond_{ij}}}\int_{\dmond_{ij}}\nabla p_h\cdot\vec{n}_{ij}\dx\right)
 \jump{\PtoBox q_h}_{ij}\ds \\
\leq& \sum_{\face_{ij} \in \F_h} \int_{\face_{ij}} w_{ij}\text{D}^{-1}_i
\int_{\bx_i} \abs{\nabla p_h \cdot\vec{n}_{ij}}\dx\jump{\PtoBox q_h}_{ij}\ds \\
&+ \sum_{\face_{ij} \in \F_h} \int_{\face_{ij}} w_{ij}\text{D}^{-1}_i
\cfrac{\abs{\bx_i}}{\abs{\dmond_{ij}}}\int_{\dmond_{ij}}
\abs{\nabla p_h\cdot\vec{n}_{ij}}\dx
 \jump{\PtoBox q_h}_{ij}\ds\\
\lesssim& \sum_{\face_{ij} \in \F_h} h_\triangle^{2-d}
\left(\int_{\face_{ij}}
\abs{\int_{\bx_i} \nabla p_h\cdot\vec{n}_{ij}\dx}^2\ds
\right)^{\frac{1}{2}}
\left(d_{ij}^2\int_{\face_{ij}}
\abs{\pdd{q_h}{}{\vec{n}_{ij}}}^2\ds\right)^{\frac{1}{2}}\\
&+ \sum_{\face_{ij} \in \F_h} h_\triangle^{2-d}
\left(\int_{\face_{ij}}
\cfrac{\abs{\bx_i}^2}{\abs{\dmond_{ij}}^2}
\abs{\int_{\dmond_{ij}}
\nabla p_h\cdot\vec{n}_{ij}\dx}^2\ds
\right)^{\frac{1}{2}}
\left(d_{ij}^2\int_{\face_{ij}}
\abs{\pdd{q_h}{}{\vec{n}_{ij}}}^2\ds\right)^{\frac{1}{2}}\\
\lesssim& \sum_{\face_{ij} \in \F_h} h_\triangle^{2-d}
\left(\int_{\face_{ij}}
\int_{\bx_i}\abs{\nabla p_h}^2\dx
\int_{\bx_i}\abs{\vec{n}_{ij}}^2\dx\ds
\right)^{\frac{1}{2}}
\left(d_{ij}^2\int_{\face_{ij}}
\abs{\pdd{q_h}{}{\vec{n}_{ij}}}^2\ds\right)^{\frac{1}{2}}\\
&+ \sum_{\face_{ij} \in \F_h} h_\triangle^{2-d}
\left(\int_{\face_{ij}}\int_{\dmond_{ij}}
\abs{\nabla p_h}^2\dx\int_{\dmond_{ij}}\abs{\vec{n}_{ij}}^2\dx\ds
\right)^{\frac{1}{2}}
\left(d_{ij}^2\int_{\face_{ij}}
\abs{\pdd{q_h}{}{\vec{n}_{ij}}}^2\ds\right)^{\frac{1}{2}}\\
\lesssim& \sum_{\face_{ij} \in \F_h} h_\triangle^{2-d}
\left(h_\triangle^{d-1}h_\triangle^d
\seminorm{p_h}_{H^1(\bx_i)}^2
\right)^{\frac{1}{2}}
\left(h_\triangle d_{ij}\int_{\face_{ij}}
\abs{\pdd{q_h}{}{\vec{n}_{ij}}}^2\ds\right)^{\frac{1}{2}}\\
&+ \sum_{\face_{ij} \in \F_h} h_\triangle^{2-d}
\left(h_\triangle^{d-1}h_\triangle^d\seminorm{p_h}_{H^1(\dmond_{ij})}^2
\right)^{\frac{1}{2}}
\left(h_\triangle d_{ij}\int_{\face_{ij}}
\abs{\pdd{q_h}{}{\vec{n}_{ij}}}^2\ds\right)^{\frac{1}{2}}\\
\lesssim&
\sum_{\face_{ij} \in \F_h} h_\triangle^{2}
\seminorm{p_h}_{H^1(\bx_i)}
\left(d_{ij}\int_{\face_{ij}}
\abs{\pdd{q_h}{}{\vec{n}_{ij}}}^2\ds\right)^{\frac{1}{2}}\\
\lesssim&  h^{\frac{1}{2}} \left(\sum_{\face_{ij} \in \F_h}\seminorm{p_h}_{H^1(\bx_i)}^2\right)^{\frac{1}{2}}
\left(\sum_{\face_{ij} \in \F_h} h_\triangle^3
d_{ij}\int_{\face_{ij}}
\abs{\pdd{q_h}{}{\vec{n}_{ij}}}^2\ds\right)^{\frac{1}{2}} \\
=&h^\frac{1}{2}\seminorm{p_h}_{H^1}\seminorm{q_h}_{\triangle,*}.
\end{aligned}
\end{equation}

For what concerns the continuity for the Rhie-Chow operator \eqref{def:rhie_chow} we show that a similar estimate holds, namely
\begin{equation}
\tilde{s}_B(p_h,\PtoBox q_h)\lesssim h^\frac{1}{2}\seminorm{p_h}_{H^1}\seminorm{q_h}_{\triangle,*}.
\label{eq:rhie_chow_continuous_dicrete}
\end{equation}

Indeed, it is sufficient to consider the consistency property \eqref{eq:rhie_chow_consistency} and the continuity of $s_B$, then we have
\begin{equation}
\begin{aligned}
\tilde{s}_B(p_h,\PtoBox q_h)=&
(\tilde{s}_B - s_B)(p_h,\PtoBox q_h) + s_B(p_h,\PtoBox q_h) \\
\lesssim& (h^{\frac{3}{2}} + h^\frac{1}{2})\seminorm{p_h}_{H^1}\seminorm{q_h}_{\triangle,*}.
\end{aligned}
\label{eq:rhie_chow_exploited_discrete}
\end{equation}

\paragraph{Coercivity}
We analyse now the coercivity properties of the Rhie-Chow stabilization. Let $q_h\in\V_h$ and consider equation \eqref{eq:rhie_chow_exploited}. In view of
\begin{equation}
\seminorm{q_h}_{\triangle,*}^2
\simeq h^3 \seminorm{q_h}_*^2,
\end{equation}
we numerically assess the validity of the following
\begin{conjecture} \label{lemma:rhiechow_coercivity}
The Rhie-Chow stabilization operator satisfies the following coercivity estimate:
\begin{equation}
\tilde{s}_B(q_h,\PtoBox q_h) \gtrsim h^3 \seminorm{q_h}_*^2.\\
\end{equation}
\end{conjecture}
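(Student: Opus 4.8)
The plan is to reduce the coercivity to a quantitative comparison between two discrete Laplacians. Using Property~\ref{property:constant_on_diamond} to write $\tfrac{\PtoBox q_j-\PtoBox q_i}{d_{ij}}=\pdd{q_h}{}{\vec{n}_{ij}}$, the subtracted piece becomes the \emph{compact} two-point Laplacian
\[
\tilde{s}_B^2(q_h,\PtoBox q_h)=\sum_{\face_{ij}\in\F_h}\Big(w_{ij}\abs{\bx_i}\text{D}_i^{-1}+(1-w_{ij})\abs{\bx_j}\text{D}_j^{-1}\Big)\,d_{ij}\int_{\face_{ij}}\Big|\pdd{q_h}{}{\vec{n}_{ij}}\Big|^2\ds,
\]
which I identify with a quadratic form $\vec{q}^\intercal\,\text{L}\,\vec{q}$, symmetric positive semidefinite with kernel spanned by the constant vector; since its coefficient satisfies $\text{D}_i^{-1}\abs{\bx_i}\simeq h^2$ under Assumption~\ref{ass:mesh_regularity}, one has $\tilde{s}_B^2(q_h,\PtoBox q_h)\simeq h^2\seminorm{q_h}_*^2$. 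The first piece $\tilde{s}_B^1$ has, by Remark~\ref{rem:rhie_chow_meaning}, the algebraic representation $\vec{q}^\intercal\,\text{B}\text{D}^{-1}\text{B}^\intercal\,\vec{q}$, the \emph{wide-stencil} Laplacian, which is again symmetric positive semidefinite with the same constant kernel. Thus $\tilde{s}_B(q_h,\PtoBox q_h)=\vec{q}^\intercal\text{S}\vec{q}$ with $\text{S}:=\text{B}\text{D}^{-1}\text{B}^\intercal-\text{L}$, and the whole content of Conjecture~\ref{lemma:rhiechow_coercivity} is the positivity and the sharp order of $\text{S}$ on the mean-zero subspace.

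Next I would recast $\text{S}$ so as to expose what it measures. Subtracting the two face sums and using $\jump{\PtoBox q_h}_{ij}=d_{ij}\pdd{q_h}{}{\vec{n}_{ij}}$ gives
\begin{equation*}
\begin{aligned}
\tilde{s}_B(q_h,\PtoBox q_h)=\sum_{\face_{ij}\in\F_h}\abs{\face_{ij}}\,d_{ij}\Big[&w_{ij}\text{D}_i^{-1}\abs{\bx_i}\big(\nabla_{h,i}q_h\cdot\vec{n}_{ij}-\pdd{q_h}{}{\vec{n}_{ij}}\big)\\
&+(1-w_{ij})\text{D}_j^{-1}\abs{\bx_j}\big(\nabla_{h,j}q_h\cdot\vec{n}_{ij}-\pdd{q_h}{}{\vec{n}_{ij}}\big)\Big]\pdd{q_h}{}{\vec{n}_{ij}},
\end{aligned}
\end{equation*}
so that, up to the weights $\text{D}_i^{-1}\abs{\bx_i}\simeq h^2$, the stabilization is the face-weighted correlation between the \emph{gradient-reconstruction defect} $\nabla_{h,i}q_h\cdot\vec{n}_{ij}-\pdd{q_h}{}{\vec{n}_{ij}}$ and the compact normal derivative. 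The plan is then to establish $\text{S}\succeq0$ by a local decomposition: on each box $\bx_i$ the Gauss--Green gradient $\nabla_{h,i}q_h$ is a volume average of the surrounding face data, so $\abs{\bx_i}^2\abs{\nabla_{h,i}q_h}^2$ is a single positive quadratic form in the face differences $\{\pdd{q_h}{}{\vec{n}_{ik}}\}_k$ that keeps the tangential and non-local content which $\text{L}$ discards. Assembling these box forms and comparing with the per-face compact energy should yield $\text{B}\text{D}^{-1}\text{B}^\intercal=\text{L}+\text{R}$ with $\text{R}\succeq0$; here the orthogonality of the Voronoi dual mesh and the shape regularity of Assumption~\ref{ass:mesh_regularity} are what make the per-box geometry uniform enough to attempt the bound. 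The final order would then be extracted by a spectral argument, converting the algebraic gap into the $h^3\seminorm{q_h}_*^2$ lower bound through the scalings $\text{D}_i^{-1}\simeq h^{2-d}$, $\abs{\bx_i}\simeq h^d$, $\abs{\face_{ij}}\simeq h^{d-1}$, $d_{ij}\simeq h$ and the norm equivalences of Proposition~\ref{prop:starnorm_properties}.

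A useful consistency check on the target exponent comes from the continuity estimate \eqref{eq:rhie_chow_continuous_dicrete}: with $\seminorm{q_h}_{H^1}\simeq\seminorm{q_h}_*$ and $\seminorm{q_h}_{\triangle,*}\simeq h^{3/2}\seminorm{q_h}_*$ it gives $\tilde{s}_B(q_h,\PtoBox q_h)\lesssim h^2\seminorm{q_h}_*^2$, so the conjectured lower bound sits exactly one power of $h$ below the provable upper bound, and closing this $[h^3,h^2]$ window is precisely the crux.

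I expect the positivity-with-sharp-order step to be the main obstacle, and for a structural reason: $\text{S}$ is the difference of two positive semidefinite operators that agree to leading order (both are consistent discretizations of $-\nabla\cdot(\text{D}^{-1}\nabla)$), so on smooth modes they nearly cancel and only their truncation-error difference survives -- a termwise or purely local comparison cannot see this gap, and the $h^3$ factor must emerge from the smoothest non-constant modes. On a structured patch one can make this rigorous by Fourier analysis of the symbols of the wide-stencil and compact Laplacians; the genuine difficulty is a \emph{uniform} quantitative spectral-gap estimate, valid on arbitrary shape-regular Delaunay--Voronoi meshes and sharp enough to certify the exponent $3$ rather than merely some positive power, which is exactly why the statement is retained here as a numerically validated conjecture.
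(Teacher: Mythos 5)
This statement is a \emph{conjecture} in the paper: the authors offer no analytical proof. Their entire support for it is numerical: they assemble the algebraic counterpart $\text{S}$ of $\tilde{s}_B$ and the matrix $\text{Q}$ representing the $*$-norm, compute the generalized Rayleigh quotient $R_*(s_B)$ of \eqref{eq:rhie_chow_rayleigh} on sequences of 2D and 3D Voronoi meshes, and observe that the minimum generalized eigenvalue decays with rate $\approx 3$ (Tables \ref{tab:rhiechow_eigvals} and \ref{tab:rhiechow_eigvals_3d}). Your proposal, by its own admission, also does not prove the statement --- but it lands exactly where the paper lands: your final ``spectral argument'' is precisely the generalized eigenvalue problem the paper evaluates numerically, and your closing diagnosis (that the crux is a uniform, sharp spectral-gap estimate for the difference of two nearly cancelling consistent discretizations, out of reach on arbitrary Delaunay--Voronoi meshes) is in substance the reason the paper retains this as a numerically validated conjecture. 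Your added observations --- the $[h^3,h^2]$ window obtained by combining \eqref{eq:rhie_chow_continuous_dicrete} with $\seminorm{q_h}_{\triangle,*}^2\simeq h^3\seminorm{q_h}_*^2$, and the explanation of why termwise or purely local comparisons cannot see the gap --- are correct and genuinely useful framing that the paper only states implicitly.

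One concrete flaw would surface if anyone tried to execute your positivity step as written. Following Remark \ref{rem:rhie_chow_meaning} you orient the difference as $\text{S}=\text{B}\text{D}^{-1}\text{B}^\intercal-\text{L}$ and propose $\text{B}\text{D}^{-1}\text{B}^\intercal=\text{L}+\text{R}$ with $\text{R}\succeq0$, asserting both operators are positive semidefinite ``with the same constant kernel.'' That is exactly what fails: the wide-stencil operator $\text{B}\text{D}^{-1}\text{B}^\intercal$ annihilates (or nearly annihilates) checkerboard-type modes that the compact Laplacian $\text{L}$ does not --- this is the very reason colocated schemes need Rhie--Chow --- so on such modes $\vec{q}^\intercal\text{S}\vec{q}\approx-\vec{q}^\intercal\text{L}\vec{q}<0$ and no such decomposition exists. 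The object that can be positive is the opposite difference, compact minus wide (in 1D uniform Fourier terms its symbol is proportional to $\sin^4(\xi h/2)\geq 0$), and the positivity proof would then have to establish compact $\succeq$ wide on unstructured Voronoi meshes, itself nontrivial. To be fair, you inherited this orientation from the paper, whose own sign bookkeeping is inconsistent: with its jump convention $\jump{\PtoBox q_h}_{ij}=\PtoBox q_i-\PtoBox q_j$ one gets $\tilde{s}_B^2(q_h,\PtoBox q_h)\leq 0$, not $\geq0$ as in your first display, which clashes with the stated matrix expression for $\mat{C}$. Finally, the sharp exponent is more delicate than your plan suggests: on a uniform periodic 1D mesh the compact-minus-wide form scales like $h^4\seminorm{q_h}_*^2$, not $h^3$, so the observed rate $3$ presumably involves boundary and unstructured-mesh effects --- further evidence that, at present, only the paper's numerical route is available, and that your decision to stop at a conjecture-level assessment was the right one.
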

To this aim, we compute the minimum generalized eigenvalue of the Rhie-Chow stabilization with respect to the $*$-norm and show that it decreases $h$ goes to zero. We consider the algebraic counterpart of $s_B$ and $\seminorm{\cdot}_{\triangle,*}$ and we compute
\begin{equation}
R_*(s_B)=\min\limits_{\vec{q}\in \mathbb{R}^{N_p}}
\cfrac{\vec{q}^\intercal\text{S}\vec{q}}
{\vec{q}^\intercal\text{Q}\vec{q}},
\label{eq:rhie_chow_rayleigh}
\end{equation}
where Q is the matrix that represents the $*$-norm of $\vec{q}$.

We generated Voronoi dual grids of a squared domain $\domain=[-1,1]^d$ where $d=2,3$, for different values of $h$. The results are reported in Tables \ref{tab:rhiechow_eigvals} and \ref{tab:rhiechow_eigvals_3d}.

We observe that the minimum generalized eigenvalue diminishes with rate 3 thus suggesting the validity of the Conjecture.
\begin{table}[!ht]
\centering
\renewcommand{\arraystretch}{1.25}
\begin{tabular}{c|cccccc}
\hline
\multicolumn{7}{c}{Coercivity constant of Rhie-Chow stabilization, 2D case}\\
\hline
$h$ & 0.025 & 0.013 & 0.0063 & 0.0031 & 0.0016 & 0.00078\\
$R_*(s_B)$ & 9.6e-07 & 1.3e-07 & 1.7e-08 & 2.2e-09 & 2.8e-10 & 3.3e-11\\
& -- & 2.9 & 2.9 & 3 & 3 & 3\\
\hline
\end{tabular}
\caption{Minimum generalized eigenvalue of Rhie-Chow matrix with respect to $*$-norm computed on a uniform polygonal mesh. It is also reported the diminishing rate of minimum eigenvalues, computed as $\log_2(R_*\vert_h/R_*\vert_{\frac{h}{2}})$, representing the coercivity lower bound of $s_B$.}
\label{tab:rhiechow_eigvals}
\end{table}
\begin{table}[!ht]
\centering
\renewcommand{\arraystretch}{1.25}
\begin{tabular}{c|ccccc}
\hline
\multicolumn{6}{c}{Coercivity constant of Rhie-Chow stabilization, 3D case}\\
\hline
$h$ & 0.1 & 0.05 & 0.025 & 0.013 & 0.0063\\
$R_*(s_B)$ & 9.3e-05 & 1.3e-05 & 1.9e-06 & 2.6e-07 & 3.6e-08\\
& -- & 2.8 & 2.8 & 2.8 & 2.9\\
\hline
\end{tabular}
\caption{Minimum generalized eigenvalue of Rhie-Chow matrix with respect to $*$-norm computed on a uniform polyhedral mesh. It is also reported the diminishing rate of minimum eigenvalues, computed as $\log_2(R_*\vert_h/R_*\vert_{\frac{h}{2}})$, representing the coercivity lower bound of $s_B$ (c.f. equation \eqref{eq:rhie_chow_rayleigh}).}
\label{tab:rhiechow_eigvals_3d}
\end{table}

\paragraph{\textit{inf-sup} stability}
In the following we study the validity of the generalized \textit{inf-sup} property for the Rhie-Chow stabilization.

\begin{conjecture}[Generalized \textit{inf-sup} for Rhie-Chow]
\label{lemma:rhiechow_inf_sup}
$\exists \beta_h > 0$, independent of $h$ s.t.
\begin{equation}
\sup_{\vec{v}_h\in \bs{\V}_h}
\cfrac{\tilde{c}_B(\vec{v}_h, \PtoBox q_h)}
{\seminorm{\vec{v}_h}_{*}}
+ \tilde{s}_B(q_h,\PtoBox q_h)^{\frac{1}{2}} \geq \beta_h \norm{\PtoBox q_h}_{L^2},
\qquad \forall q_h\in\V_h
\label{eq:ctilde_inf_sup}
\end{equation}
\end{conjecture}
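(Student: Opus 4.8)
The plan is to establish \eqref{eq:ctilde_inf_sup} by the classical combination argument for stabilized saddle-point problems (in the spirit of Verf\"urth and Pitk\"aranta): start from the continuous \textit{inf-sup} of Theorem \ref{thm:stokes_well_posed_0}, realise its supremum by a quasi-interpolated discrete velocity, and let the Rhie-Chow term absorb the Fortin defect of the lowest-order pair. Fix $q_h\in\V_h$ (without loss of generality with zero mean). By \eqref{eq:infsup} there is $\vec v\in\bs{\V}_0$ with
\[
-b(\vec v, q_h) \geq \beta\,\norm{q_h}_{L^2}^2, \qquad \norm{\nabla\vec v}_{L^2}=\norm{q_h}_{L^2}.
\]
I would then take $\vec v_h=\I_h\vec v\in\bs{\V}_{h,0}$, a Cl\'ement/Scott--Zhang quasi-interpolant: it is $H^1$-stable, $\seminorm{\vec v_h}_{H^1}\lesssim\seminorm{\vec v}_{H^1}$, satisfies the local bound $\norm{\vec v-\vec v_h}_{L^2(\triangle)}\lesssim h_\triangle\seminorm{\vec v}_{H^1(\omega_\triangle)}$ on element patches, and vanishes on $\Gamma$. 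By Proposition \ref{prop:starnorm_properties} the $*$-norm and the $H^1$-seminorm are equivalent on $\bs{\V}_h$, so $\seminorm{\vec v_h}_*\lesssim\norm{q_h}_{L^2}$; this $\vec v_h$ is the candidate that realises the supremum up to controllable errors.

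Next I would convert the discrete coupling into the continuous form $b$. Using the equivalence $c_B(\vec v_h,\PtoBox q_h)=-b(\vec v_h,q_h)$ from \eqref{eq:bilforms_equivalence} and inserting $\pm\,\vec v$, I split
\[
\tilde{c}_B(\vec v_h,\PtoBox q_h) = -b(\vec v,q_h) + b(\vec v-\vec v_h,q_h) + (\tilde{c}_B-c_B)(\vec v_h,\PtoBox q_h).
\]
The first term is $\geq\beta\norm{q_h}_{L^2}^2$; the consistency defect $(\tilde{c}_B-c_B)$ is controlled by an estimate of the same flavour as the Rhie-Chow consistency \eqref{eq:rhie_chow_consistency} (to be proved for the $\widetilde{(\cdot)}$-forms in the analysis). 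The decisive term is the interpolation error: integrating by parts element-wise and using that $q_h$ is continuous (interior jumps cancel) while $\vec v-\vec v_h$ vanishes on $\Gamma$, one gets $b(\vec v-\vec v_h,q_h)=\int_\domain(\vec v-\vec v_h)\cdot\nabla q_h\dx$, whence Cauchy--Schwarz and the local interpolation bound give
\[
\abs{b(\vec v-\vec v_h,q_h)}\lesssim h\,\seminorm{\vec v}_{H^1}\seminorm{q_h}_{H^1}= h\,\norm{q_h}_{L^2}\seminorm{q_h}_*.
\]

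The remaining, and decisive, step is to dominate this error by $\tilde{s}_B(q_h,\PtoBox q_h)^{1/2}\norm{q_h}_{L^2}$. Conjecture \ref{lemma:rhiechow_coercivity} provides $\tilde{s}_B(q_h,\PtoBox q_h)\gtrsim h^3\seminorm{q_h}_*^2$, i.e.\ $\tilde{s}_B^{1/2}\gtrsim h^{3/2}\seminorm{q_h}_*$. Were the interpolation error of order $h^{3/2}\norm{q_h}_{L^2}\seminorm{q_h}_*$, the argument would close: dividing the identity above by $\seminorm{\vec v_h}_*\lesssim\norm{q_h}_{L^2}$ and rearranging in the Verf\"urth manner yields
\[
\sup_{\vec v_h\in\bs{\V}_h}\frac{\tilde{c}_B(\vec v_h,\PtoBox q_h)}{\seminorm{\vec v_h}_*} \geq c_1\norm{q_h}_{L^2}-c_2\,\tilde{s}_B(q_h,\PtoBox q_h)^{1/2},
\]
which, together with the lumping-map equivalence $\norm{\PtoBox q_h}_{L^2}\lesssim\norm{q_h}_{L^2}$, is exactly \eqref{eq:ctilde_inf_sup} with an $h$-independent $\beta_h$.

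I expect the matching of powers of $h$ to be the main obstacle. The naive estimate only gives $\abs{b(\vec v-\vec v_h,q_h)}\lesssim h\,\norm{q_h}_{L^2}\seminorm{q_h}_*$, which overshoots the available $h^{3/2}\norm{q_h}_{L^2}\seminorm{q_h}_*$ by a factor $h^{-1/2}$; equivalently, the Rhie-Chow energy is a full power of $h$ weaker than the Brezzi-Pitk\"aranta stabilization for which this combination argument is sharp. Closing the gap with an $h$-independent constant would require a supercloseness estimate for the consistency error $b(\vec v-\vec v_h,q_h)$ recovering the missing half power of $h$, presumably by exploiting the orthogonality of the Voronoi dual mesh (Assumption \ref{ass:mesh_regularity}) and the cancellation between adjacent diamonds, in the spirit of the face-barycentre Taylor expansion \eqref{eq:pl_taylor_exp_on_dmond_0} used for consistency. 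This is precisely the delicate point motivating the statement as a conjecture whose validity is assessed numerically through the generalized eigenvalue computations, as done for the coercivity constant in Tables \ref{tab:rhiechow_eigvals}--\ref{tab:rhiechow_eigvals_3d}.
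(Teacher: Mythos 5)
Your proposal is not a proof, and you say so yourself: the Verf\"urth-type combination argument fails by half a power of $h$, because the Cl\'ement/Fortin defect scales like $h\norm{q_h}_{L^2}\seminorm{q_h}_*$ while the Rhie--Chow energy, even granting Conjecture \ref{lemma:rhiechow_coercivity}, only controls $h^{3/2}\seminorm{q_h}_*$ --- one full power of $h$ weaker than Brezzi--Pitk\"aranta, for which the absorption step is sharp. That diagnosis is correct, and it is consistent with the status of the statement in the paper: it is a Conjecture, not a Lemma, and the paper offers no analytical proof either. One simplification you missed: for piecewise linear velocities the defect $(\tilde{c}_B-c_B)(\vec{v}_h,\PtoBox q_h)$ vanishes identically, cf. \eqref{eq:interp_discr_consistency}, since $\int_{\face_{ij}}(\vec{x}-\faceCentre_{ij})\ds=0$; so no Rhie--Chow-type consistency estimate is needed for that term. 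This does not, however, repair the main gap.

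Where you and the paper genuinely diverge is in the substitute for the missing proof. You propose to assess the conjecture numerically ``as done for the coercivity constant,'' citing Tables \ref{tab:rhiechow_eigvals}--\ref{tab:rhiechow_eigvals_3d}, but the paper's actual assessment is a different and more structured reduction, with its own tables. It rewrites \eqref{eq:ctilde_inf_sup} in algebraic form \eqref{eq:infsup_alge}, bounds the supremum from below by the single choice $\vec{w}=\mat{A}^{-1}\mat{B}^\intercal\vec{q}$ (a discrete pressure-Poisson lift, not an interpolant of the continuous inf-sup realizer), uses $\mat{A}=\nu\mat{H}$ to collapse the denominator, and concludes in \eqref{eq:schur_rayleigh} that $\beta_h^2$ is the minimum generalized eigenvalue of the stabilized Schur complement $\mat{S}_{\text{box}}+\mat{C}$ with respect to the mass matrix $\mat{V}$; this eigenvalue is then computed on mesh sequences in Tables \ref{tab:schur_eigvals} and \ref{tab:schur_eigvals_3d} and observed to remain bounded away from zero. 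That route bypasses the continuous inf-sup, the Fortin operator and the problematic absorption step entirely, at the price of remaining numerical evidence rather than a theorem. Your analysis is complementary rather than equivalent: it explains \emph{why} the standard analytical machinery cannot settle the question --- a point the paper itself does not discuss --- but as a verification of the statement it stops short, whereas the paper's eigenvalue computation is the verification it actually relies on.
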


To numerically explore the validity of \eqref{eq:ctilde_inf_sup}, we employ a numerical assessment. Having in mind system \eqref{eq:monolithic_alge}, let us consider the Rhie-Chow stabilized monolithic algebraic system corresponding to the Box method formulation of Stokes system \eqref{eq:stokes:box}:
\begin{equation}
\left[
\begin{matrix} \text{A} & \text{B}^\intercal \\ \text{B} & \mat{C} \\ \end{matrix}
\right]
\left[\begin{matrix} \vec{u} \\ \vec{p} \end{matrix} \right]
=  \left[\begin{matrix} \vec{F} \\ \vec{0} \end{matrix} \right]
\label{eq:monolithic_alge_rhiechow}
\end{equation}
where C is the matrix associated to Rhie-Chow stabilization. Equation \eqref{eq:ctilde_inf_sup} corresponds to the following algebraic inequality: $\forall \vec{q}\in\Real^{N_p}$,
\begin{equation}
\sup_{\vec{w}\in\Real^{N_u}}
\cfrac{\vec{q}^\intercal \mat{B} \vec{w}}
{\sqrt{\vec{w}^\intercal\mat{H}\vec{w}}
\sqrt{\vec{q}^\intercal\mat{V}\vec{q}}}
+ \cfrac{\sqrt{\vec{q}^\intercal \mat{C} \vec{q}}}
{\sqrt{\vec{q}^\intercal\mat{V}\vec{q}}} \geq \beta_h
\label{eq:infsup_alge}
\end{equation}
where $\mat{V}\in\Real^{N_p \times N_p}$ is the mass matrix, i.e. a diagonal matrix with box volumes on diagonal ($\mat{V}_{ii}=\abs{\bx_i}$), representing the $L^2$-norm of box-wise constant functions, and $\mat{H}\in\Real^{N_u\times N_u}$ is the matrix representing the $*$-norm in $d$ dimensions. Notice also that by construction of A, it holds $\mat{A}=\nu \mat{H}$.

Choose now $\vec{w} = \mat{A}^{-1}\mat{B}^\intercal\vec{q}$, then we have
\begin{equation}
\begin{aligned}
\sup_{\vec{w}\in\Real^{N_u}}
\cfrac{\vec{q}^\intercal \mat{B} \vec{w}}
{\sqrt{\vec{w}^\intercal\mat{H}\vec{w}}
\sqrt{\vec{q}^\intercal\mat{V}\vec{q}}} \geq&
\cfrac{\vec{q}^\intercal \mat{B} \mat{A}^{-1} \mat{B}^\intercal \vec{q}}
{\sqrt{\vec{q}^\intercal \mat{B} \mat{A}^{-1}
\mat{H}\mat{A}^{-1}\mat{B}^\intercal\vec{q}}
\sqrt{\vec{q}^\intercal\mat{V}\vec{q}}} \\
=& \sqrt{\nu} \cfrac{\vec{q}^\intercal \mat{B} \mat{A}^{-1} \mat{B}^\intercal \vec{q}}
{\sqrt{\vec{q}^\intercal \mat{B} \mat{A}^{-1}
\mat{A}\mat{A}^{-1}\mat{B}^\intercal\vec{q}}
\sqrt{\vec{q}^\intercal\mat{V}\vec{q}}}\\
=& \sqrt{\nu} \cfrac{\sqrt{
    \vec{q}^\intercal \mat{B} \mat{A}^{-1} \mat{B} \vec{q}
}}{\sqrt{\vec{q}^\intercal\mat{V}\vec{q}}}.
\end{aligned}
\label{eq:infsup_alge_2}
\end{equation}
Then, we have
\begin{equation}
\begin{aligned}
\sup_{\vec{w}\in\Real^{N_u}}
\cfrac{\vec{q}^\intercal \mat{B} \vec{w}}
{\sqrt{\vec{w}^\intercal\mat{H}\vec{w}}
\sqrt{\vec{q}^\intercal\mat{V}\vec{q}}}
+ \cfrac{\sqrt{\vec{q}^\intercal \mat{C} \vec{q}}}
{\sqrt{\vec{q}^\intercal\mat{V}\vec{q}}} \geq&
\sqrt{\nu} \cfrac{\sqrt{
    \vec{q}^\intercal \mat{B} \mat{A}^{-1} \mat{B} \vec{q}
}}{\sqrt{\vec{q}^\intercal\mat{V}\vec{q}}}
+ \cfrac{\sqrt{\vec{q}^\intercal \mat{C} \vec{q}}}
{\sqrt{\vec{q}^\intercal\mat{V}\vec{q}}} \\
\geq& \sqrt{\nu} \cfrac{\sqrt{
    \vec{q}^\intercal \left(\mat{B} \mat{A}^{-1} \mat{B}
+\mat{C}\right) \vec{q}
}}{\sqrt{\vec{q}^\intercal\mat{V}\vec{q}}}. \\
\end{aligned}
\label{eq:infsup_alge_3}
\end{equation}

Hence,
\begin{equation}
R_*(\mat{S}_{\text{box}}+\mat{C}) \coloneq
\min\limits_{\vec{q}\in \mathbb{R}^{N_p}}
\cfrac{\vec{q}^\intercal(\mat{S}_{\text{box}} + \mat{C})\vec{q}}
{\vec{q}^\intercal\text{V}\vec{q}}=\beta_h^2
\label{eq:schur_rayleigh}
\end{equation}
where the explicit expression of C is
\begin{equation}
\mat{C} = \text{B}\text{D}^{-1}\text{B}^\intercal - \mat{R}(\text{D}^{-1})
\end{equation}
and $\mat{R}(\text{D}^{-1})$ is the scalar Laplacian matrix computed employing $\text{D}^{-1}$ as diffusivity coefficient.

For different values of $h$, we have generated Voronoi dual grids on a squared domain $\domain=[-1,1]^d$ where $d=2,3$, and we have computed $R_*(\mat{S}_{\text{box}}+\mat{C})$, c.f. equation \eqref{eq:schur_rayleigh}. The results are reported in Tables \ref{tab:schur_eigvals} and \ref{tab:schur_eigvals_3d} and show that the minimum generalized eigenvalue does not diminish with $h$, as guessed in Conjecture \ref{lemma:rhiechow_inf_sup}, thus suggesting that the generalized \textit{inf-sup} holds.

\begin{table}[!ht]
\centering
\renewcommand{\arraystretch}{1.25}
\begin{tabular}{c|cccccc}
\hline
\multicolumn{7}{c}{\textit{inf-sup} constant in \eqref{eq:ctilde_inf_sup}, 2D case}\\
\hline
$h$ & 0.025 & 0.013 & 0.0063 & 0.0031 & 0.0016 & 0.00078\\
$R_*(S_{\text{box}} - \mat{C})$ & 0.13 & 0.13 & 0.14 & 0.14 & 0.15 & 0.15\\
\hline
\end{tabular}
\caption{Minimum generalized eigenvalue of $\mat{S}_{\text{box}}+\mat{C}$ on a uniform polygonal mesh.}
\label{tab:schur_eigvals}
\end{table}
\begin{table}[!ht]
\centering
\renewcommand{\arraystretch}{1.25}
\begin{tabular}{c|ccccc}
\hline
\multicolumn{6}{c}{\textit{inf-sup} constant in \eqref{eq:ctilde_inf_sup}, 3D case}\\
\hline
$h$ & 0.1 & 0.05 & 0.025 & 0.013 & 0.0063\\
$R_*(S_{\text{box}} - \mat{C})$ & 0.06 & 0.055 & 0.058 & 0.062 & 0.066\\
\hline
\end{tabular}
\caption{Minimum generalized eigenvalue of $\mat{S}_{\text{box}}+\mat{C}$ computed on a uniform polyhedral mesh.}
\label{tab:schur_eigvals_3d}
\end{table}

\section{Convergence of RCBM:  empirical study} \label{sec:numerical}
In this section, we present some numerical experiments to empirically explore  the convergence properties of the Rhie-Chow stabilized Box Method (RCBM), cf. \eqref{eq:stokes:box} where $\tilde{s}_B$ has been chosen equal to the RC stabilization operator \eqref{def:rhie_chow}. In particular, we estimate the errors in the $H^1$ norm for the velocity and in the $L^2$ norm for the pressure. The method used to solve the Stokes system is the SIMPLE splitting method. Hinging upon the results of the numerical test, we conjecture the validity of the following error estimate
\begin{equation}
\label{thm:convergence_0}
\norm{\vec{u}_B - \vec{u}}_{H^1} + \norm{p_B - p}_{L^2} \lesssim h
\end{equation}
where $(\vec{u}_B,p_B)$
is the solution of
\eqref{eq:stokes:box}, while ($(\vec{u},p)$) is the exact solution of the Stokes problem \eqref{eq:stokes}.
The theoretical validity of the above estimate will be addressed in Section \ref{sec:theory}, where a proof of the convergence will be obtained under
Conjectures \ref{lemma:rhiechow_coercivity} and \ref{lemma:rhiechow_inf_sup}, whose  validity has been empirically addressed in Section \ref{sec:rhiechow:props}.
In the following, we consider two test cases. For both cases, as mentioned in Section \ref{sec:boxmethod}, the computations have been performed employing a Voronoi dual mesh of a Delaunay triangulation (i.e. connecting circumcentres of triangles with straight lines). To generate the dual mesh in 2D we relied on a tool implemented in OpenFOAM called \texttt{polyDualMesh} modified in order to use circumcentres of triangles and nodes of dual mesh instead of barycentres. For the 3D mesh we relied on a custom implementation of a Voronoi grid generator \texttt{voroToFoam} (\texttt{https://github.com/alfiogn/voroToFoam}), based on OpenFOAM and on the open-source software Voro\texttt{++}. The 2D and 3D meshes are represented in Figure \ref{fig:poly_grids}.
\begin{figure}
\includegraphics[width=0.5\linewidth]{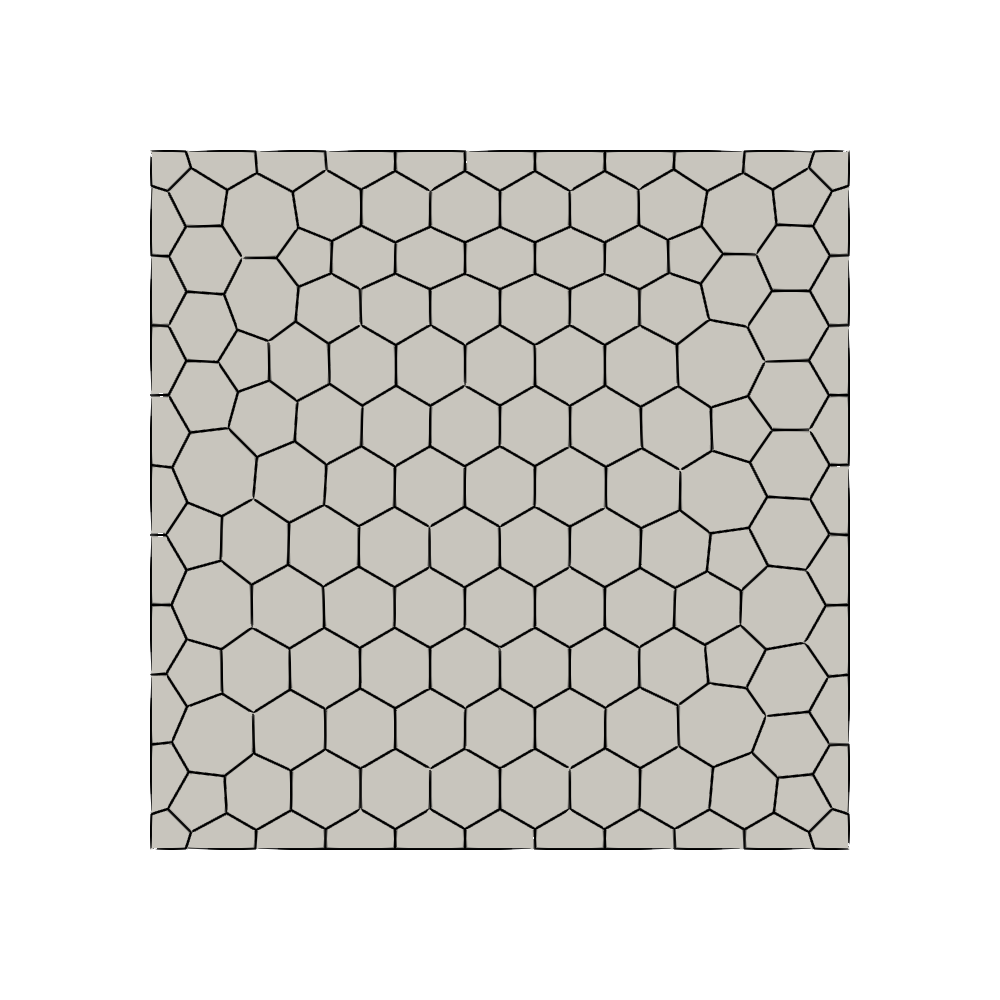}
\includegraphics[width=0.48\linewidth]{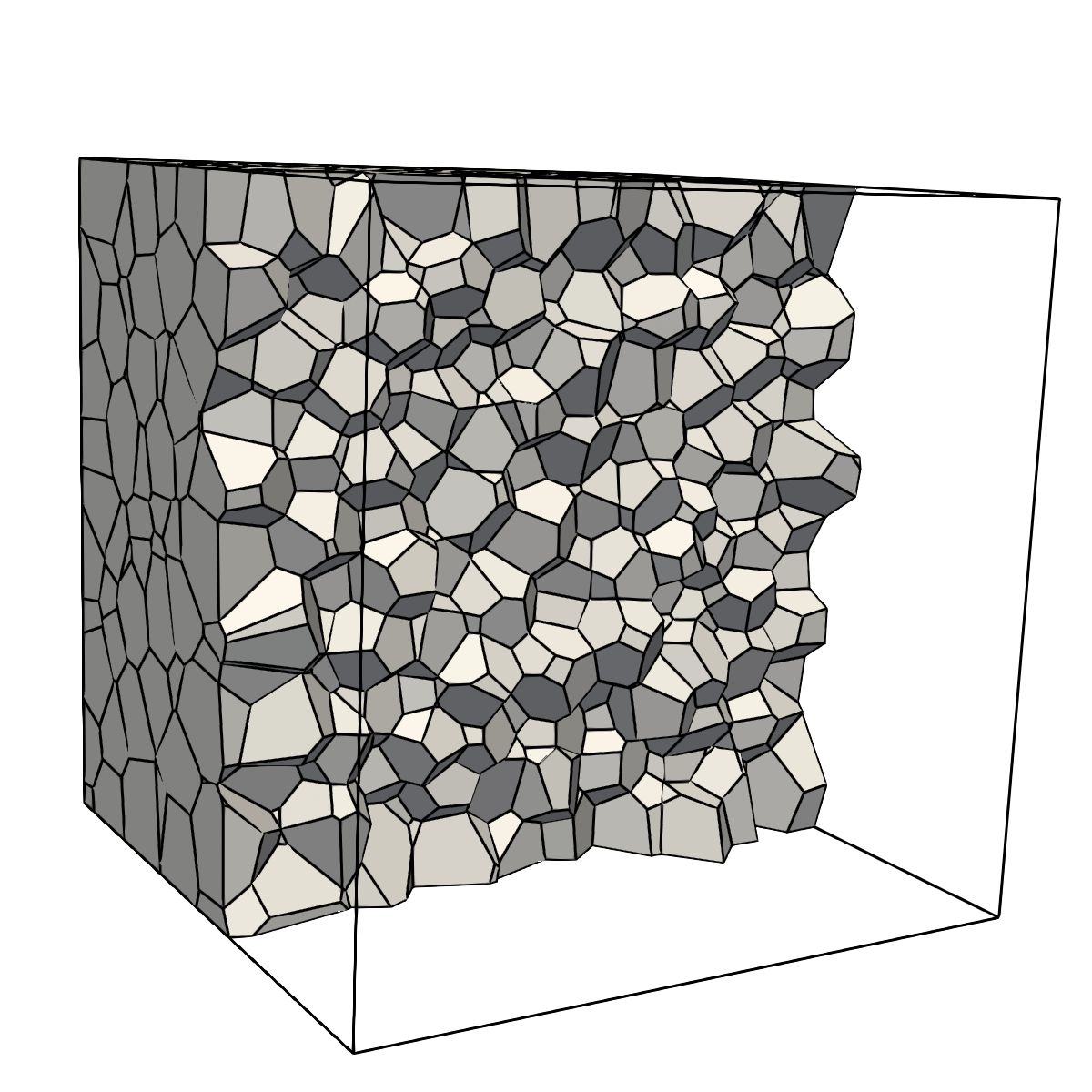}
\caption{Representation of the 2D (left) and the, clipped, 3D (right) Voronoi grids employed in the numerical assessment of convergence properties of the BM.}
\label{fig:poly_grids}
\end{figure}

The first is a 2D case. We consider the domain $\domain=[-1/4,1/4]^2$. We set the analytic solution to
\begin{equation}
\begin{aligned}
\vec{u}=&
\left[\begin{matrix}
- \sin{\left(2 \pi y \right)} \cos{\left(2 \pi x \right)}\\
\sin{\left(2 \pi x \right)} \cos{\left(2 \pi y \right)}
\end{matrix}\right], \\
p = &- \cfrac{1}{4} \left(\cos{\left(4 \pi x \right)}
+ \cos{\left(4 \pi y \right)} \right)
\end{aligned}
\end{equation}
and we consider the following values of $h$:
\[h=0.025,0.0125,0.00625,0.003125.\]

The second is a 3D case. We consider the domain $\domain=[-1/4,1/4]^3$. We set the analytic solution to
\begin{equation}
\begin{aligned}
\vec{u}=&
\left[\begin{matrix}
\sin{\left(2 \pi y \right)} \sin{\left(2 \pi z \right)} \cos{\left(2 \pi x \right)}\\
\sin{\left(2 \pi x \right)} \sin{\left(2 \pi z \right)} \cos{\left(2 \pi y \right)}\\
- 2 \sin{\left(2 \pi x \right)} \sin{\left(2 \pi y \right)} \cos{\left(2 \pi z \right)}\end{matrix}\right]
, \\
p = & - \cfrac{1}{8}\left(
\cos{\left(4 \pi x \right)}
+ \cos{\left(4 \pi y \right)}
+ \cos{\left(4 \pi z \right)}
\right)
\end{aligned}
\end{equation}
and we consider the following values of $h$:
\[h=0.05,0.025,0.0125,0.00625.\]
For both cases we set the boundary conditions accordingly to analytic solutions and $\vec{f}=-\Delta\vec{u} + \nabla p$.

Solving the problems, we obtain the convergence rates of the $H^1$ error for the velocity and $L^2$ error for the pressure, represented in Figure \ref{fig:stokes:conf}. For both velocity $H^1$ error and pressure $L^2$ error we observe a rate of convergence in accordance with Theorem \ref{thm:convergence_0}. In the 3D case, we impute the oscillating behaviour of the pressure error of the 3D case to the fact that is not easy to build a perfectly regular Voronoi grid and this reflects on the accuracy of the method.

\begin{figure}[!ht]
\includegraphics[width=0.49\linewidth]{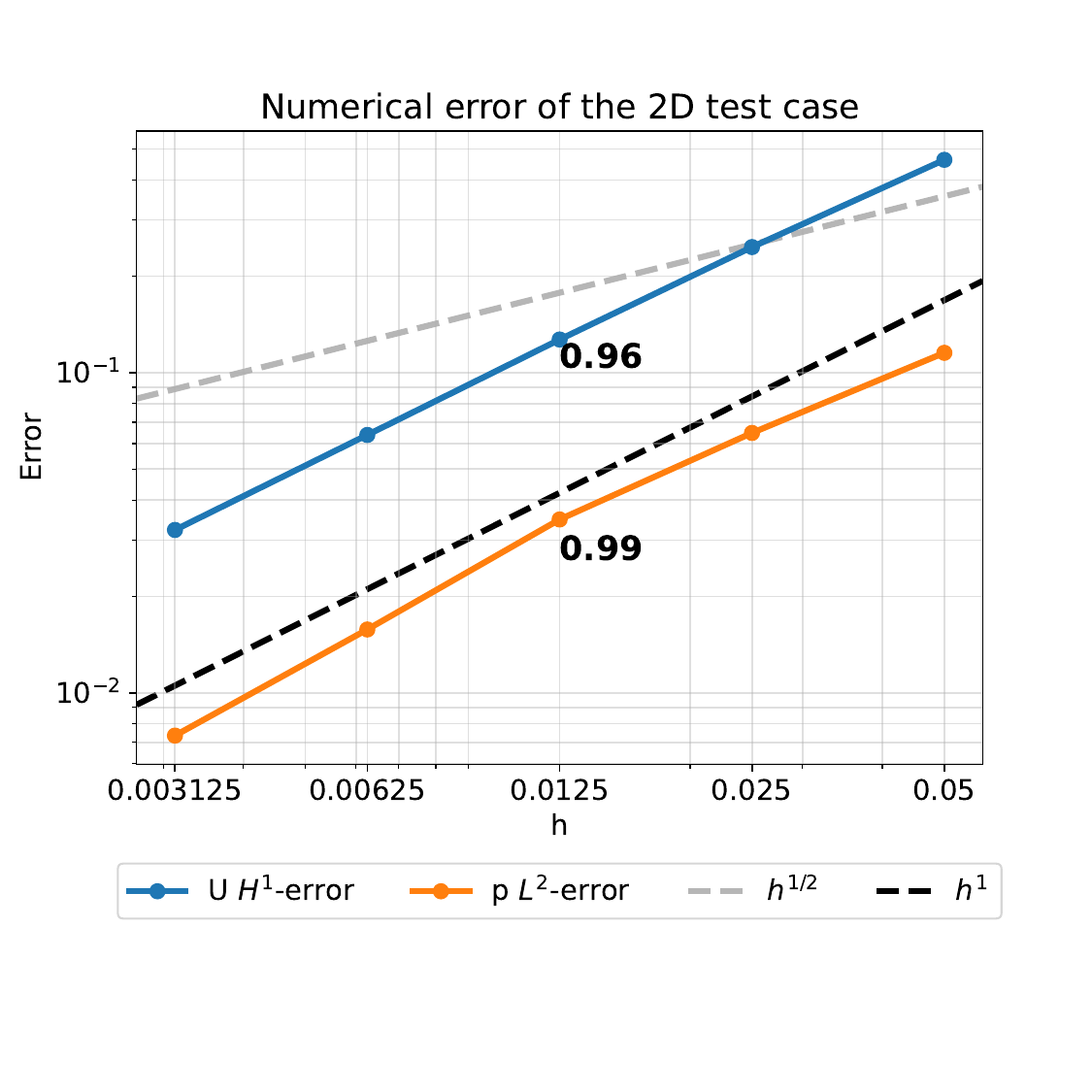}
\includegraphics[width=0.49\linewidth]{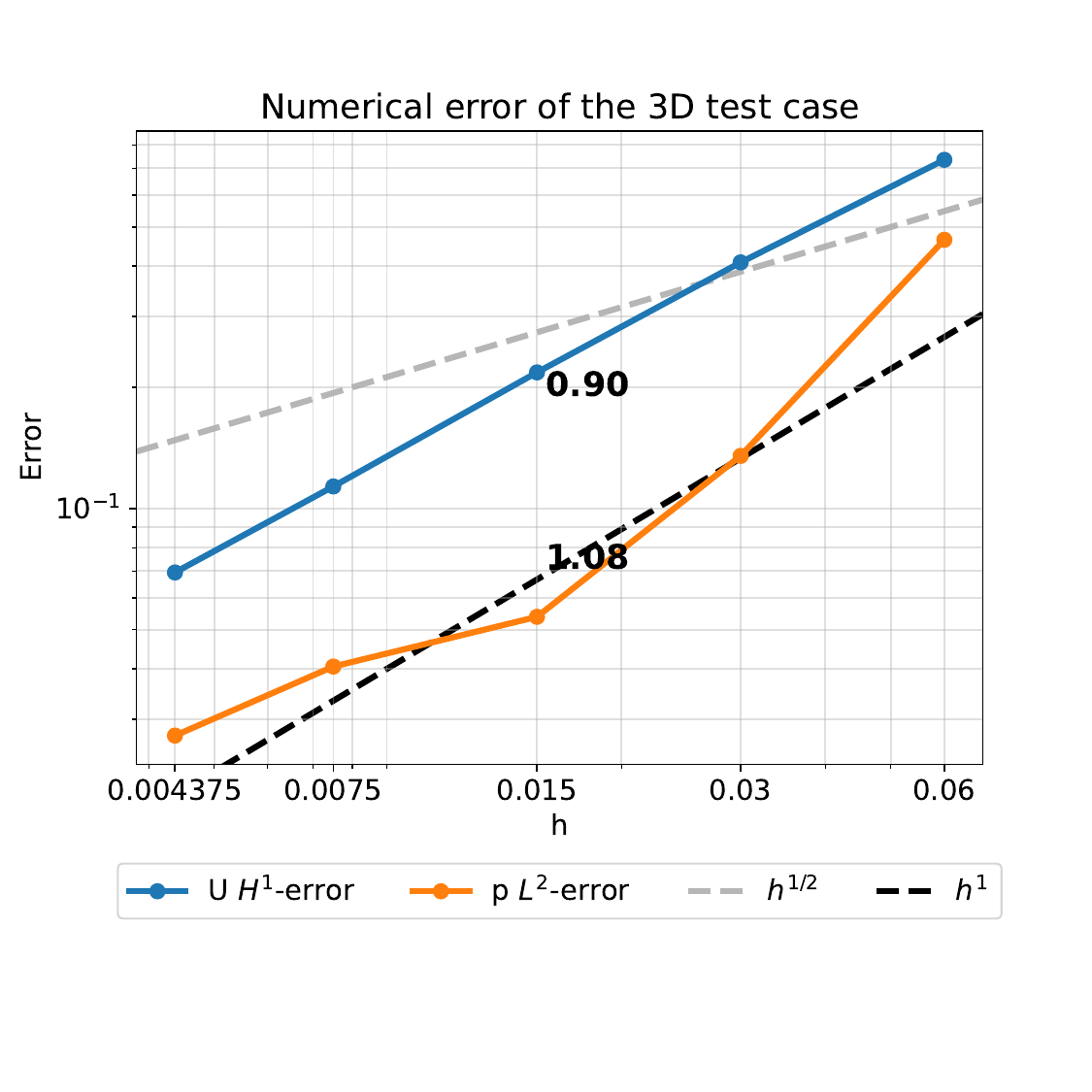}
\caption{Convergence rates of the numerical error of BM solutions. On the left the 2D case errors and on the right the 3D case ones. Numbers are the rates computed using a Least Squares approximation on the log-log plot values.}
\label{fig:stokes:conf}
\end{figure}

\section{Well-posedeness and convergence of RCBM: analysis} \label{sec:theory}
In this section, we address the well-posedeness of RCBM (see Section \ref{sec:wellposed}), together with the theoretical validity of the error estimate \eqref{thm:convergence_0} (cf. Theorem \ref{thm_convergence} below) that will be obtained  under Conjectures \ref{lemma:rhiechow_coercivity} and \ref{lemma:rhiechow_inf_sup}, whose  validity has been empirically addressed in Section \ref{sec:rhiechow:props}.

\subsection{Well-posedness} \label{sec:wellposed}
To prove the well-posedness of problem \eqref{eq:stokes:box}, we first need the following results.
\begin{lemma}[$\widetilde{\C}_B$ consistency] \label{lemma:consistency}
Let $(\vec{u},p) \in \bs{\V} \times (\Q \cap H^2_{\text{loc}}(\domain))$ be the solution to problem \eqref{eq:stokes:weak} and $(\vec{u}_B,p_B)\in \bs{\V}_h \times \V_h$ be the solution to problem \eqref{eq:stokes:box}. Then there holds:
\begin{equation*}
\begin{aligned}
\C_B((\vec{u},p),(\PtoBox \vec{v}_h,\PtoBox q_h))-&
\widetilde{\C}_B((\vec{u}_B,p_B),(\PtoBox \vec{v}_h,\PtoBox q_h))  \\
\lesssim&\left(h^2 \norm{\vec{f}}_{L^2}
+ h\seminorm{p_B}_{H^1}\right)
\trinorm{\vec{v}_h,q_h}_{box}
\end{aligned}
\end{equation*}
$\forall \vec{v}_h \in \bs{\V}_h,\, q_h \in \V_h$ and $\seminorm{\cdot}_{h,1}$ is the $H^1$ broken seminorm.
\end{lemma}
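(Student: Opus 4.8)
The plan is to show that, once the exact solution is tested against $(\PtoBox\vec v_h,\PtoBox q_h)$, the momentum balance and the incompressibility constraint are reproduced \emph{exactly}, so that the whole difference collapses onto the single stabilization term $s_B(p,\PtoBox q_h)$; the lemma is then a bound on this residual, obtained from the continuity of the auxiliary Rhie--Chow operator together with the strong form of the equations.

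First I would use that $\PtoBox\vec v_h$ and $\PtoBox q_h$ are piecewise constant on the boxes, equal to $\vec v_h(\vertex_i)$ and $q_h(\vertex_i)$ on $\bx_i$. Pulling these constants out of the boundary integrals defining $a_B$, $b_B$ and $c_B$ and applying the divergence theorem box by box gives
\begin{align*}
a_B(\vec u,\PtoBox\vec v_h)+b_B(\PtoBox\vec v_h,p)
&=\sum_{\bx_i\in\B_h}\vec v_h(\vertex_i)\cdot\int_{\bx_i}\bigl(-\nu\Delta\vec u+\nabla p\bigr)\dx
=(\vec f,\PtoBox\vec v_h)_\domain, \\
c_B(\vec u,\PtoBox q_h)
&=\sum_{\bx_i\in\B_h}q_h(\vertex_i)\int_{\bx_i}\nabla\cdot\vec u\dx=0,
\end{align*}
where I invoked the strong momentum equation $-\nu\Delta\vec u+\nabla p=\vec f$ and $\nabla\cdot\vec u=0$; both are licit because $p\in H^2_{\text{loc}}(\domain)$ forces $\vec u\in H^2_{\text{loc}}(\domain)$ by elliptic regularity, so the box fluxes are well defined on each interior box. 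Hence $\C_B((\vec u,p),(\PtoBox\vec v_h,\PtoBox q_h))=(\vec f,\PtoBox\vec v_h)_\domain+s_B(p,\PtoBox q_h)$, whereas $\widetilde\C_B((\vec u_B,p_B),(\PtoBox\vec v_h,\PtoBox q_h))=(\vec f,\PtoBox\vec v_h)_\domain$ since $(\vec u_B,p_B)$ solves \eqref{eq:stokes:box}. Subtracting, every contribution cancels except the stabilization and the difference reduces exactly to $s_B(p,\PtoBox q_h)$.

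It then remains to estimate $s_B(p,\PtoBox q_h)$, which carries the genuine content of the statement. Recalling that $s_B$ measures, face by face, the gap between the box average $\tfrac1{\abs{\bx_i}}\int_{\bx_i}\nabla p\,\dx$ and the face value of $\nabla p\cdot\vec n_{ij}$, I would let the data enter by inserting $\nabla p=\vec f+\nu\Delta\vec u$ \emph{inside the volume averages only}, leaving the face traces of $\nabla p$ untouched (this is what keeps the estimate at the level of $\norm{\vec f}_{L^2}$ rather than requiring $\vec f\in H^1$). This splits $s_B(p,\PtoBox q_h)$ into an $\vec f$-part, controlled by Cauchy--Schwarz after reorganizing the face sum box by box, and a residual pressure part, controlled through the continuity property \eqref{eq:rhie_chow_continuous} of the auxiliary operator after writing $p=p_B+(p-p_B)$ and using that $\nabla p_B$ is piecewise constant, so that only $\seminorm{p_B}_{H^1}$ survives. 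The mesh scalings $\text{D}_i^{-1}\simeq h_\triangle^{2-d}/N_{\bx_i}$, $\abs{\face_{ij}}\simeq h_\triangle^{d-1}$, $\abs{\bx_i}\simeq h_\triangle^d$ of Assumption \ref{ass:mesh_regularity}, the $*$-norm equivalences of Proposition \ref{prop:starnorm_properties}, and the relations $\seminorm{q_h}_{\triangle,*}\le\trinorm{\vec v_h,q_h}_{box}$, $\seminorm{q_h}_{\triangle,*}^2\simeq h^3\seminorm{q_h}_*^2$ are then used to collect the two contributions into $h^2\norm{\vec f}_{L^2}$ and $h\seminorm{p_B}_{H^1}$ against $\trinorm{\vec v_h,q_h}_{box}$.

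The reduction to $s_B(p,\PtoBox q_h)$ is essentially bookkeeping once the divergence theorem is applied box-wise, and I expect the main obstacle to be the second step: arranging the estimate so that the data appears only through $\norm{\vec f}_{L^2}$ — which dictates substituting the momentum equation inside the volume averages while preserving the face terms — and, above all, tracking the powers of $h$ through the many conversions between $\seminorm{\cdot}_*$, $\seminorm{\cdot}_{\triangle,*}$, $\norm{\PtoBox\cdot}_{L^2}$ and $\trinorm{\cdot}_{box}$ so that the stated rates are recovered rather than the cruder ones that a naive bound would give.
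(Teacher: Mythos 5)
Your first step is a correct identity, and in fact a cleaner piece of bookkeeping than the paper's: testing the exact solution box-wise (licit under the $H^2_{\text{loc}}$ regularity, since $-\nu\nabla\vec{u}+p\mathrm{I}$ is in $H(\mathrm{div})$) and summing the two equations of \eqref{eq:stokes:box} does collapse the difference to $s_B(p,\PtoBox q_h)$. But this is \emph{not} the paper's route, and it strands you where the stated bound cannot be reached. The paper never passes through $s_B(p,\PtoBox q_h)$: it adds and subtracts the non-tilde forms evaluated at the \emph{discrete} solution and, via Lemma \ref{lemma:bilforms_equivalence}, the weak problem \eqref{eq:stokes:weak} and the equations of \eqref{eq:stokes:box}, reduces the difference to four terms that are estimated separately: the data oscillation $\sum_{\triangle}(\vec{f},\vec{v}_h-\PtoBox\vec{v}_h)_\triangle\lesssim h^2\norm{\vec{f}}\seminorm{\vec{v}_h}_{H^1}$ (using $\int_\triangle(\vec{v}_h-\PtoBox\vec{v}_h)\dx=0$ and Poincar\`e); the gap $(b_B-\tilde{b}_B)(\PtoBox\vec{v}_h,p_B)\le h\seminorm{p_B}_{H^1}\seminorm{\vec{v}_h}_*$ (Taylor expansion at the face barycentres); the gap $(c_B-\tilde{c}_B)(\vec{u}_B,\PtoBox q_h)$, which vanishes identically; and $(s_B-\tilde{s}_B)(p_B,\PtoBox q_h)$, controlled by \eqref{eq:rhie_chow_consistency}. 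Each ingredient of the claimed right-hand side has a precise origin there: $h^2\norm{\vec{f}}_{L^2}$ is paired with $\seminorm{\vec{v}_h}_{H^1}$ and $h\seminorm{p_B}_{H^1}$ with $\seminorm{\vec{v}_h}_*$, i.e.\ with the \emph{velocity} test function. Your reduced quantity contains neither $\vec{v}_h$, nor $\vec{f}$, nor $p_B$, so none of these pairings is available to you (incidentally, the fact that your identity and the paper's intermediate decomposition cannot both be exact shows how genuinely different the two routes are).

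The concrete gap is therefore your second step. Since the statement must hold for every $\vec{v}_h$, take $\vec{v}_h=\vec{0}$: your reduction would force $s_B(p,\PtoBox q_h)\lesssim(h^2\norm{\vec{f}}_{L^2}+h\seminorm{p_B}_{H^1})\bigl(\norm{\PtoBox q_h}_{L^2}^2+\seminorm{q_h}_{\triangle,*}^2\bigr)^{1/2}$, and none of your devices produces this. Substituting $\nabla p=\vec{f}+\nu\Delta\vec{u}$ inside the volume averages only destroys exactly the cancellation between $s_B^1$ and $s_B^2$ that makes $s_B(p,\cdot)$ small; the isolated $\vec{f}$-part, bounded by Cauchy--Schwarz and the scalings of Assumption \ref{ass:mesh_regularity}, comes out as $h^2\norm{\vec{f}}_{L^2}\seminorm{q_h}_*\simeq h^{1/2}\norm{\vec{f}}_{L^2}\seminorm{q_h}_{\triangle,*}$, hence at best $h^{1/2}\norm{\vec{f}}_{L^2}\trinorm{\vec{v}_h,q_h}_{box}$ --- far short of $h^2$. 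The splitting $p=p_B+(p-p_B)$ fares no better: \eqref{eq:rhie_chow_continuous} applied to $p_B$ gives $h^{1/2}\seminorm{p_B}_{H^1}\seminorm{q_h}_{\triangle,*}$, not $h\seminorm{p_B}_{H^1}$, while the remainder $s_B(p-p_B,\PtoBox q_h)$ is precisely the kind of error this consistency lemma is supposed to feed into the convergence proof: bounding it either presupposes convergence (circular) or, via \eqref{eq:rhie_chow_continuity_cont}, produces $h^{3/2}\seminorm{\nabla p}_{h,1}$, a term absent from the statement. What your reduction honestly yields is $s_B(p,\PtoBox q_h)\lesssim h^{3/2}\seminorm{\nabla p}_{h,1}\seminorm{q_h}_{\triangle,*}$, a legitimate consistency estimate in terms of the exact solution's regularity, but not the inequality you were asked to prove.
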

The full proof of Lemma \ref{lemma:consistency} can be found in Appendix \ref{app:proofs}, proof \ref{proof:consistency}.

\begin{lemma}[Continuity of $\C_B$] \label{lemma:boundedness}
Let $(\vec{v},q)\in \bs{\V} \times (\Q \cap H^1(\domain))$ and $(\vec{v}_h,q_h)\in \bs{\V}_h \times \V_h$, then there holds:
\begin{equation*}
\C_B((\vec{v},q),(\PtoBox \vec{v}_h,\PtoBox q_h))\lesssim
\left(\seminorm{\vec{v}}_{H^1}
+ \norm{q}_{L^2} + h^{\frac{3}{2}} \seminorm{\nabla q}_{h,1}\right)\trinorm{\vec{v}_h,q_h}_{box}.
\end{equation*}
\end{lemma}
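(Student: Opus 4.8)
The plan is to use the additive definition \eqref{eq:quadriform_cont} and bound each of the four contributions separately, pairing each with one of the three factors on the right-hand side and with one component of $\trinorm{\vec{v}_h,q_h}_{box}$. Writing $\C_B((\vec{v},q),(\PtoBox\vec{v}_h,\PtoBox q_h)) = a_B(\vec{v},\PtoBox\vec{v}_h) + b_B(\PtoBox\vec{v}_h,q) + c_B(\vec{v},\PtoBox q_h) + s_B(q,\PtoBox q_h)$, I would estimate the four terms so as to extract respectively the factors $\seminorm{\vec{v}}_{H^1}$, $\norm{q}_{L^2}$, $\seminorm{\vec{v}}_{H^1}$ and $h^{3/2}\seminorm{\nabla q}_{h,1}$, each multiplied by one of $\seminorm{\vec{v}_h}_{H^1}$, $\norm{\PtoBox q_h}_{L^2}$ or $\seminorm{q_h}_{\triangle,*}$. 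Since each of these three quantities is dominated by $\trinorm{\vec{v}_h,q_h}_{box}$, summing the four bounds produces the claim. The key imported ingredients are the norm equivalences of Proposition \ref{prop:starnorm_properties}, the mesh regularity Assumption \ref{ass:mesh_regularity}, and --- crucially --- the already established continuity of the auxiliary Rhie--Chow operator \eqref{eq:rhie_chow_continuity_cont}.

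For $c_B$ I would use the divergence theorem on each box: since $\PtoBox q_h$ is constant on $\bx_i$, one has $c_B(\vec{v},\PtoBox q_h) = \sum_{\bx_i\in\B_h}\PtoBox q_i\int_{\partial\bx_i}\vec{n}_i\cdot\vec{v}\ds = \int_\domain (\PtoBox q_h)\,(\nabla\cdot\vec{v})\dx$, whence Cauchy--Schwarz gives directly $c_B(\vec{v},\PtoBox q_h)\lesssim\seminorm{\vec{v}}_{H^1}\norm{\PtoBox q_h}_{L^2}$. Similarly, the box-wise divergence theorem yields $b_B(\PtoBox\vec{v}_h,q) = \int_\domain(\PtoBox\vec{v}_h)\cdot\nabla q\dx$; replacing the lumped field $\PtoBox\vec{v}_h$ by $\vec{v}_h$ (the mass-lumping remainder being of higher order, cf.\ \eqref{eq:bilforms_equivalence}) turns this, up to boundary terms, into $-b(\vec{v}_h,q)$, which the continuity of $b$ bounds by $\norm{q}_{L^2}\seminorm{\vec{v}_h}_{H^1}$. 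The diffusive term $a_B(\vec{v},\PtoBox\vec{v}_h)$ I would collapse into a sum of face integrals of the normal derivative $\partial_{\vec{n}_{ij}}\vec{v}$ against the jumps $\jump{\PtoBox\vec{v}_h}_{ij}$; applying Cauchy--Schwarz, a trace (and inverse-trace, Lemma \ref{lemma:inverse_trace}) estimate, and the equivalence $\seminorm{\cdot}_*\simeq\seminorm{\cdot}_{H^1}$ of Proposition \ref{prop:starnorm_properties} gives $a_B(\vec{v},\PtoBox\vec{v}_h)\lesssim\seminorm{\vec{v}}_{H^1}\seminorm{\vec{v}_h}_{H^1}$.

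The stabilization term is the only genuinely delicate one, but its continuity has already been proven: taking $p=q$ in \eqref{eq:rhie_chow_continuity_cont} gives immediately $s_B(q,\PtoBox q_h)\lesssim h^{3/2}\seminorm{\nabla q}_{h,1}\seminorm{q_h}_{\triangle,*}$, which supplies both the $h^{3/2}\seminorm{\nabla q}_{h,1}$ factor and the $\seminorm{q_h}_{\triangle,*}$ component. Collecting the four estimates and bounding $\seminorm{\vec{v}_h}_{H^1},\,\norm{\PtoBox q_h}_{L^2},\,\seminorm{q_h}_{\triangle,*}\le\trinorm{\vec{v}_h,q_h}_{box}$ then yields the assertion.

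I expect the main obstacle to be the careful treatment of the infinite-dimensional (continuous) first argument $(\vec{v},q)$ in the classical forms $a_B,b_B,c_B$: these were introduced for discrete, piecewise-linear arguments, so the discrete face-difference formulas \eqref{eq:stokes:bilforms:discrete} and the FEM identities \eqref{eq:bilforms_equivalence} are not directly available and must be replaced by the boundary/divergence-theorem representations together with trace estimates on the box faces. In particular $a_B$ requires the normal-gradient trace of $\vec{v}$ on the faces, so the bound implicitly relies on the natural extra regularity of $\vec{v}$ available in the convergence application, and $b_B$ requires an integration by parts to trade $\nabla q$ for $\norm{q}_{L^2}$; in both cases one must verify that the resulting mass-lumping and trace remainders are of higher order and hence subsumed by the stated leading factors. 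By contrast, the hardest-looking term $s_B$ is essentially free, since its continuity is already established.
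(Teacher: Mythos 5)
Your overall skeleton coincides with the paper's proof (proof \ref{proof:boundedness}): the same four-term splitting of $\C_B$, the same box-wise divergence theorem plus Cauchy--Schwarz for $c_B(\vec{v},\PtoBox q_h)$, the same appeal to the already-established bound \eqref{eq:rhie_chow_continuity_cont} for $s_B$, and the same final absorption of $\seminorm{\vec{v}_h}_{H^1}$, $\norm{\PtoBox q_h}_{L^2}$, $\seminorm{q_h}_{\triangle,*}$ into $\trinorm{\vec{v}_h,q_h}_{box}$. Where you deviate is in the treatment of $a_B$ and $b_B$, and that is precisely where your argument has genuine gaps. The paper disposes of both terms in one line each via the FEM--BM equivalences of Lemma \ref{lemma:bilforms_equivalence}, writing $a_B(\vec{v},\PtoBox\vec{v}_h)=a(\vec{v},\vec{v}_h)$ and $b_B(\PtoBox\vec{v}_h,q)=b(\vec{v}_h,q)$, after which continuity of $a$ and $b$ gives exactly $\nu\seminorm{\vec{v}}_{H^1}\seminorm{\vec{v}_h}_{H^1}$ and $\sqrt{d}\seminorm{\vec{v}_h}_{H^1}\norm{q}_{L^2}$, with no remainder terms at all.

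Your substitutes do not close. For $b_B$: after the box-wise divergence theorem you must control the lumping remainder $\int_\domain(\vec{v}_h-\PtoBox\vec{v}_h)\cdot\nabla q\,\dx$, plus a boundary term $\int_{\partial\domain}q\,\vec{v}_h\cdot\vec{n}\,\ds$ that does not vanish, since neither $\vec{v}_h$ nor $q$ is constrained on $\Gamma$. Lemma \ref{lemma:maps_error} bounds the former only by $C\,h\,\seminorm{\vec{v}_h}_{H^1}\norm{\nabla q}_{L^2}$, and $h\norm{\nabla q}_{L^2}$ is \emph{not} dominated by the admissible right-hand-side factors $\norm{q}_{L^2}+h^{3/2}\seminorm{\nabla q}_{h,1}$ for a general $q\in\Q\cap H^1(\domain)$: the first-derivative quantity $\norm{\nabla q}_{L^2}$ cannot be interpolated between $\norm{q}_{L^2}$ and broken second derivatives without global $H^2$ regularity and a Gagliardo--Nirenberg argument that you do not supply. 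So ``the mass-lumping remainder is of higher order and subsumed'' is an assertion, not a proof, and it fails at the stated level of regularity. For $a_B$: your face-integral route requires $L^2(\face_{ij})$-traces of $\partial\vec{v}/\partial\vec{n}_{ij}$, i.e.\ regularity beyond the hypothesis $\vec{v}\in[H^1(\domain)]^d$ (you concede this), and even granting broken $H^2$ regularity, the trace inequality of Lemma \ref{lemma:inverse_trace} leaves an extra contribution of the type $h\,\seminorm{\nabla\vec{v}}_{h,1}\,\seminorm{\vec{v}_h}_{*}$ which is absent from the claimed estimate; you flag the issue but do not repair it. The repair is the paper's: invoke the equivalence identities so that both terms become volume integrals, bounded directly by the continuity of $a$ and $b$.
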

\begin{lemma}[Continuity of $\widetilde{\C}_B$] \label{lemma:boundedness_tilde}
Let $\vec{v}_h,\vec{w}_h\in \bs{\V}_h$ and $q_h,z_h\in \V_h$, then there holds:
\begin{equation*}
\widetilde{\C}_B((\vec{v}_h,q_h),(\PtoBox \vec{w}_h,\PtoBox z_h))\lesssim
\left(\seminorm{\vec{v}}_{H^1}
+ \norm{q_h}_{L^2} + h^{\frac{1}{2}} \seminorm{q_h}_{H^1}\right)\trinorm{\vec{w}_h,z_h}_{box}.
\end{equation*}
\end{lemma}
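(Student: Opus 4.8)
The plan is to bound directly the four constituent forms of
$\widetilde{\C}_B((\vec{v}_h,q_h),(\PtoBox\vec{w}_h,\PtoBox z_h))
= a_B(\vec{v}_h,\PtoBox\vec{w}_h) + \tilde{b}_B(\PtoBox\vec{w}_h,q_h)
+ \tilde{c}_B(\vec{v}_h,\PtoBox z_h) + \tilde{s}_B(q_h,\PtoBox z_h)$,
matching each summand to exactly one ingredient of the right-hand side, and at the end bounding every factor $\seminorm{\vec{w}_h}_{H^1}$, $\norm{\PtoBox z_h}_{L^2}$, $\seminorm{z_h}_{\triangle,*}$ by $\trinorm{\vec{w}_h,z_h}_{box}$. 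Two of the four terms are immediate. For the diffusive term, the equivalence \eqref{eq:bilforms_equivalence} gives $a_B(\vec{v}_h,\PtoBox\vec{w}_h)=a(\vec{v}_h,\vec{w}_h)$, so Cauchy--Schwarz on the $L^2$ inner product of gradients yields $a_B(\vec{v}_h,\PtoBox\vec{w}_h)\lesssim\seminorm{\vec{v}_h}_{H^1}\seminorm{\vec{w}_h}_{H^1}$, accounting for the $\seminorm{\vec{v}_h}_{H^1}$ piece. For the stabilization term I invoke the discrete Rhie--Chow continuity \eqref{eq:rhie_chow_continuous_dicrete} directly, which gives $\tilde{s}_B(q_h,\PtoBox z_h)\lesssim h^{1/2}\seminorm{q_h}_{H^1}\seminorm{z_h}_{\triangle,*}$, accounting for the $h^{1/2}\seminorm{q_h}_{H^1}$ piece. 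Note that here one must use the \emph{discrete} estimate, not the auxiliary continuity of $s_B$, since the broken seminorm $\seminorm{\nabla q_h}_{h,1}$ vanishes on piecewise linears while $\tilde s_B(q_h,\cdot)$ does not.

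The two coupling forms $\tilde b_B$ and $\tilde c_B$ require complementary manipulations, and this asymmetry is the conceptual heart of the argument. First I rewrite each box-indexed double sum as a single sum over faces, using summation by parts with the orientation identities $\vec{n}_{ji}=-\vec{n}_{ij}$, $w_{ji}=1-w_{ij}$, and the piecewise-linear face-centroid identity \eqref{eq:pl_equality_on_face_centre_0}, obtaining
$\tilde b_B(\PtoBox\vec{w}_h,q_h)=\sum_{\face_{ij}\in\F_h}\abs{\face_{ij}}\,q_h(\faceCentre_{ij})\,\vec{n}_{ij}\cdot\jump{\vec{w}_h}_{ij}$.
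Here the test velocity enters through the jump $\jump{\vec{w}_h}_{ij}$, so the natural pairing splits $\abs{\face_{ij}}=(\abs{\face_{ij}}d_{ij})^{1/2}(\abs{\face_{ij}}/d_{ij})^{1/2}$: by Property \ref{property:constant_on_diamond} the second factor becomes $\sum_{\face_{ij}}d_{ij}\int_{\face_{ij}}\abs{\pdd{\vec{w}_h}{}{\vec{n}_{ij}}}^2\ds=\seminorm{\vec{w}_h}_*^2\simeq\seminorm{\vec{w}_h}_{H^1}^2$ by Proposition \ref{prop:starnorm_properties} (applied component-wise), while the first factor, after bounding $q_h(\faceCentre_{ij})$ by the convex combination of the adjacent box values and using $\abs{\face_{ij}}d_{ij}\simeq\abs{\bx_i}$ and the bounded face count $N_\bx$ from Assumption \ref{ass:mesh_regularity}, is controlled by $\norm{\PtoBox q_h}_{L^2}^2\simeq\norm{q_h}_{L^2}^2$ (mass-lumping equivalence). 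This gives $\tilde b_B(\PtoBox\vec{w}_h,q_h)\lesssim\norm{q_h}_{L^2}\seminorm{\vec{w}_h}_{H^1}$.

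For $\tilde c_B$ the required pairing is reversed: we need $\seminorm{\vec{v}_h}_{H^1}$ from the velocity and the full norm $\norm{\PtoBox z_h}_{L^2}$ (not a seminorm) from the pressure, so the jump form is unsuitable. Instead I keep the box-indexed form $\tilde c_B(\vec{v}_h,\PtoBox z_h)=\sum_{\bx_i\in\B_h}\abs{\bx_i}\,\PtoBox z_i\,(\nabla_h\cdot\vec{v}_h)_i$, where $\abs{\bx_i}(\nabla_h\cdot\vec{v}_h)_i:=\sum_{\bx_j\in G_i}\abs{\face_{ij}}\vec{v}_h(\faceCentre_{ij})\cdot\vec{n}_{ij}$ is the Gauss--Green discrete divergence of $\vec{v}_h$ on $\bx_i$ (the divergence analogue of the gradient in Remark \ref{rem:gauss_green_grad}). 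Cauchy--Schwarz over boxes then isolates $\norm{\PtoBox z_h}_{L^2}$ as the first factor and reduces the claim to $\sum_{\bx_i}\abs{\bx_i}\abs{(\nabla_h\cdot\vec{v}_h)_i}^2\lesssim\seminorm{\vec{v}_h}_{H^1}^2$.

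I expect this last bound to be the main obstacle. It asserts that the face-centroid quadrature of the boundary flux $\int_{\partial\bx_i}\vec{v}_h\cdot\vec{n}_i\ds=\int_{\bx_i}\nabla\cdot\vec{v}_h\dx$ is controlled, box by box, by $\seminorm{\vec{v}_h}_{H^1(\bx_i)}$. The plan is to exploit that $\vec{v}_h$ is piecewise linear on each diamond (so the centroid rule is essentially exact on the relevant pieces and the flux involves only first derivatives of $\vec{v}_h$), together with Assumption \ref{ass:mesh_regularity} to absorb the geometric factors $\abs{\face_{ij}}/\abs{\bx_i}$ and to certify the scaling $\abs{\bx_i}\abs{(\nabla_h\cdot\vec{v}_h)_i}^2\simeq\int_{\bx_i}\abs{\nabla\vec{v}_h}^2\dx$; the only care needed is at boxes adjacent to $\partial\domain$, where part of $\partial\bx_i$ is omitted from the flux sum, but this affects only an upper bound and is harmless. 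With $\tilde c_B(\vec{v}_h,\PtoBox z_h)\lesssim\seminorm{\vec{v}_h}_{H^1}\norm{\PtoBox z_h}_{L^2}$ in hand, collecting the four estimates and dominating $\seminorm{\vec{w}_h}_{H^1}$, $\norm{\PtoBox z_h}_{L^2}$, $\seminorm{z_h}_{\triangle,*}$ by $\trinorm{\vec{w}_h,z_h}_{box}$ yields the claimed continuity.
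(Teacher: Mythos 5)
Your handling of $a_B$ (the equivalence \eqref{eq:bilforms_equivalence} plus Cauchy--Schwarz) and of $\tilde{s}_B$ (the discrete continuity bound \eqref{eq:rhie_chow_continuous_dicrete}) coincides with the paper's proof, and your side remark on why the auxiliary estimate \eqref{eq:rhie_chow_continuity_cont} cannot be used here --- $q_h\notin H^2_{\text{loc}}(\domain)$ and $\seminorm{\nabla q_h}_{h,1}=0$ for piecewise linears --- is correct. For $\tilde b_B$ you genuinely depart from the paper: the paper writes $\tilde b_B=b_B+(\tilde b_B-b_B)$, bounds $b_B$ via Lemma \ref{lemma:bilforms_equivalence} and the difference via the computation \eqref{eq:interp_discr_consistency}, whereas you estimate the face sum directly. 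Your route works and even gives the cleaner pairing $\norm{q_h}_{L^2}\seminorm{\vec{w}_h}_{H^1}$, with one caveat: the ``mass-lumping equivalence'' $\norm{\PtoBox q_h}_{L^2}\simeq\norm{q_h}_{L^2}$ is not stated anywhere in the paper, and it is not an equivalence (boxes are attached only to interior vertices, so the reverse inequality fails); you only need the one-sided bound, which follows e.g.\ from Lemma \ref{lemma:maps_error}, $\norm{\PtoBox q_h}_{L^2}\lesssim\norm{q_h}_{L^2}+h\seminorm{q_h}_{H^1}$, the extra term being absorbed by $h^{\frac{1}{2}}\seminorm{q_h}_{H^1}$ on the right-hand side.

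The genuine gap is in $\tilde c_B$. You correctly reduce the claim to the bound $\sum_{\bx_i\in\B_h}\abs{\bx_i}\abs{(\nabla_h\cdot\vec{v}_h)_i}^2\lesssim\seminorm{\vec{v}_h}_{H^1}^2$, flag it as the main obstacle, and then only sketch it, resting on the assertion that ``the centroid rule is essentially exact on the relevant pieces.'' That assertion is exactly where the difficulty lives, and as stated it is not right: a box face $\face_{ij}$ crosses several triangles, so $\vec{v}_h\cdot\vec{n}_{ij}$ is only \emph{piecewise} linear on $\face_{ij}$, and the one-point rule at $\faceCentre_{ij}$ is not exact face by face --- the error involves tangential derivatives of $\vec{v}_h\cdot\vec{n}_{ij}$, which jump across the triangle crossings inside the face. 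Moreover, the scaling you propose to certify, $\abs{\bx_i}\abs{(\nabla_h\cdot\vec{v}_h)_i}^2\simeq\int_{\bx_i}\abs{\nabla\vec{v}_h}^2\dx$, is false as a two-sided statement (the left side vanishes for discretely divergence-free fields); only the upper bound can hold, and it is the upper bound you need.

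The paper fills precisely this hole with the computation \eqref{eq:interp_discr_consistency}: expand $\vec{v}_h$ around $\faceCentre_{ij}$ via \eqref{eq:pl_taylor_exp_on_dmond}, use Property \ref{property:constant_on_diamond} (the derivative $\pdd{\vec{v}_h}{}{\vec{n}_{ij}}$ is one and the same constant on all pieces of the face, because $\vec{n}_{ij}$ is aligned with the Delaunay edge $d_{ij}$) to pull the derivative out of the face integral, and conclude with the vanishing first moment $\int_{\face_{ij}}(\vec{x}-\faceCentre_{ij})\ds=0$. This yields $(\tilde c_B-c_B)(\vec{v}_h,\PtoBox z_h)=0$, hence $\abs{\bx_i}(\nabla_h\cdot\vec{v}_h)_i=\int_{\bx_i}\nabla\cdot\vec{v}_h\dx$ exactly, after which your Cauchy--Schwarz step closes the argument (this is also how the paper bounds $c_B$ in proof \ref{proof:boundedness}). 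Alternatively, your quadrature-error idea can be salvaged without any exactness claim: bound each face error by $\abs{\face_{ij}}\,h_\triangle\max\abs{\nabla\vec{v}_h}$ over the triangles meeting $\dmond_{ij}$, convert the $L^\infty$ norm of the piecewise-constant gradient to an $L^2$ norm by an inverse inequality ($\lesssim h_\triangle^{-d/2}\norm{\nabla\vec{v}_h}_{L^2}$ locally), and sum using the bounded face count $N_\bx$ and finite overlap of the patches. Either way, as written the proposal leaves its self-declared main obstacle unresolved, so it is incomplete exactly at the point where the paper's proof does the real work.
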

The proofs of Lemmas \ref{lemma:boundedness} and \ref{lemma:boundedness_tilde} can be found in Appendix \ref{app:proofs}, proofs \ref{proof:boundedness} and \ref{proof:boundedness_tilde}, respectively.

\begin{lemma}[$\widetilde{\C}_B$ coercivity] \label{lemma:coercivity}
Let $(\vec{v}_h,q_h)\in \bs{\V}_h \times \V_h$, then there holds:
\begin{equation*}
\widetilde{\C}_B((\vec{v}_h,q_h),(\PtoBox \vec{v}_h,\PtoBox q_h))\geq
\nu\seminorm{\vec{v}_h}_{H^1}^2
+ \seminorm{q_h}^2_{\triangle,*}.
\end{equation*}
\end{lemma}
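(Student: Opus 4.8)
The plan is to test the compact form against its own first argument and treat the four resulting contributions separately: the diffusion term and the stabilization term will each produce one of the two squared norms on the right-hand side, while the two velocity--pressure coupling terms will cancel. Concretely, expanding the definition \eqref{eq:quadriform} gives
\[
\widetilde{\C}_B((\vec{v}_h,q_h),(\PtoBox \vec{v}_h,\PtoBox q_h)) =
a_B(\vec{v}_h,\PtoBox \vec{v}_h) + \tilde{b}_B(\PtoBox\vec{v}_h,q_h) + \tilde{c}_B(\vec{v}_h,\PtoBox q_h) + \tilde{s}_B(q_h,\PtoBox q_h).
\]
For the diffusion term I would use $a_B=\tilde{a}_B$ together with the FEM equivalence \eqref{eq:bilforms_equivalence} (Lemma \ref{lemma:bilforms_equivalence}), which gives $a_B(\vec{v}_h,\PtoBox\vec{v}_h)=a(\vec{v}_h,\vec{v}_h)=\nu\seminorm{\vec{v}_h}_{H^1}^2$ exactly. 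The stabilization term is handled at the end.

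The heart of the argument is the cancellation $\tilde{b}_B(\PtoBox\vec{v}_h,q_h)+\tilde{c}_B(\vec{v}_h,\PtoBox q_h)=0$, a discrete integration-by-parts identity. I would rewrite each double sum over ordered adjacent pairs $\sum_{\bx_i}\sum_{\bx_j\in G_i}$ as a sum over unordered faces, pairing the contributions of $(i,j)$ and $(j,i)$. Using $\abs{\face_{ij}}=\abs{\face_{ji}}$, $\vec{n}_{ji}=-\vec{n}_{ij}$ and the weight complementarity $w_{ji}=1-w_{ij}$ (so that the face-centre interpolants $w_{ij}\PtoBox q_i+(1-w_{ij})\PtoBox q_j$ and $w_{ij}\PtoBox\vec{v}_i+(1-w_{ij})\PtoBox\vec{v}_j$ are single-valued on each face, in the spirit of Property \ref{property:constant_on_diamond}), the two forms reduce to face sums of $\abs{\face_{ij}}(w_{ij}\PtoBox q_i+(1-w_{ij})\PtoBox q_j)\,\vec{n}_{ij}\cdot(\PtoBox\vec{v}_i-\PtoBox\vec{v}_j)$ and of $\abs{\face_{ij}}\,\vec{n}_{ij}\cdot(w_{ij}\PtoBox\vec{v}_i+(1-w_{ij})\PtoBox\vec{v}_j)(\PtoBox q_i-\PtoBox q_j)$, respectively. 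Adding them and collecting terms, the "telescoping" part $\sum_{\face_{ij}\in\F_h}\abs{\face_{ij}}\,\vec{n}_{ij}\cdot\big((\PtoBox q_i)(\PtoBox\vec{v}_i)-(\PtoBox q_j)(\PtoBox\vec{v}_j)\big)$ collapses, after returning to an ordered sum, to $\sum_{\bx_i}(\PtoBox q_i)\,\PtoBox\vec{v}_i\cdot\sum_{\bx_j\in G_i}\abs{\face_{ij}}\vec{n}_{ij}$, which vanishes since the outward face-normals of a closed box sum to zero. The residual is $\sum_{\face_{ij}\in\F_h}(2w_{ij}-1)\abs{\face_{ij}}(\PtoBox q_i-\PtoBox q_j)\,\vec{n}_{ij}\cdot(\PtoBox\vec{v}_i-\PtoBox\vec{v}_j)$, which vanishes on the orthogonal Voronoi dual mesh, where the face perpendicularly bisects the segment joining the two box centres and hence $w_{ij}=\tfrac12$.

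For the stabilization term I would invoke Conjecture \ref{lemma:rhiechow_coercivity}, $\tilde{s}_B(q_h,\PtoBox q_h)\gtrsim h^3\seminorm{q_h}_*^2$, together with the equivalence $\seminorm{q_h}_{\triangle,*}^2\simeq h^3\seminorm{q_h}_*^2$ stated just above that conjecture, to obtain $\tilde{s}_B(q_h,\PtoBox q_h)\gtrsim\seminorm{q_h}_{\triangle,*}^2$. Combining the three surviving contributions then yields $\widetilde{\C}_B((\vec{v}_h,q_h),(\PtoBox\vec{v}_h,\PtoBox q_h))\geq\nu\seminorm{\vec{v}_h}_{H^1}^2+\seminorm{q_h}_{\triangle,*}^2$ (with the implicit constant from the conjecture normalized into the definition of $\seminorm{\cdot}_{\triangle,*}$). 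I expect the main obstacle to be the exact cancellation of the coupling terms: it is the one step that is genuinely structural rather than a norm estimate, and it leans on both the closed-cell normal identity and the mesh symmetry $w_{ij}=\tfrac12$ guaranteed by the Voronoi construction. A secondary caveat is that, since the stabilization lower bound is only conjectural, the coercivity estimate is established conditionally on Conjecture \ref{lemma:rhiechow_coercivity}.
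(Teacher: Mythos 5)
Your proposal is correct and follows essentially the same route as the paper's proof: the exact identity $a_B(\vec{v}_h,\PtoBox\vec{v}_h)=\nu\seminorm{\vec{v}_h}_{H^1}^2$ via Lemma \ref{lemma:bilforms_equivalence}, the cancellation $\tilde{b}_B(\PtoBox\vec{v}_h,q_h)+\tilde{c}_B(\vec{v}_h,\PtoBox q_h)=0$ obtained by a discrete integration by parts that ends in the closed-box normal identity (the paper's equation \eqref{eq:cbt_bbt_equivalence}), and Conjecture \ref{lemma:rhiechow_coercivity} together with $\seminorm{q_h}_{\triangle,*}^2\simeq h^3\seminorm{q_h}_*^2$ for the stabilization term. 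The only cosmetic difference is that your algebra isolates the residual $(2w_{ij}-1)$-term and kills it with the Voronoi midpoint property $w_{ij}=\tfrac{1}{2}$, whereas the paper reaches the same cancellation implicitly through the product rule for normal derivatives evaluated at face centres.
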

The proof of Lemma \ref{lemma:coercivity} is reported in Appendix \ref{app:proofs} (c.f. proof \ref{proof:coercivity}) where we employ the numerical assessment the Conjecture \ref{lemma:rhiechow_coercivity}.

To conclude the well-posedness analysis, the last element we need is the \textit{inf-sup} stability of the Rhie-Chow stabilized problem whose validity has been conjectured in Conjecture \ref{lemma:rhiechow_inf_sup}.

\begin{theorem}\label{thm:stokes_well_posed}
$\widetilde{\C}_B$ satisfies the discrete \textit{inf-sup} condition:
let $\vec{u}_h,\vec{v}_h\in\bs{\V_h},\,p_h,q_h\in\V_h$, $\exists \gamma>0$, independent of $h$ s.t.
\begin{equation}
\gamma\trinorm{\vec{u}_h,p_h}_{box}\lesssim
\sup_{(\PtoBox \vec{v}_h,\PtoBox q_h)\in\bs{\V}_h\times\V_h}\cfrac{
\widetilde{\C}_B((\vec{u}_h,p_h),(\PtoBox \vec{v}_h,\PtoBox q_h))
}{
\trinorm{\vec{v}_h,q_h}_{box}
}.
\label{eq:ref_sup_0}
\end{equation}
\end{theorem}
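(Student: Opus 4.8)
The plan is to prove \eqref{eq:ref_sup_0} by the classical Boland--Nicolaides device: for a fixed pair $(\vec{u}_h,p_h)$ I would exhibit a \emph{single} test pair $(\vec{v}_h,q_h)$ for which the numerator is bounded below by $\trinorm{\vec{u}_h,p_h}_{box}^2$ while $\trinorm{\vec{v}_h,q_h}_{box}\lesssim\trinorm{\vec{u}_h,p_h}_{box}$; dividing then yields the claim with an $h$-independent $\gamma$. The two structural ingredients are the coercivity of Lemma \ref{lemma:coercivity} and the generalized \textit{inf-sup} of Conjecture \ref{lemma:rhiechow_inf_sup}. Testing first with the diagonal choice $(\PtoBox\vec{u}_h,\PtoBox p_h)$ and using the cancellation of the off-diagonal forms, $\tilde{b}_B(\PtoBox\vec{u}_h,p_h)+\tilde{c}_B(\vec{u}_h,\PtoBox p_h)=0$ (the antisymmetry underlying Lemma \ref{lemma:coercivity}), together with $a_B(\vec{u}_h,\PtoBox\vec{u}_h)=\nu\seminorm{\vec{u}_h}_{H^1}^2$, gives $\widetilde{\C}_B((\vec{u}_h,p_h),(\PtoBox\vec{u}_h,\PtoBox p_h))=\nu\seminorm{\vec{u}_h}_{H^1}^2+\tilde{s}_B(p_h,\PtoBox p_h)$. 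Through Conjecture \ref{lemma:rhiechow_coercivity} this already controls $\seminorm{\vec{u}_h}_{H^1}^2$ and $\seminorm{p_h}_{\triangle,*}^2$, so the only component of $\trinorm{\cdot}_{box}$ still missing is $\norm{\PtoBox p_h}_{L^2}$.

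To recover the pressure $L^2$-control I would use Conjecture \ref{lemma:rhiechow_inf_sup} to select a velocity witness $\vec{w}_h\in\bs{\V}_h$, normalised so that $\seminorm{\vec{w}_h}_*=\norm{\PtoBox p_h}_{L^2}$ and, after a sign flip (which leaves the supremum unchanged), so that $-\tilde{c}_B(\vec{w}_h,\PtoBox p_h)=S\,\norm{\PtoBox p_h}_{L^2}$ with $S:=\sup_{\vec{v}_h}\tilde{c}_B(\vec{v}_h,\PtoBox p_h)/\seminorm{\vec{v}_h}_*$. The conjecture then gives $-\tilde{c}_B(\vec{w}_h,\PtoBox p_h)\geq\beta_h\norm{\PtoBox p_h}_{L^2}^2-\norm{\PtoBox p_h}_{L^2}\,\tilde{s}_B(p_h,\PtoBox p_h)^{1/2}$. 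Testing now with $(\PtoBox\vec{w}_h,0)$ and using the same cancellation, $\widetilde{\C}_B((\vec{u}_h,p_h),(\PtoBox\vec{w}_h,0))=a_B(\vec{u}_h,\PtoBox\vec{w}_h)-\tilde{c}_B(\vec{w}_h,\PtoBox p_h)$, where $a_B(\vec{u}_h,\PtoBox\vec{w}_h)=a(\vec{u}_h,\vec{w}_h)$ by \eqref{eq:bilforms_equivalence} is bounded by $\nu C\,\seminorm{\vec{u}_h}_{H^1}\seminorm{\vec{w}_h}_*=\nu C\,\seminorm{\vec{u}_h}_{H^1}\norm{\PtoBox p_h}_{L^2}$ thanks to the norm equivalence of Proposition \ref{prop:starnorm_properties}.

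The final step is to test with the combination $(\PtoBox(\vec{u}_h+\delta\vec{w}_h),\PtoBox p_h)$ for a small parameter $\delta>0$ and add the two contributions by linearity. Estimating the two cross terms $\delta\seminorm{\vec{u}_h}_{H^1}\norm{\PtoBox p_h}_{L^2}$ and $\delta\norm{\PtoBox p_h}_{L^2}\tilde{s}_B(p_h,\PtoBox p_h)^{1/2}$ with Young's inequality, I would reach a lower bound of the form
\[
\tfrac{\nu}{2}\seminorm{\vec{u}_h}_{H^1}^2+\Bigl(1-\tfrac{\delta}{\beta_h}\Bigr)\tilde{s}_B(p_h,\PtoBox p_h)+\Bigl(\tfrac{3}{4}\delta\beta_h-\tfrac{1}{2}\delta^2\nu C^2\Bigr)\norm{\PtoBox p_h}_{L^2}^2.
\]
The decisive point---and the place where the \emph{generalized} (rather than classical) \textit{inf-sup} must be handled with care---is that the additive term $\tilde{s}_B(p_h,\PtoBox p_h)^{1/2}$ must \emph{not} be bounded from above, since no sharp $h$-uniform upper bound is available (continuity only yields the lossy $\tilde{s}_B(p_h,\PtoBox p_h)\lesssim h^{-1}\seminorm{p_h}_{\triangle,*}^2$). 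Instead the quantity $\tilde{s}_B(p_h,\PtoBox p_h)$ supplied by the diagonal test is retained, and only its fraction $\delta/\beta_h$ is absorbed---legitimate precisely because the \emph{same} quantity appears there with a plus sign. Choosing $\delta<\beta_h$ small enough that both the $\tilde{s}_B$ and the $\norm{\PtoBox p_h}_{L^2}^2$ coefficients remain positive, and then invoking $\tilde{s}_B(p_h,\PtoBox p_h)\geq\seminorm{p_h}_{\triangle,*}^2$ from Conjecture \ref{lemma:rhiechow_coercivity}, gives $\widetilde{\C}_B\gtrsim\trinorm{\vec{u}_h,p_h}_{box}^2$. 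A triangle inequality with $\seminorm{\vec{w}_h}_{H^1}\lesssim\seminorm{\vec{w}_h}_*=\norm{\PtoBox p_h}_{L^2}$ shows $\trinorm{\vec{u}_h+\delta\vec{w}_h,p_h}_{box}\lesssim\trinorm{\vec{u}_h,p_h}_{box}$, and dividing yields \eqref{eq:ref_sup_0}.

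The main obstacle is structural rather than computational: because $\delta$, and hence $\gamma$, must be independent of $h$, the whole argument stands or falls on the $h$-uniformity of $\beta_h$ asserted in Conjecture \ref{lemma:rhiechow_inf_sup}, while the coercivity term serves only to furnish the $\tilde{s}_B(p_h,\PtoBox p_h)$ that neutralises the awkward additive square-root contribution.
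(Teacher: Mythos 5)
Your proof is correct (conditional, exactly as the paper's is, on Conjectures \ref{lemma:rhiechow_coercivity} and \ref{lemma:rhiechow_inf_sup}), but it follows a genuinely different route. You build an explicit test pair $(\PtoBox(\vec{u}_h+\delta\vec{w}_h),\PtoBox p_h)$, with $\vec{w}_h$ a sign-flipped, normalized maximizer of $\tilde{c}_B(\cdot,\PtoBox p_h)/\seminorm{\cdot}_*$ (attained since $\bs{\V}_h$ is finite dimensional), and check that the numerator is $\gtrsim \trinorm{\vec{u}_h,p_h}_{box}^2$ while the denominator is $\lesssim \trinorm{\vec{u}_h,p_h}_{box}$. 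The paper never constructs a test function: it denotes by $\mathbb{S}$ the supremum in \eqref{eq:ref_sup_0}, inserts the identity $-\tilde{b}_B(\PtoBox\vec{v}_h,p_h)=a_B(\vec{u}_h,\PtoBox\vec{v}_h)-\widetilde{\C}_B((\vec{u}_h,p_h),(\PtoBox\vec{v}_h,0))$ into the supremum of Conjecture \ref{lemma:rhiechow_inf_sup}, bounds the three resulting contributions separately, and combines the outcome with the coercivity bound via Young's inequality. Both arguments share all the structural ingredients (the cancellation \eqref{eq:cbt_bbt_equivalence}, $a_B=a$ from Lemma \ref{lemma:bilforms_equivalence}, Proposition \ref{prop:starnorm_properties}, the diagonal coercivity test, and the two Conjectures), but they treat the additive term $\tilde{s}_B(p_h,\PtoBox p_h)^{1/2}$ differently, and this is where your version buys something. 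You retain the quantity $\tilde{s}_B(p_h,\PtoBox p_h)$ supplied by the diagonal test and absorb only the fraction $\delta/\beta_h$ of it, so you need no upper bound on $\tilde{s}_B$ whatsoever; your remark that continuity would be lossy, giving only $\tilde{s}_B(p_h,\PtoBox p_h)\lesssim h^{-1}\seminorm{p_h}_{\triangle,*}^2$, is accurate. The paper instead bounds that term from above, via the continuity estimate \eqref{eq:rhie_chow_continuous_dicrete} together with Proposition \ref{prop:starnorm_properties} and Lemma \ref{lemma:maps_error}, which generates the extra terms $\delta^2h^2\seminorm{p_h}^2_{H^1}+\delta^2\norm{p_h}^2_{L^2}$ that must then be reabsorbed into $\mathbb{S}$; this step is carried out rather loosely in the paper, whose assertion that ``$\mathbb{S}$ turns out to be equal to the supremum'' conflates the continuity upper bound of Lemma \ref{lemma:boundedness_tilde} with the supremum itself. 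Your argument is thus more self-contained and makes transparent that $\gamma$ depends only on $\nu$, the norm-equivalence constants, and the $h$-uniform lower bound for $\beta_h$ asserted in the Conjecture; what the paper's version buys in exchange is that it avoids selecting and normalizing a maximizer and never needs the linearity of $\widetilde{\C}_B$ in the test pair.
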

\begin{proof}
Employing Lemmas \ref{lemma:coercivity} and \ref{lemma:boundedness_tilde}, we have
\begin{equation}
\begin{aligned}
\seminorm{p_h}_{\triangle,*}^2
+\nu\seminorm{\vec{u}_h}_{H^1}^2 \leq&
\widetilde{\C}_B((\vec{u}_h, p_h),(\PtoBox \vec{v}_h, \PtoBox q_h)) \\
\lesssim& \left(\seminorm{\vec{u}_h}_{H^1} + \norm{p_h}_{L^2} + h^{\frac{1}{2}}\seminorm{p_h}_{H^1}
\right)\trinorm{\vec{v}_h,q_h}_{box}\\
=: & \mathbb{S}\trinorm{\vec{v}_h,q_h}_{box}. \\
\end{aligned}
\label{eq:wp_0}
\end{equation}
Thus $\mathbb{S}$ turns out to be equal to the supremum in equation \eqref{eq:ref_sup_0}.

Now employing the generalized \textit{inf-sup} (c.f. Conjecture \ref{lemma:rhiechow_inf_sup}) and equation \eqref{eq:cbt_bbt_equivalence}, we have
\begin{equation}
\begin{aligned}
\beta_h\norm{\PtoBox p_h}_{L^2} \leq & \sup_{\vec{v}_h\in \bs{\V}_h}
\cfrac{\tilde{c}_B(\vec{v}_h, \PtoBox q_h)}
{\seminorm{\vec{v}_h}_{*}}
+ \tilde{s}_B(p_h,\PtoBox p_h)^{\frac{1}{2}} \\
=& \sup_{\vec{v}_h\in \bs{\V}_h}
\cfrac{-\tilde{b}_B(\PtoBox\vec{v}_h, p_h)}
{\seminorm{\vec{v}_h}_{*}} + \tilde{s}_B(p_h,\PtoBox p_h)^{\frac{1}{2}}\\
=& \sup_{\vec{v}_h\in \bs{\V}_h}
\cfrac{a_B(\vec{u}_h,\PtoBox\vec{v}_h)
- \widetilde{\C}_B((\vec{u}_h,p_h),(\PtoBox\vec{v}_h, 0))
}{\seminorm{\vec{v}_h}_{*}}
+ \tilde{s}_B(p_h,\PtoBox p_h)^{\frac{1}{2}}\\
=& \sup_{\vec{v}_h\in \bs{\V}_h}
\cfrac{a_B(\vec{u}_h,\PtoBox\vec{v}_h)}{\seminorm{\vec{v}_h}_{*}}
+ \tilde{s}_B(p_h,\PtoBox p_h)^{\frac{1}{2}}
- \sup_{\vec{v}_h\in \bs{\V}_h}
\cfrac{\widetilde{\C}_B((\vec{u}_h,p_h),(\PtoBox\vec{v}_h, 0))}
{\trinorm{\vec{v}_h,0}_{box}} \\
=& (I) + (II) + (III).\\
\end{aligned}
\label{eq:wp:inf_sup_0}
\end{equation}
We bound separately each term.

By the continuity of $a_B$ (c.f. proof \ref{proof:boundedness} of Lemma \ref{lemma:boundedness}),
\begin{equation*}
(I) \leq
\cfrac{\nu\seminorm{\vec{u}_h}_{H^1}
\seminorm{\vec{v}_h}_*}{\seminorm{\vec{v}_h}_{*}}\leq
\nu\seminorm{\vec{u}_h}_*.
\end{equation*}
Employing the continuity of $\tilde{s}_B$ (c.f. inequality  \eqref{eq:rhie_chow_continuous_dicrete}), by Proposition \ref{prop:starnorm_properties} and Lemma \ref{lemma:maps_error}, together with equations \eqref{eq:starnorm} and \eqref{eq:starnorm_mesh_dep}, we have
\begin{equation*}
(II)^2 \lesssim h^{\frac{1}{2}}\seminorm{p_h}_{H^1}
\seminorm{p_h}_{\triangle,*}
\lesssim h^2\seminorm{p_h}_*^2 \lesssim h^2h_m^{-2}\norm{\PtoBox p_h}_{L^2}^2
\lesssim  \delta^2 \norm{\PtoBox p_h}_{L^2}^2
\lesssim \delta^2h^2 \seminorm{p_h}_{H^1}^2 + \delta^2\norm{p_h}_{L^2}^2
\end{equation*}
where we recall that $\delta=h/h_m$ (c.f. Assumption \ref{ass:mesh_regularity}).
Using equation \eqref{eq:wp_0},
\begin{equation*}
(III)
\lesssim \mathbb{S}\cfrac{\trinorm{\vec{v}_h,0}_{box}}{\trinorm{\vec{v}_h,0}_{box}}
=\mathbb{S}.
\end{equation*}

Collecting the above estimates and employing the definition of $\mathbb{S}$ together with the fact that we assume $h<1$ (thus giving $h^2\leq h$) we obtain

\begin{equation}\label{eq:wp_1}
\norm{\PtoBox p_h}_{L^2}^2 \lesssim \seminorm{\vec{u}_h}_*^2
+ \delta^2 h^2 \seminorm{p_h}^2_{H^1} + \delta^2 \norm{p_h}^2_{L^2}
+ \mathbb{S}^2 \lesssim \mathbb{S}^2
\end{equation}
which, in combination with definition \eqref{eq_norm}, equations \eqref{eq:wp_0} and \eqref{eq:wp_1} and the Young inequality with a suitable $\varepsilon>0$, yields
\begin{equation}
\begin{aligned}
\trinorm{\vec{u}_h,p_h}_{box}^2\lesssim&\seminorm{p_h}_{\triangle,*}^2 + \norm{\PtoBox p_h}_{L^2}^2
+\nu\seminorm{\vec{u}_h}_{H^1}^2\\
\lesssim& \mathbb{S}\trinorm{\vec{u}_h,p_h}_{box}
+ \mathbb{S}^2 \\
\lesssim & \cfrac{1}{2\varepsilon}\mathbb{S}^2
+ \cfrac{\varepsilon}{2}\trinorm{\vec{u}_h,p_h}_{box}^2 + \mathbb{S}^2.
\end{aligned}
\end{equation}
Bringing $\frac{\varepsilon}{2}\trinorm{\vec{u}_h,p_h}_{box}^2$ to the left-hand-side yields the thesis:
\begin{equation}
\trinorm{\vec{u}_h,p_h}_{box}\lesssim
\mathbb{S}.
\label{eq:bnb_condition}
\end{equation}

\end{proof}
The well-posedness of problem \eqref{eq:stokes:box} follows from Banach-Ne\v{c}as-Babu\v{s}ka (BNB) condition, or discrete \textit{inf-sup} condition \cite{BABUSKA1971, QUARTERONI, DIPIETRO2011}.

\subsection{Convergence analysis} \label{sec:convergence}
The convergence of the RCBM is guaranteed by the following theorem.
\begin{theorem}[Convergence] \label{thm_convergence}
Let $\domain$ be of class $C^2$ and let $\vec{f} \in [H^1(\domain)]^d$.
Let $(\vec{u},p) \in (\bs{\V}\cap[H^2(\domain)]^d)\times(\Q\cap H^1(\domain)\cap H^2_{\text{loc}}(\domain))$ the solution to problem \eqref{eq:stokes:weak}. Then
\begin{equation*}
\norm{\vec{u}_B - \vec{u}}_{H^1} + \norm{p_B - p}_{L^2} \lesssim h.
\end{equation*}
\end{theorem}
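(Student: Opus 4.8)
The plan is to run a Strang‑type argument with the discrete \textit{inf-sup} stability of Theorem \ref{thm:stokes_well_posed} as the engine. First I would pick interpolants $\vec{u}_I\in\bs{\V}_{h,\vec{g}_h}$ and $p_I\in\V_h$ of the exact pair (e.g. a Scott--Zhang or Lagrange interpolant, so that the Dirichlet data are matched), and split $\vec{u}_B-\vec{u}=(\vec{u}_B-\vec{u}_I)+(\vec{u}_I-\vec{u})$ and $p_B-p=(p_B-p_I)+(p_I-p)$. Using $\vec{u}\in[H^2(\domain)]^d$ and $p\in H^1(\domain)\cap H^2_{\mathrm{loc}}(\domain)$, standard interpolation estimates give $\seminorm{\vec{u}-\vec{u}_I}_{H^1}\lesssim h\seminorm{\vec{u}}_{H^2}$, $\norm{p-p_I}_{L^2}\lesssim h\seminorm{p}_{H^1}$ and $\seminorm{\nabla(p-p_I)}_{h,1}=\seminorm{\nabla p}_{h,1}\lesssim\seminorm{p}_{H^2_{\mathrm{loc}}}$, so the interpolation part of the error is already $\bigO{h}$, and it only remains to control the discrete error $(\vec{e}_u,e_p):=(\vec{u}_B-\vec{u}_I,p_B-p_I)$ in the $\trinorm{\cdot}_{box}$ norm.

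Applying Theorem \ref{thm:stokes_well_posed} to $(\vec{e}_u,e_p)$, I would bound $\gamma\trinorm{\vec{e}_u,e_p}_{box}$ by the supremum over test pairs of $\widetilde{\C}_B((\vec{e}_u,e_p),(\PtoBox\vec{v}_h,\PtoBox q_h))/\trinorm{\vec{v}_h,q_h}_{box}$. Writing $\cdot$ for the common test argument $(\PtoBox\vec{v}_h,\PtoBox q_h)$, the key step is to insert the exact solution through the consistent form $\C_B$:
\begin{equation*}
\begin{aligned}
\widetilde{\C}_B((\vec{e}_u,e_p),\cdot)
=&\ \big[\widetilde{\C}_B((\vec{u}_B,p_B),\cdot)-\C_B((\vec{u},p),\cdot)\big]\\
&+\C_B((\vec{u}-\vec{u}_I,p-p_I),\cdot)\\
&+\big[\C_B((\vec{u}_I,p_I),\cdot)-\widetilde{\C}_B((\vec{u}_I,p_I),\cdot)\big].
\end{aligned}
\end{equation*}
Since $\widetilde{\C}_B((\vec{u}_B,p_B),\cdot)=(\vec{f},\PtoBox\vec{v}_h)_\domain$ as $(\vec{u}_B,p_B)$ solves \eqref{eq:stokes:box}, the first bracket is exactly the consistency residual of Lemma \ref{lemma:consistency} and is $\lesssim(h^2\norm{\vec{f}}_{L^2}+h\seminorm{p_B}_{H^1})\trinorm{\cdot}_{box}$; the middle term is $\C_B$ evaluated at the interpolation error, handled by the continuity of $\C_B$ (Lemma \ref{lemma:boundedness}) together with the interpolation estimates above, giving $\bigO{h}\trinorm{\cdot}_{box}$; the last bracket, a difference of the continuous and discrete forms on the single discrete argument $(\vec{u}_I,p_I)$, reduces (since $a_B=\tilde{a}_B$) to the discretisation mismatches $b_B-\tilde{b}_B$, $c_B-\tilde{c}_B$ and the Rhie--Chow inconsistency $s_B-\tilde{s}_B$, each estimated as in the consistency/continuity analysis of Section \ref{sec:rhiechow:props}.

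Collecting the three contributions yields $\trinorm{\vec{e}_u,e_p}_{box}\lesssim h$ (modulo the term discussed below). I would then conclude with a second triangle inequality: $\trinorm{\cdot}_{box}$ controls $\seminorm{\vec{e}_u}_{H^1}$ and $\norm{\PtoBox e_p}_{L^2}$ directly, and to recover the genuine $L^2$ pressure error I would use $\norm{e_p}_{L^2}\le\norm{\PtoBox e_p}_{L^2}+\norm{e_p-\PtoBox e_p}_{L^2}$, bounding the lumping defect by Lemma \ref{lemma:maps_error} and Proposition \ref{prop:starnorm_properties}; adding back the $\bigO{h}$ interpolation errors (and invoking a Poincaré inequality, since $\vec{e}_u$ vanishes on $\partial\domain$, to pass from the $H^1$ seminorm to the full norm for the velocity) delivers $\norm{\vec{u}_B-\vec{u}}_{H^1}+\norm{p_B-p}_{L^2}\lesssim h$.

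The main obstacle is the term $h\seminorm{p_B}_{H^1}$ produced by the Rhie--Chow inconsistency in Lemma \ref{lemma:consistency}. The stabilization only controls the weak quantity $\seminorm{p_B}_{\triangle,*}\simeq h^{3/2}\seminorm{p_B}_{*}\simeq h^{3/2}\seminorm{p_B}_{H^1}$, so the a priori bound coming from stability is merely $\seminorm{p_B}_{H^1}\lesssim h^{-3/2}$, far from enough to make $h\seminorm{p_B}_{H^1}$ of order $h$. The natural fix is to split $\seminorm{p_B}_{H^1}\le\seminorm{p_I}_{H^1}+\seminorm{e_p}_{H^1}$ with $\seminorm{p_I}_{H^1}\lesssim\seminorm{p}_{H^1}=\bigO{1}$, and to estimate the discrete remainder via Proposition \ref{prop:starnorm_properties} and the quasi-uniformity $h\simeq h_m$ of Assumption \ref{ass:mesh_regularity} (so that $\delta=h/h_m=\bigO{1}$), giving $h\seminorm{e_p}_{H^1}\lesssim\delta\norm{\PtoBox e_p}_{L^2}\lesssim\trinorm{\vec{e}_u,e_p}_{box}$; this contribution must then be absorbed into the left-hand side of the stability inequality. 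Making this absorption rigorous — ensuring the constant multiplying $\trinorm{\vec{e}_u,e_p}_{box}$ stays strictly below the stability constant $\gamma$, so that a genuine $\bigO{h}$ bound survives — is the delicate point, and it is precisely where the interplay between the weak Rhie--Chow control of the pressure gradient and the mesh-regularity assumption becomes decisive.
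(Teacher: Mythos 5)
Your proof follows essentially the same route as the paper's. The paper likewise splits the error through the Lagrange interpolant (its $\vareps_B=I\vec{u}-\vec{u}_B$, $\eta_B=Ip-p_B$ are your $(-\vec{e}_u,-e_p)$), uses exactly your three-bracket decomposition of $\widetilde{\C}_B$ acting on the discrete error --- the consistency residual bounded by Lemma \ref{lemma:consistency}, the continuity of $\C_B$ (Lemma \ref{lemma:boundedness}) applied to the interpolation error, and the form mismatches $(\tilde{b}_B-b_B)$, $(\tilde{c}_B-c_B)$, $(\tilde{s}_B-s_B)$ evaluated at $(I\vec{u},Ip)$, handled via \eqref{eq:interp_discr_consistency} and \eqref{eq:rhie_chow_consistency} --- and then passes to the full $\trinorm{\cdot}_{box}$ norm through coercivity (Lemma \ref{lemma:coercivity}) combined with the BNB bound \eqref{eq:bnb_condition}, which is functionally equivalent to your single application of Theorem \ref{thm:stokes_well_posed}. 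Your observation that $\seminorm{\nabla(p-p_I)}_{h,1}=\seminorm{\nabla p}_{h,1}$, and the concluding triangle inequality with the lumping defect controlled by Lemma \ref{lemma:maps_error} and Proposition \ref{prop:starnorm_properties}, also appear verbatim in the paper.

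The gap is in your handling of the term $h\seminorm{p_B}_{H^1}$ produced by Lemma \ref{lemma:consistency}: you correctly identify it as the crux, but your proposed fix does not close. The inequality you would obtain has the form $\gamma\trinorm{\vec{e}_u,e_p}_{box}\le C_1 h + C_2\trinorm{\vec{e}_u,e_p}_{box}$, where $C_2$ is an order-one constant collecting the implicit constant of Lemma \ref{lemma:consistency}, the norm-equivalence constants of Proposition \ref{prop:starnorm_properties}, and the quasi-uniformity ratio $h/h_m$ of Assumption \ref{ass:mesh_regularity}; absorption requires $C_2<\gamma$, and there is no mechanism making $C_2$ small --- it carries no factor of $h$ and no adjustable Young parameter --- so for generic constants the inequality is vacuous, as you yourself concede. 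The paper does not attempt this absorption. In passing from its intermediate bound to the final constant $\mathbb{S}=\seminorm{\vareps}_{H^1}+\norm{\eta}_{L^2}+h$, it invokes the stability of the Lagrangian interpolant together with the continuity with respect to the data of the continuous and discrete solutions (Theorems \ref{thm:stokes_well_posed_0} and \ref{thm:stokes_well_posed}) to treat $\seminorm{Ip}_{H^1}$ and $\seminorm{p_B}_{H^1}$ as quantities bounded uniformly in $h$, so that $h\seminorm{p_B}_{H^1}=\bigO{h}$ outright and never re-enters the error norm. To repair your argument you should do the same: establish (or assume, as the paper effectively does) a uniform-in-$h$ a priori bound on $\seminorm{p_B}_{H^1}$ obtained independently of the error equation, rather than routing that term back through $\trinorm{\vec{e}_u,e_p}_{box}$ and hoping to absorb it.
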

\begin{proof}
Let us define $I\vec{u},Ip\in \bs{\V}_h\times\V_h$ be the Lagrangian linear interpolations of the exact solutions $\vec{u},p$, respectively. Define also $\vareps=\vec{u} - I\vec{u},\,\vareps_B=I\vec{u} - \vec{u}_B$ and $\eta=p - Ip,\,\eta_B=Ip - p_B$. Employing Lemma \ref{lemma:coercivity} we have
\begin{equation}
\begin{aligned}
\seminorm{\eta_B}_{\triangle,*}^2
+\nu\seminorm{\vareps_B}_{H^1}^2 \leq&
\widetilde{\C}_B((\vareps_B, \eta_B),(\PtoBox \vareps_B, \PtoBox \eta_B))
\pm b_B(\PtoBox\vareps_B,p) \\
&\pm c_B(\vec{u},\PtoBox\eta_B)\pm s_B(p,\PtoBox\eta_B)\\
=& \C_B((\vareps, \eta), (\PtoBox \vareps_B, \PtoBox \eta_B))
+ (\tilde{b}_B - b_B)(\PtoBox\vareps_B, Ip)\\
&+ (\tilde{c}_B - c_B)(I\vec{u}, \PtoBox\eta_B)
+ (\tilde{s}_B - s_B)(Ip, \PtoBox\eta_B)\\
&+ \C_B((\vec{u}, p),(\PtoBox \vareps_B, \PtoBox \eta_B))
- \widetilde{\C}_B((\vec{u}_B, p_B),(\PtoBox \vareps_B, \PtoBox \eta_B)).\\
\end{aligned}
\label{eq:coerc_estimate_0_first_passage}
\end{equation}

Now, using the continuity of $\C_B$ (Lemma \ref{lemma:boundedness}) on the first term, equation \eqref{eq:interp_discr_consistency} on second and third terms and consistency (Lemma \ref{lemma:consistency}) on the fourth and fifth terms, we get the following:
\begin{equation}
\begin{aligned}
\seminorm{\eta_B}_{\triangle,*}^2
+\nu\seminorm{\vareps_B}_{H^1}^2 \leq& \widetilde{\C}_B((\vareps_B, \eta_B),(\PtoBox \vareps_B, \PtoBox \eta_B))\\
\lesssim&\left(
\seminorm{\vareps}_{H^1} + \norm{\eta}_{L^2} + h\seminorm{\nabla\eta}_{h,1}
\right)\trinorm{\vareps_B,\eta_B}_{box}\\
&+ h\seminorm{Ip}_{H^1}\seminorm{\vareps_B}_{H^1}
+ \left(h^2 \norm{\vec{f}}_{L^2}
+ h\seminorm{p_B}_{H^1}
+ h^{\frac{3}{2}}\seminorm{\nabla p}_{h,1}\right)
\trinorm{\vareps_B,\eta_B}_{box}.
\label{eq:coerc_estimate_0}
\end{aligned}
\end{equation}
Observing that
\[\seminorm{\nabla\eta}_{h,1}^2=
\sum_{\triangle \in \Tau_h}\int_{\triangle} D_h^2( p - Ip)\dx
=\sum_{\triangle \in \Tau_h}\int_{\triangle} D^2p\dx
= \seminorm{\nabla p}_{h,1}^2,\]

where $D_h^2$ is the broken Hessian operator, equation \eqref{eq:coerc_estimate_0} becomes
\begin{equation}
\begin{aligned}
\seminorm{\eta_B}_{\triangle,*}^2
+\nu\seminorm{\vareps_B}_{H^1}^2
\lesssim& \left(\seminorm{\vareps}_{H^1} + \norm{\eta}_{L^2} + h\right)
\trinorm{\vareps_B,\eta_B}_{box}\\
=:& \,\mathbb{S}\,\trinorm{\vareps_B,\eta_B}_{box}
\label{eq:coerc_estimate_1}
\end{aligned}
\end{equation}
where, we employed the stability of the Lagrangian interpolant and the continuity with respect to data of the continuous and box solutions (c.f. Theorems \ref{thm:stokes_well_posed_0} and \ref{thm:stokes_well_posed}).

Now employing equation \eqref{eq:bnb_condition} for $\vareps_B$ and $\eta_B$, we have
\begin{equation}
\begin{aligned}
\cfrac{1}{2}\trinorm{\vareps_B,\eta_B}_{box}^2\lesssim&\,
\mathbb{S}^2=\left(\seminorm{\vareps}_{H^1} + \norm{\eta}_{L^2}
+ h\right)^2\\
=& \left(\seminorm{\vareps}_{H^1} + \norm{\eta}_{L^2}
+ h\right)^2.
\end{aligned}
\label{eq:almost_last_squared}
\end{equation}

To conclude the proof, the triangular inequality and Proposition \ref{prop:starnorm_properties} yield
\begin{equation}
\begin{aligned}
\seminorm{\vec{u}_B - \vec{u}}_{H^1} + \norm{p_B - p}_{L^2} \leq&
\seminorm{\vec{u}_B - I\vec{u}}_{H^1} + \norm{p_B - Ip}_{L^2} +
\seminorm{I\vec{u} - \vec{u}}_{H^1} + \norm{Ip - p}_{L^2} \\
\lesssim& \seminorm{\vareps_B}_{H^1} + \norm{\PtoBox\eta_B}_{L^2} +
\seminorm{I\vec{u} - \vec{u}}_{H^1} + \norm{Ip - p}_{L^2} + h\\
\lesssim& \seminorm{I\vec{u} - \vec{u}}_{H^1} + \norm{Ip - p}_{L^2} + \trinorm{\vareps_B, \eta_B}_{box} + h
\end{aligned}
\end{equation}
where we used Lemma \ref{lemma:maps_error} to get
\begin{equation}
\begin{aligned}
\norm{p_B - Ip}_{L^2} \leq&
\norm{p_B - \PtoBox p_B}_{L^2}
+ \norm{\PtoBox p_B - \PtoBox Ip}_{L^2}
+ \norm{Ip - \PtoBox Ip}_{L^2} \\
\lesssim& h\seminorm{p_B}_{H^1}
+ h\seminorm{Ip}_{H^1}
+ \norm{\PtoBox \eta_B}_{L^2}.
\end{aligned}
\end{equation}
Finally, employing inequality \eqref{eq:almost_last_squared}, we obtain the following estimate:
\begin{equation}
\seminorm{\vec{u}_B - \vec{u}}_{H^1} + \norm{p_B - p}_{L^2} \lesssim
\norm{I\vec{u}-\vec{u}}_{H^1} +
\norm{Ip-p}_{L^2}
+ h,
\label{eq:err:best_interpolation_estimate}
\end{equation}
that, using the interpolation estimates  and neglecting higher order terms in $h$ reads:
\begin{equation}
\begin{aligned}
\seminorm{\vec{u}_B - \vec{u}}_{H^1} + \norm{p_B - p}_{L^2} \lesssim&
h,
\end{aligned}
\label{eq:err:final_estimate}
\end{equation}
which is the desired estimate.
\end{proof}

\begin{remark} \label{rem:regularity_assum}
In Theorem \ref{thm_convergence} we used quite strong regularity assumptions: domain $\domain$ of class $C^2$ and the source term $\vec{f} \in [H^1(\domain)]^d$. This is because we need the pressure field to belong to $H^1(\domain)$ and also to $H^2_{\text{loc}}(\domain)$ \cite[Theorems IV.4.1, IV.6.1]{GALDI2011} to satisfy assumptions of Lemma \ref{lemma:consistency}. In particular, these assumptions are needed when dealing with the Rhie-Chow stabilization, that directly involves pressure gradient, indeed the regularity assumptions are employed when proving consistency, continuity and coercivity of the stabilization term (c.f. Lemmas \ref{lemma:consistency}, \ref{lemma:boundedness_tilde} and \ref{lemma:coercivity}).
\end{remark}

\section{Conclusions}

In this work, we considered the Rhie-Chow stabilized Box Method (RCBM) for the numerical approximation of the Stokes problem. In particular, the Rhie-Chow stabilization, a well-known stabilization technique for FVM, has been employed to stabilize the classical Box Method.
In the first part of the paper we provided a variational formulation of the RC stabilization and discussed the validity of crucial properties relevant for the well-posedeness and convergence of RCBM. Then we numerically explored the convergence properties of the RCBM on 2D and 3D test cases. Finally, in the last part of the paper, we theoretically justified the well posedeness of RCBM and the experimentally observed  convergence rates. To tackle this latter issue we built upon some assumptions, whose validity has been numerically explored.

The results contained in this work can be potentially extended to other relevant differential problems in the context of computational fluid dynamics, like the Navier-Stokes equations \cite{WEN2016} and the non-Newtonian Navier-Stokes equations, where the main difficulty is represented by the nonlinear nature of the problem. Moreover, the study of upwind-based schemes, that are more suitable for advection dominated problems, can be potentially faced in the present framework. Ultimately, the analysis of the Box method contained in this work sets a base workflow to theoretically deal, through the lens of the variational framework, with the Finite Volume methods actually implemented in OpenFOAM, the leading open-source software for CFD in industrial applications.

\section{Acknowledgements}
The research has been partially funded by Italian Ministry of Universities and Research (MUR) grant Dipartimento di Eccellenza 2023-2027.
G.N. acknowledges the financial support of Fondazione Politecnico.
N.P. and M.V. have been partially funded by and PRIN2020 n. \verb+20204LN5N5+\,\emph{``Advanced polyhedral discretisations of heterogeneous PDEs for multiphysics problems''}.
G.N., N.P. and M.V. are members of INdAM-GNCS.

\appendix


\section{Appendix}  \label{app:proofs_a}
Before reporting the proofs of some Lemmas of Section \ref{sec:wellposed}, we recall the following results.

The first is for the lumping map \eqref{eq:lumping_map}:
\begin{lemma} \label{lemma:maps_error}
Let $\triangle \in \Tau_h,\,\forall v_h \in \V_h$,
\begin{equation*}
\begin{aligned}
\int_\triangle v_h - \PtoBox v_h\dx=& 0,\\
\norm{v_h - \PtoBox v_h}_{L^2(\triangle)}
\leq& C h_\triangle \seminorm{v_h}_{H^1(\triangle)}, \\
\norm{\PtoBox v_h}_{L^2(\triangle)} \leq&
h \seminorm{p_h}_{H^1} + \norm{v_h}_{L^2(\triangle)},
\end{aligned}
\end{equation*}
\end{lemma}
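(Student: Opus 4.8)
Since the three assertions are stated element-wise, the plan is to fix an arbitrary $\triangle \in \Tau_h$ and prove all of them by working locally on $\triangle$. The one structural fact I would isolate at the outset is that the restriction of $\PtoBox v_h$ to $\triangle$ is piecewise constant, taking the nodal value $v_h(\vertex_k)$ on the sub-region $\triangle \cap \bx_k$ associated with each vertex $\vertex_k$ of $\triangle$, and that these sub-regions tile $\triangle$, so that $\sum_k \abs{\triangle \cap \bx_k} = \abs{\triangle}$. Everything else follows from the affine structure of $v_h$ on $\triangle$.

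For the first identity I would use that $v_h|_\triangle$ is affine, so that its integral is reproduced exactly by the vertex average, $\int_\triangle v_h \dx = \tfrac{\abs{\triangle}}{d+1}\sum_k v_h(\vertex_k)$, whereas by the structural observation $\int_\triangle \PtoBox v_h \dx = \sum_k v_h(\vertex_k)\abs{\triangle \cap \bx_k}$. Equating the two expressions for arbitrary nodal data reduces the claim to the purely geometric identity $\abs{\triangle \cap \bx_k} = \abs{\triangle}/(d+1)$ for every vertex $\vertex_k$, i.e. that the dual cells subdivide each primal simplex into $d+1$ pieces of equal volume. Verifying this equal-volume subdivision from the construction of the boxes (the dual boundary inside $\triangle$ running from the edge midpoints to the cell centre) is the step I expect to be the \emph{main obstacle}: it is the genuinely geometric ingredient, the one where the specific dual-mesh geometry enters, and the fact on which the mass-lumping character of the box method ultimately rests.

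The second inequality I would obtain by a direct pointwise bound, avoiding any quotient-space or scaling argument. On each piece $\triangle \cap \bx_k$ one has $v_h(\vec{x}) - \PtoBox v_h(\vec{x}) = v_h(\vec{x}) - v_h(\vertex_k) = \nabla v_h \cdot (\vec{x} - \vertex_k)$, since $\nabla v_h$ is constant on $\triangle$; as $\vec{x},\vertex_k \in \triangle$, a Cauchy--Schwarz step bounds this in modulus by $h_\triangle \abs{\nabla v_h}$. Squaring, integrating over $\triangle \cap \bx_k$, summing over the vertices and using $\sum_k \abs{\triangle \cap \bx_k} = \abs{\triangle}$ gives $\norm{v_h - \PtoBox v_h}_{L^2(\triangle)}^2 \le h_\triangle^2 \abs{\nabla v_h}^2 \abs{\triangle} = h_\triangle^2 \seminorm{v_h}_{H^1(\triangle)}^2$, which is the assertion with $C=1$. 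Note that this step invokes only the tiling property, not the equal-volume one.

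Finally, the third bound follows with no further work from the triangle inequality $\norm{\PtoBox v_h}_{L^2(\triangle)} \le \norm{\PtoBox v_h - v_h}_{L^2(\triangle)} + \norm{v_h}_{L^2(\triangle)}$, combined with the second inequality together with $h_\triangle \le h$ and $\seminorm{v_h}_{H^1(\triangle)} \le \seminorm{v_h}_{H^1}$, yielding $\norm{\PtoBox v_h}_{L^2(\triangle)} \le h\,\seminorm{v_h}_{H^1} + \norm{v_h}_{L^2(\triangle)}$. Thus the only substantive content lies in the equal-volume subdivision underlying the first identity; the remaining two inequalities are elementary consequences of the fact that $v_h$ is affine on each element.
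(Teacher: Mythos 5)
Your handling of the second and third assertions is sound. The pointwise bound $\abs{v_h(\vec{x})-v_h(\vertex_k)}=\abs{\nabla v_h\cdot(\vec{x}-\vertex_k)}\le h_\triangle\abs{\nabla v_h}$ on each piece $\triangle\cap \bx_k$, followed by squaring and summing over the tiling, proves the second inequality (even with $C=1$), and your derivation of the third from the second via the triangle inequality is exactly the argument the paper gives. Note that the paper itself proves \emph{only} the third inequality, outsourcing the first two to \cite{QUARTERONI2011}, Eqs.\ (2.11)--(2.12), so for the second inequality your proof is more self-contained than the paper's.

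The genuine gap is precisely where you flagged it, and it is not merely an unfinished verification: the equal-volume identity $\abs{\triangle\cap \bx_k}=\abs{\triangle}/(d+1)$ is \emph{false} for the dual mesh of this paper. Here the boxes are Voronoi cells built from \emph{circumcentres}, so the dual skeleton inside $\triangle$ joins the edge midpoints to the circumcentre, not to the barycentre. Take the right triangle with vertices $(0,0)$, $(1,0)$, $(0,1)$: its circumcentre is the hypotenuse midpoint $(1/2,1/2)$, and the three pieces have areas $1/4$, $1/8$, $1/8$ rather than $1/6$ each. Since your reduction is actually an equivalence (vertex quadrature is exact for affine functions, so $\int_\triangle (v_h-\PtoBox v_h)\dx=0$ for all $v_h\in\V_h$ if and only if all pieces have equal measure), the route cannot be completed for circumcentre-based duals: on that triangle, taking $v_h$ to be the hat function of the right-angle vertex gives $\int_\triangle v_h\dx=1/6$ while $\int_\triangle \PtoBox v_h\dx=1/4$. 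The equal-volume subdivision, and with it the mean-value identity, is a property of the barycentric (median) dual mesh, which is the construction used in the reference \cite{QUARTERONI2011} that the paper cites; your argument would go through verbatim in that setting. As it stands, though, your plan's key geometric ingredient does not hold for the Voronoi dual of a general Delaunay triangulation (it does, e.g., when every triangle is equilateral, where circumcentre and barycentre coincide), so the proof of the first identity is not only incomplete but unrepairable along this line --- and your own reduction in fact exposes that the lemma's first claim is delicate for the mesh construction adopted in this paper.
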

\begin{proof}
Given the results \cite[Equations (2.11) and (2.12)]{QUARTERONI2011}, we only have to prove the third inequality. By the first inequality fo the former Lemma, we have
\begin{equation*}
\begin{aligned}
\norm{\PtoBox v_h}_{L^2(\triangle)} \leq&
\norm{\PtoBox v_h - v_h}_{L^2(\triangle)} + \norm{v_h}_{L^2(\triangle)} \\
\leq& \sqrt{
    \sum_{i=1}^3 \int_{\triangle_i}
    \left(\PtoBox v_h\vert_{\triangle_i} - v_h\vert_{\triangle_i}\right)^2 \dx
} + \norm{v_h}_{L^2(\triangle)} \\
\leq& \sqrt{
    \sum_{i=1}^3 h^2\int_{\triangle_i}
    \abs{\nabla v_h}^2 \dx
} + \norm{v_h}_{L^2(\triangle)} \\
=& h \seminorm{p_h}_{H^1} + \norm{v_h}_{L^2(\triangle)}.
\end{aligned}
\end{equation*}
\end{proof}

Notice that, by relationship \eqref{eq:lumping_map}, bilinear forms \eqref{eq:stokes:bilforms} satisfy the following lemma:
\begin{lemma} \label{lemma:bilforms_equivalence}
$\forall  \vec{u}_h,\vec{v}_h \in \bs{\V}_h,\, p_h,q_h \in \V_h,\,$ the following hold
\begin{equation*}
\begin{aligned}
a_B(\vec{u}_h,\PtoBox\vec{v}_h) =& a(\vec{u}_h,\vec{v}_h), \\
b_B(\PtoBox\vec{v}_h,p_h) =& b(\vec{v}_h,p_h), \\
c_B(\vec{u}_h,\PtoBox q_h) =& -b(\vec{u}_h,q_h).
\end{aligned}
\end{equation*}
\end{lemma}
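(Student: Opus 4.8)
The plan is to exploit the single structural feature behind all three identities: once a lumped function $\PtoBox\cdot$ is inserted as a test argument, each box-boundary integral becomes, via the divergence theorem, a box-volume integral of a quantity that is piecewise constant on $\Tau_h$; and the lumping map reproduces such integrals exactly, in the sense of the first identity of Lemma \ref{lemma:maps_error}. For the first identity I reduce to the scalar case (the Laplacian acts componentwise); the other two I treat directly in vector form.

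For $a_B=a$ I would argue geometrically. Since $\PtoBox v_h$ equals the constant $v_h(\vertex_i)$ on $\bx_i$, one has $a_B(u_h,\PtoBox v_h)=-\sum_i v_h(\vertex_i)\int_{\partial\bx_i}\nu\,\nabla u_h\cdot\vec{n}_i\,\ds$, so by linearity it suffices to prove, for each interior vertex $\vertex_i$, that $-\int_{\partial\bx_i}\nu\,\nabla u_h\cdot\vec{n}_i\,\ds=a(u_h,\varphi_i)$ with $\varphi_i$ the scalar hat at $\vertex_i$. I split this over the triangles $\triangle$ meeting $\vertex_i$; on each the gradient $\nabla u_h$ is constant, so the flux equals $\nu\,\nabla u_h\cdot\int_{\partial\bx_i\cap\triangle}\vec{n}_i\,\ds$. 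Applying the divergence theorem to the closed polygon $\bx_i\cap\triangle$ (whose remaining sides lie on the two edges of $\triangle$ incident to $\vertex_i$, at half their length) and using the edge-closure relation $\sum_{\text{edges}}\abs{e}\,\vec{n}_e=\vec{0}$, I obtain the purely geometric identity $-\int_{\partial\bx_i\cap\triangle}\vec{n}_i\,\ds=\abs{\triangle}\,\nabla\varphi_i\vert_\triangle$; the position of the circumcentre cancels, so the identity is insensitive to the particular dual. Contracting with the constant $\nu\nabla u_h$ and summing over $\triangle$ recovers $\int\nu\nabla u_h\cdot\nabla\varphi_i\,\dx=a(u_h,\varphi_i)$.

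The third identity comes out cleanest. Pulling out the constant $\PtoBox q_h=q_h(\vertex_i)$ and using the divergence theorem, $c_B(\vec{u}_h,\PtoBox q_h)=\sum_i q_h(\vertex_i)\int_{\bx_i}\nabla\cdot\vec{u}_h\,\dx$. As $\nabla\cdot\vec{u}_h$ is piecewise constant on $\Tau_h$, I regroup the sum over elements to get $\sum_\triangle(\nabla\cdot\vec{u}_h)\vert_\triangle\int_\triangle\PtoBox q_h\,\dx$, and the integral-exactness of the lumping map (Lemma \ref{lemma:maps_error}) replaces $\int_\triangle\PtoBox q_h$ by $\int_\triangle q_h$, yielding $\int_\domain(\nabla\cdot\vec{u}_h)q_h\,\dx=-b(\vec{u}_h,q_h)$ with no integration by parts. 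The second identity uses the same two moves: $b_B(\PtoBox\vec{v}_h,p_h)=\sum_i\vec{v}_h(\vertex_i)\cdot\int_{\bx_i}\nabla p_h\,\dx=\sum_\triangle(\nabla p_h)\vert_\triangle\cdot\int_\triangle\vec{v}_h\,\dx=\int_\domain\vec{v}_h\cdot\nabla p_h\,\dx$, after which one integrates by parts to reach $b(\vec{v}_h,p_h)=-\int_\domain(\nabla\cdot\vec{v}_h)p_h\,\dx$.

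The step I expect to require the most care is exactly that last integration by parts, together with the boundary bookkeeping shared by all three identities: the boxes are built only around interior vertices, so $\bigcup_i\bx_i$ does not exhaust $\domain$ and the element-by-element regrouping is exact only on triangles all of whose vertices are interior. The uncovered boundary strip should produce precisely the term $\int_{\partial\domain}(\vec{v}_h\cdot\vec{n})p_h\,\ds$ created by the integration by parts, and the argument is completed by checking that these cancel, consistently with the treatment of the boundary data $g_h$ and the fact that the identities are applied against admissible test functions. The interior contributions, in contrast, match term by term through the integral-exactness of $\PtoBox$, which is the true linchpin of the whole argument.
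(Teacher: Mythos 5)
Your treatment of the third identity is exactly the paper's own proof: pull out the box-wise constant $\PtoBox q_h$, apply the divergence theorem on each box, regroup the box integrals into element integrals (where $\nabla\cdot\vec{u}_h$ is constant), and invoke the first identity of Lemma \ref{lemma:maps_error} to trade $\PtoBox q_h$ for $q_h$, with no integration by parts. Where you genuinely differ is on the first two identities: the paper does not prove them at all, it cites \cite[Lemma 3.2]{QUARTERONI2011}, whereas you prove them from scratch. Your geometric argument for $a_B=a$ is sound and standard: the flux identity $-\int_{\partial\bx_i\cap\triangle}\vec{n}_i\ds=\abs{\triangle}\,\nabla\varphi_i\vert_\triangle$ follows from the divergence theorem on $\bx_i\cap\triangle$ together with edge closure, and, as you observe, the integral of the normal over the two dual segments depends only on their endpoints (the edge midpoints), so the circumcentre's position is immaterial. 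This buys a self-contained proof of a step the paper outsources to the literature.

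The gap is in your completion of the second identity. After reaching $b_B(\PtoBox\vec{v}_h,p_h)=\int_\domain\vec{v}_h\cdot\nabla p_h\dx$ you integrate by parts and must dispose of $\int_{\partial\domain}(\vec{v}_h\cdot\vec{n})\,p_h\ds$; you propose that this cancels against a contribution from the uncovered boundary strip. No such cancellation can occur: by definition \eqref{eq:lumping_map} the lumping map expands over interior vertices only, so $\PtoBox\vec{v}_h$ vanishes identically on the strip and the strip contributes nothing to $b_B$ --- there is no second term available to cancel the boundary integral. What is actually true is that $b_B(\PtoBox\vec{v}_h,p_h)$ sees only the interior-vertex values of $\vec{v}_h$, i.e. it equals $b(\vec{v}_h^0,p_h)$ where $\vec{v}_h^0$ denotes $\vec{v}_h$ with its boundary-vertex values set to zero; for $\vec{v}_h$ not vanishing on $\partial\domain$ the claimed identity is simply false (take $\vec{v}_h$ supported at boundary vertices: the left-hand side is zero while the right-hand side generally is not). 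The identity must therefore be read --- as the paper implicitly does through \eqref{eq:lumping_map}, and as \cite{QUARTERONI2011} does explicitly --- on velocity test functions vanishing on $\partial\domain$, in which case the offending boundary integral is zero outright and your element-by-element regrouping is exact on every triangle. The same tacit restriction is needed in your proofs of the first and third identities (and, for that matter, in the paper's own regrouping step), so the correct fix is global: state and use the lemma for test functions in $\bs{\V}_{h,0}$, respectively for $q_h$ understood through its interior-vertex expansion, rather than hunting for a cancellation that is not there.
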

\begin{proof}
By \cite[Lemma 3.2]{QUARTERONI2011}, we just need to prove the last equality. We apply Lemma \ref{lemma:maps_error}:
\begin{equation}
\begin{aligned}
c_B(\vec{u}_h,\PtoBox q_h) =&
 \sum_{\bx \in \B_h} \PtoBox q_h
\int_{\partial \bx} \vec{u}_h \cdot \vec{n}_b\ds \\
=& \sum_{\bx \in \B_h}
\int_{\bx} \vec{u}_h \cdot \nabla\PtoBox q_h\dx
+ \sum_{\bx \in \B_h}
\int_{\bx} \nabla \cdot \vec{u}_h \PtoBox q_h \dx\\
=& \sum_{\triangle \in \Tau_h} \sum_{i=1}^3 \int_{\triangle_i}
\nabla \cdot \vec{u}_h \PtoBox q_h\dx \\
=& \sum_{\triangle \in \Tau_h} \sum_{i=1}^3 \int_{\triangle_i}
\nabla \cdot \vec{u}_h \left(\PtoBox q_h - q_h\right)\dx
+ \sum_{\triangle \in \Tau_h} \sum_{i=1}^3 \int_{\triangle_i}
\nabla \cdot \vec{u}_h q_h\dx \\
=& \sum_{\triangle \in \Tau_h} \int_{\triangle}
\nabla \cdot \vec{u}_h q_h \dx= -b(\vec{u}_h,q_h)
\end{aligned}
\end{equation}
where $\triangle_i,i=1,2,3$, are the partitions of the triangle $\triangle$ formed by the faces of intersecting boxes (see Figure \ref{fig:tri_and_dual_mesh}).
\end{proof}

Then we report two results from \cite{BORSUK2006, HOUSTON2017}.
\begin{lemma}[Poincar\`e inequality]  \label{lemma:poincare_on_smooth}
Let $S \subset \mathbb{R}^d$ be a bounded convex domain and let $\phi\in H^1(S)$, then
\begin{equation*}
\norm{\phi - \cfrac{1}{\abs{S}}\int_S \phi\dx}_{L^2(S)} \leq C_d \text{diam}(S)\norm{\nabla \phi}_{L^2(G)}.
\end{equation*}
\end{lemma}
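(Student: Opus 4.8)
The statement is the classical Poincar\'e--Wirtinger inequality on a convex set, and I would establish it by the elementary representation-formula technique. First I would reduce to $\phi \in C^1(\overline{S})$ by density of smooth functions in $H^1(S)$, and write $\bar\phi := \frac{1}{\abs{S}}\int_S \phi\,\d{y}$. Then $\phi(x) - \bar\phi = \frac{1}{\abs{S}}\int_S \left(\phi(x) - \phi(y)\right)\d{y}$, and since $S$ is convex the whole segment joining $x$ to $y$ stays in $S$; the fundamental theorem of calculus along it gives $\abs{\phi(x) - \phi(y)} \le \int_0^{\abs{x-y}}\abs{\nabla\phi(x + r\omega)}\,\d{r}$ with $\omega := (y-x)/\abs{y-x}$. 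This reduces everything to controlling an average of directional derivatives.

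Next I would pass to polar coordinates $y = x + \rho\omega$ centred at $x$, with $\omega$ on the unit sphere and $\d{y} = \rho^{d-1}\d{\rho}\,\d{\omega}$, and apply Fubini over the region $0 \le r \le \rho \le \rho_{\max}(\omega) \le \text{diam}(S)$. The inner radial integral $\int_r^{\rho_{\max}(\omega)}\rho^{d-1}\d{\rho}$ is bounded by $\text{diam}(S)^d/d$, and converting the remaining directional integral back into a volume integral reinstates the weight $\abs{x-y}^{1-d}$, yielding the Riesz-potential estimate
\begin{equation*}
\abs{\phi(x) - \bar\phi} \;\le\; \frac{\text{diam}(S)^d}{d\,\abs{S}}\int_S \frac{\abs{\nabla\phi(y)}}{\abs{x-y}^{d-1}}\,\d{y}.
\end{equation*}
This is the crucial intermediate inequality (it is Lemma~7.16 of Gilbarg--Trudinger).

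Finally I would take the $L^2(S)$-norm in $x$. For $x,y \in S$ one has $\abs{x-y}\le\text{diam}(S)$, so the convolution kernel can be taken as $\abs{z}^{1-d}$ restricted to $\abs{z}\le\text{diam}(S)$; this kernel is in $L^1$ with $\int_{\abs{z}\le\text{diam}(S)}\abs{z}^{1-d}\,\d{z}$ equal to a purely dimensional constant times $\text{diam}(S)$. Young's convolution inequality (equivalently, Schur's test, since the kernel integrates to the same bound in either variable) then gives $\norm{\int_S \abs{x-y}^{1-d}\abs{\nabla\phi(y)}\,\d{y}}_{L^2(S)} \lesssim \text{diam}(S)\norm{\nabla\phi}_{L^2(S)}$. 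Combined with the previous display this produces a bound of the form $C\,\frac{\text{diam}(S)^{d+1}}{\abs{S}}\norm{\nabla\phi}_{L^2(S)}$.

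The main obstacle is to reduce this to the claimed shape-independent form $C_d\,\text{diam}(S)\norm{\nabla\phi}_{L^2(S)}$, where the constant depends on the dimension alone. The elementary argument above carries the extra geometric factor $\text{diam}(S)^d/\abs{S}$, which is harmless for the shape-regular boxes and diamonds used in this paper (there $\abs{S}\simeq\text{diam}(S)^d$ by Assumption~\ref{ass:mesh_regularity}, so the factor collapses to a dimensional constant) but is genuinely unbounded for arbitrarily thin convex sets. To obtain the clean constant for a general convex $S$ I would instead invoke the sharp Payne--Weinberger bound $\mu_1(S) \ge \pi^2/\text{diam}(S)^2$ for the first nonzero Neumann eigenvalue of the Laplacian, together with the variational characterisation $\norm{\phi - \bar\phi}_{L^2(S)}^2 \le \mu_1(S)^{-1}\norm{\nabla\phi}_{L^2(S)}^2$; this even yields $C_d = 1/\pi$, independent of $d$, and is essentially the route of the cited references.
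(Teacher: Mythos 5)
Your proposal is correct, but there is nothing in the paper to compare it against: the paper does not prove this lemma at all, it simply reports it as a known result from the cited references \cite{BORSUK2006, HOUSTON2017}. Your argument is therefore a genuine self-contained substitute, and it is carried out soundly. The representation-formula stage (reduction to $C^1(\overline{S})$ by density, the fundamental theorem of calculus along segments, polar coordinates and Fubini to reach the Riesz-potential bound of Gilbarg--Trudinger Lemma~7.16, then Young's inequality for the truncated kernel $\abs{z}^{1-d}\chi_{\abs{z}\le \text{diam}(S)}$) is correct, and you are right to flag that it only yields the constant $C\,\text{diam}(S)^{d+1}/\abs{S}$, which carries an eccentricity factor that is unbounded over arbitrary convex sets and so does not match the stated $C_d\,\text{diam}(S)$. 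Your two resolutions are both apt: for the paper's actual use of the lemma the elementary bound already suffices, since it is only ever applied on boxes $\bx_i$ (convex Voronoi cells) and triangles, which satisfy $\abs{S}\simeq \text{diam}(S)^d$ under Assumption~\ref{ass:mesh_regularity}, so the extra factor collapses to a dimensional constant; and for the general statement the appeal to the Payne--Weinberger spectral bound $\mu_1(S)\ge \pi^2/\text{diam}(S)^2$ together with the variational characterisation of the first nonzero Neumann eigenvalue gives the clean constant $1/\pi$ (one could remark that the original Payne--Weinberger proof for $d\ge 2$ had a gap later repaired by Bebendorf, but invoking the result as known is legitimate, and is essentially what the paper itself does by citation). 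As a minor point, the norm $\norm{\nabla\phi}_{L^2(G)}$ on the right-hand side of the paper's statement is evidently a typo for $\norm{\nabla\phi}_{L^2(S)}$, which is the form you prove.
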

\begin{lemma}[Inverse trace inequality]  \label{lemma:inverse_trace}
Let $\triangle$ be a polyhedron and $F$ be one of its faces and let
$\phi_h \in \polyspace^p(\triangle),\,\phi \in H^1(\triangle)$, then
\begin{equation*}
\begin{aligned}
\norm{\phi_h}_{L^2(F)} \leq&
C_{\text{inv}} \sqrt{\cfrac{\abs{F}}{\abs{\triangle}}}\norm{\phi_h}_{L^2(\triangle)},\\
\norm{\phi}_{L^2(F)} \leq&
C_{\text{inv}}\left(
    h_\triangle^{-1}\norm{\phi}_{L^2(\triangle)}
  + h_\triangle\seminorm{\phi}_{H^1(\triangle)}
\right).
\end{aligned}
\end{equation*}
\end{lemma}

Finally, consider the following property:
\begin{proposition}\label{prop:sumconversion}
Let $\phi \in H^1(\domain)$ and $q_h \in \V_h$, then there holds:
\begin{equation}
\begin{aligned}
\sum_{\bx_i\in \B_h} \sum_{\bx_j\in G_i}\int_{\face_{ij}} \phi \vec{n}_{ij} \PtoBox q_i\ds =&
\sum_{\face_{ij} \in \F_h} \int_{\face_{ij}} \phi \vec{n}_{ij}
\jump{\PtoBox q_h}_{ij}\ds, \\
\end{aligned}
\end{equation}
where $\jump{\PtoBox q_h}_{ij} = \PtoBox q_i - \PtoBox q_{j}$.
\end{proposition}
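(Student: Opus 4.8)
The plan is to prove this purely by reindexing, converting the double sum over boxes and their neighbours into a single sum over faces. The essential fact is that each internal face $\face_{ij}$ is shared by exactly two boxes, $\bx_i$ and $\bx_j$, so it is visited exactly twice by the double sum $\sum_{\bx_i \in \B_h}\sum_{\bx_j \in G_i}$: once through the ordered pair $(i,j)$ and once through $(j,i)$. Since the boxes in $\B_h$ are associated only with interior vertices and $G_i$ collects precisely the boxes sharing a face with $\bx_i$, every term in the left-hand double sum corresponds to an internal face, and no boundary contributions arise. Thus the left-hand side can be reorganised as a sum over faces, each face carrying the two contributions coming from its two adjacent boxes.

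First I would group the two visits of a fixed face $\face_{ij}$. The contribution coming from box $\bx_i$ (neighbour $\bx_j$) is $\int_{\face_{ij}} \phi\, \vec{n}_{ij}\, \PtoBox q_i \ds$, while the contribution coming from box $\bx_j$ (neighbour $\bx_i$) uses the outward normal of $\bx_j$ across the same face. Since the two outward unit normals are opposite, $\vec{n}_{ji} = -\vec{n}_{ij}$, this second contribution equals $\int_{\face_{ij}} \phi\, \vec{n}_{ji}\, \PtoBox q_j \ds = -\int_{\face_{ij}} \phi\, \vec{n}_{ij}\, \PtoBox q_j \ds$. Adding the two contributions for each face then yields
\[
\int_{\face_{ij}} \phi\, \vec{n}_{ij}\, \PtoBox q_i \ds - \int_{\face_{ij}} \phi\, \vec{n}_{ij}\, \PtoBox q_j \ds
= \int_{\face_{ij}} \phi\, \vec{n}_{ij}\, \jump{\PtoBox q_h}_{ij} \ds,
\]
which is exactly the summand appearing on the right. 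Summing over all $\face_{ij} \in \F_h$ gives the claimed identity.

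The argument is essentially bookkeeping, so there is no genuine analytic obstacle; the only point requiring care is the orientation convention. Concretely, I would verify (i) the two-to-one correspondence between ordered box-pairs $(i,j)$ and oriented faces, and (ii) the antisymmetry $\vec{n}_{ji}=-\vec{n}_{ij}$, which together produce the jump on the right. I would also remark that the product $\vec{n}_{ij}\,\jump{\PtoBox q_h}_{ij}$ is invariant under the relabelling $i \leftrightarrow j$, since both factors change sign; hence the right-hand side is well defined once each face is listed once in $\F_h$ with an arbitrary but fixed choice of which adjacent box plays the role of $i$. This invariance is the structural reason the reindexing succeeds, and it is the single step I would make fully explicit to avoid sign errors.
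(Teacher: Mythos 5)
Your proof is correct and is essentially the paper's own argument: both regroup the double sum over boxes into a single sum over faces, pairing the two visits $(i,j)$ and $(j,i)$ of each internal face and using $\vec{n}_{ji}=-\vec{n}_{ij}$ to produce the jump $\jump{\PtoBox q_h}_{ij}$. Your additional remark on the invariance of $\vec{n}_{ij}\,\jump{\PtoBox q_h}_{ij}$ under relabelling $i\leftrightarrow j$ is a sound clarification but not a different method.
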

\begin{proof}
For any face we have two contributions from two different values of the basis functions. Passing from box summation to face summation we obtain the proof:
\begin{equation}
\begin{aligned}
\sum_{\bx_i\in \B_h} \sum_{\bx_j\in G_i}
\int_{\face_{ij}} \phi \vec{n}_{ij} \PtoBox q_i \ds=&
\sum_{\face_{ij} \in \F_h} \int_{\face_{ij}}
\phi \vec{n}_{ij} \PtoBox q_i
+ \phi \vec{n}_{ji} \PtoBox q_j\ds \\
=&\sum_{\face_{ij} \in \F_h} \int_{\face_{ij}}
\phi \vec{n}_{ij} \PtoBox q_i
- \phi \vec{n}_{ij} \PtoBox q_j\ds \\
=&\sum_{\face_{ij} \in \F_h} \int_{\face_{ij}}
\phi \vec{n}_{ij} \left(\PtoBox q_i - \PtoBox q_j\right)\ds. \\
\end{aligned}
\end{equation}
\end{proof}

\begin{delayedproof}[Proof of Proposition \ref{prop:starnorm_properties}] \label{proof:star_norm}
\begin{enumerate}
\item By properties \cite[Equations (2.11) and (2.12)]{QUARTERONI2011} for the lumping map \eqref{eq:lumping_map} and by \cite[Lemma 5.1]{COUDIERE1999}, that states that
$\forall q_h \in \V_h,\, \exists C > 0:$
\begin{equation}
\norm{\PtoBox q_h}_{L^2} \leq C \seminorm{q_h}_*.
\label{eq:star_poincare}
\end{equation}

\item $\forall q_h \in \V_h,\, \exists C > 0:$
\begin{equation*}
\begin{aligned}
\seminorm{q_h}_* =& \left(\sum_{\face_{ij} \in \F_h}
d_{ij} \int_{\face_{ij}}
\abs{\cfrac{\PtoBox q_i - \PtoBox q_j}{d_{ij}}}^2\ds\right)^{\frac{1}{2}}\\
=& \left(\sum_{\face_{ij} \in \F_h}
d\int_{\dmond_{ij}}\abs{\pdd{q_h}{}{\vec{n}_{ij}}}^2\ds\right)^{\frac{1}{2}} \leq
\sqrt{d}\seminorm{q_h}_{H^1}
\end{aligned}
\end{equation*}
where $d$ is the dimension of the space $\Real^d$ and where we used the fact that
\[\abs{\dmond_{ij}}=\abs{\cfrac{\face_{ij}d_{ij}}{d}}.\]

This holds because, for piecewise linear functions, the face-centred finite difference between box centres values coincides with the face normal gradient of the function itself.

\item $\forall q_h \in \V_h,$ by equivalence \eqref{lemma:bilforms_equivalence} and by Proposition \ref{prop:sumconversion}, it holds:
\begin{equation}
\begin{aligned}
\seminorm{q_h}_{H^1}^2 =& \int_\domain \nabla q_h \cdot \nabla q_h \dx \\
=& \sum_{\bx_i \in \B_h} \sum_{\bx_j\in G_i} \int_{\face_{ij}}
\nabla q_h \cdot \vec{n}_{ij} \PtoBox q_i \ds \\
=& \sum_{\face_{ij}\in \F_h} \int_{\face_{ij}}
\nabla q_h \cdot \vec{n}_{ij} \jump{\PtoBox q_h}_{ij} \ds. \\
\end{aligned}
\end{equation}
Now, by the Cauchy-Schwarz inequality
\begin{equation}
\begin{aligned}
\seminorm{q_h}_{H^1}^2 =&
\sum_{\face_{ij}\in \F_h} \int_{\face_{ij}}
\nabla q_h \cdot \vec{n}_{ij} \jump{\PtoBox q_h}_{ij} \ds \\
\leq& \sum_{\face_{ij}\in \F_h} \left(\int_{\face_{ij}}
\abs{\nabla q_h \cdot \vec{n}_{ij}}^2 \ds\right)^{\frac{1}{2}}
\left(\int_{\face_{ij}} \jump{\PtoBox q_h}_{ij}^2 \ds\right)^{\frac{1}{2}} \\
=& \sum_{\face_{ij}\in \F_h} d_{ij}\left(\int_{\face_{ij}}
\abs{\nabla q_h \cdot \vec{n}_{ij}}^2 \ds\right)^{\frac{1}{2}}
\left(\int_{\face_{ij}}d_{ij}
\abs{\cfrac{\jump{\PtoBox q_h}_{ij}}{d_{ij}}}^2 \ds\right)^{\frac{1}{2}} \\
\leq& \sum_{\face_{ij}\in \F_h} \left(\int_{\dmond_{ij}}
\abs{\nabla q_h}^2 \ds\right)^{\frac{1}{2}}
\left(d_{ij}\int_{\face_{ij}}
\abs{\cfrac{\jump{\PtoBox q_h}_{ij}}{d_{ij}}}^2 \ds\right)^{\frac{1}{2}} \\
\leq& \left(\sum_{\face_{ij}\in \F_h} \int_{\dmond_{ij}}
\abs{\nabla q_h}^2 \ds\right)^{\frac{1}{2}}
\left(\sum_{\face_{ij}\in \F_h} d_{ij}\int_{\face_{ij}}
\abs{\cfrac{\jump{\PtoBox q_h}_{ij}}{d_{ij}}}^2 \ds\right)^{\frac{1}{2}} \\
=&\seminorm{q_h}_{H^1}\seminorm{q_h}_*
\end{aligned}
\label{eq:star_norm:2_prop}
\end{equation}
where we used the fact that $\nabla q_h \cdot \vec{n}_{ij}$ is constant on diamond $\dmond_{ij}$ and, in the last passage, we employed the H\"older inequality. To conclude the proof divide by $\seminorm{q_h}_{H^1}$ on both sides of equation \eqref{eq:star_norm:2_prop}.

\item By Lemma \ref{lemma:poincare_on_smooth},
\begin{equation}
\begin{aligned}
\norm{q_h}_{L^2}^2 =& \sum_{\triangle\in\Tau_h}\norm{q_h}_{L^2(\triangle)}^2 \\
\lesssim& \sum_{\triangle\in\Tau_h}
\norm{q_h - \cfrac{1}{\abs{\triangle}}\int_\triangle q_h\dx}_{L^2(\triangle)}^2 + \norm{\cfrac{1}{\abs{\triangle}}\int_\triangle q_h\dx}_{L^2(\triangle)}^2 \\
\lesssim& \sum_{\triangle\in\Tau_h}
h_\triangle^2\seminorm{q_h}_{H^1(\triangle)}^2
+ \norm{\PtoBox q_h}_{L^2(\triangle)}^2 \\
\leq& h^2 \seminorm{q_h}_{H^1}^2 + \norm{\PtoBox q_h}_{L^2}^2 \\
\leq& (1 + h^2)\seminorm{q_h}_*^2.
\end{aligned}
\end{equation}

\item By triangular inequality,
\begin{equation*}
\begin{aligned}
\seminorm{q_h}_* =& \left(\sum_{\face_{ij} \in \F_h}
d_{ij}\int_{\face_{ij}}
\abs{\cfrac{\PtoBox q_i - \PtoBox q_j}{d_{ij}}}^2\ds\right)^{\frac{1}{2}}\\
\leq& \left(\sum_{\face_{ij} \in \F_h}
d_{ij}\int_{\face_{ij}}
\abs{\cfrac{\PtoBox q_i}{d_{ij}}}^2 + \abs{\cfrac{\PtoBox q_j}{d_{ij}}}^2\ds\right)^{\frac{1}{2}}\\
\leq& \cfrac{1}{d_{ij}}\left(\sum_{\face_{ij} \in \F_h}
d_{ij}\int_{\face_{ij}}
\abs{\PtoBox q_i}^2 + \abs{\PtoBox q_j}^2\ds\right)^{\frac{1}{2}}\\
\leq&\cfrac{1}{\min\limits_{\triangle \in \Tau_h} h_\triangle}\left(\sum_{\face_{ij} \in \F_h}
d_{ij}\int_{\face_{ij}}\abs{\PtoBox q_i}^2
+\abs{\PtoBox q_j}^2\ds\right)^{\frac{1}{2}} \\
\leq& 2h_m^{-1}\norm{\PtoBox q_h}_{L^2}
\end{aligned}
\end{equation*}
where $h_m = \min_\triangle h_\triangle$.
\end{enumerate}
\end{delayedproof}

\section{Appendix}  \label{app:proofs}
\begin{delayedproof}[Proof of Lemma \ref{lemma:consistency}] \label{proof:consistency}

Consider the definitions of $\C_B$ and $\widetilde{\C}_B$. By Lemma \ref{lemma:bilforms_equivalence} together with the Cauchy-Schwarz inequality and Lemmas \ref{lemma:maps_error} and \ref{lemma:poincare_on_smooth}, we get
\begin{equation}
\begin{aligned}
\C_B((\vec{u},p),&(\PtoBox \vec{v}_h,\PtoBox q_h))-
\widetilde{\C}_B((\vec{u}_B,p_B),(\PtoBox \vec{v}_h,\PtoBox q_h)) \\
=& a_B(\vec{u}-\vec{u}_B,\PtoBox \vec{v}_h)
+ b_B(\PtoBox\vec{v}_h,p) \pm b_B(\PtoBox\vec{v}_h,p_B)
- \tilde{b}_B(\PtoBox\vec{v}_h,p_B) \\
&+ c_B(\vec{u},\PtoBox q_h) \pm c_B(\PtoBox\vec{v}_h,p_B)
- \tilde{c}_B(p_B,\PtoBox q_h) \\
&+ s_B(\PtoBox q_h,p) \pm s_B(\PtoBox q_h,p_B)
- \tilde{s}_B(\PtoBox q_h,p_B) \\
=& \sum_{\triangle \in \Tau_h} \left[(\vec{f},\vec{v}_h)_\triangle
- (\vec{f},\PtoBox \vec{v}_h)_\triangle\right]
+ (b_B-\tilde{b}_B)(\PtoBox\vec{v}_h,p_B) \\
&+ (c_B-\tilde{c}_B)(\vec{u}_B, \PtoBox q_h)
+(s_B - \tilde{s}_B)(p_B,\PtoBox q_h) \\
=& \sum_{\triangle \in \Tau_h}(\vec{f}, \vec{v} - \PtoBox\vec{v}_h)_\triangle
+ (b_B-\tilde{b}_B)(\PtoBox\vec{v}_h,p_B) \\
&+ (c_B-\tilde{c}_B)(\vec{u}_B, \PtoBox q_h)
+(s_B - \tilde{s}_B)(p_B,\PtoBox q_h) \\
=& \sum_{\triangle \in \Tau_h}\left(\vec{f} - \cfrac{1}{\abs{\triangle}}\int_\triangle \vec{f}\dx,
\vec{v} - \PtoBox\vec{v}_h\right)_\triangle
+ (b_B-\tilde{b}_B)(\PtoBox\vec{v}_h,p_B) \\
&+ (c_B-\tilde{c}_B)(\vec{u}_B, \PtoBox q_h)
+(s_B - \tilde{s}_B)(p_B,\PtoBox q_h) \\
\leq& C_d h^2\norm{\vec{f}}_{L^2} \seminorm{\vec{v}_h}_{H^1}
+ (b_B-\tilde{b}_B)(\PtoBox\vec{v}_h,p_B)\\
&+ (c_B-\tilde{c}_B)(\vec{u}_B, \PtoBox q_h)
+ (s_B - \tilde{s}_B)(p_B,\PtoBox q_h).
\label{eq:consistency_0}
\end{aligned}
\end{equation}

We have now to estimate the last three terms. We first use Proposition \ref{prop:sumconversion} to sum the integrals over mesh faces. Consider the face barycentres $\faceCentre_{ij}$ and notice that, being $p_B$ and $\vec{u}_B$ piecewise linear, the following hold:
\begin{equation}
w_{ij}\PtoBox p_i + (1-w_{ij})\PtoBox p_j = p_B(\faceCentre_{ij}),
\quad\text{and}\quad
w_{ij}\PtoBox \vec{u}_i + (1-w_{ij})\PtoBox \vec{u}_j
= \vec{u}_B(\faceCentre_{ij})
\label{eq:pl_equality_on_face_centre}
\end{equation}
where we employed notation \eqref{eq:notation_ptobox}. Let now $\vec{x}$ be a point on the face $\face_{ij}$. As $p_B$ and $\vec{u}_B$ are  piecewise linear, using a Taylor expansion around $\faceCentre_{ij}$ we have:
\begin{equation}
\begin{aligned}
p_B(\vec{x}) =& p_B(\faceCentre_{ij}) + \sum_{\triangle \in \Tau_h: \triangle\cap\dmond_{ij}\neq\emptyset}
(\vec{x} - \faceCentre_{ij})\cdot\nabla p_B
\indicator{\triangle\cap\dmond_{ij}}, \\
\vec{u}_B(\vec{x}) =& \vec{u}_B(\faceCentre_{ij}) +
\sum_{\triangle \in \Tau_h: \triangle\cap\dmond_{ij}\neq\emptyset}
(\vec{x} - \faceCentre_{ij})^\intercal\nabla \vec{u}_B
\indicator{\triangle\cap\dmond_{ij}}, \\
\end{aligned}
\label{eq:pl_taylor_exp_on_dmond}
\end{equation}
where $\indicator{}$ is the indicator function and $\nabla p_B$ is piecewise constant on each intersection between triangle $\triangle$ and diamond $\dmond_{ij}$ (c.f. Figure \ref{fig:scheme_dual_geom}).

Employing now equations \eqref{eq:pl_equality_on_face_centre} and \eqref{eq:pl_taylor_exp_on_dmond} and Proposition \ref{prop:sumconversion}, we obtain that
\begin{equation}
\begin{aligned}
(b_B -\tilde{b}_B)(\PtoBox\vec{v}_h,p_B) =&
\sum_{\bx_i \in \B_h}\sum_{\bx_j\in G_i} \int_{\face_{ij}}
\left[w_{ij}\PtoBox p_i + (1-w_{ij})\PtoBox p_j - p_B\right]
\PtoBox\vec{v}_h \cdot \vec{n}_{ij}\ds\\
=& -\sum_{\face_{ij}\in \F_h} \cfrac{d_{ij}}{d_{ij}}\int_{\face_{ij}}
\left((\vec{x}-\faceCentre_{ij})\cdot\nabla p_B\right)
\left(\jump{\PtoBox\vec{v}_h}_{ij}
\cdot \vec{n}_{ij}\right) \ds\\
=& -\sum_{\face_{ij}\in \F_h} d_{ij}\int_{\face_{ij}}
\left((\vec{x}-\faceCentre_{ij})\cdot\nabla p_B\right)
\left(\pdd{\vec{v}_h}{}{\vec{n}_{ij}}
\cdot \vec{n}_{ij}\right)\ds\\
\leq& \sum_{\face_{ij}\in \F_h} \left(
    d_{ij}\int_{\face_{ij}}
    \abs{\vec{x}-\faceCentre_{ij}}^2\abs{\nabla p_B}^2\ds
\right)^{\frac{1}{2}}
\left(
    d_{ij}\int_{\face_{ij}}
    \abs{\pdd{\vec{v}_h}{}{\vec{n}_{ij}}}^2
    \abs{\vec{n}_{ij}}^2\ds
\right)^{\frac{1}{2}} \\
\leq& \sum_{\face_{ij}\in \F_h} \left(d_{ij}^2
    d_{ij}\int_{\face_{ij}}\abs{\nabla p_B}^2\ds
\right)^{\frac{1}{2}}
\left(
    d_{ij}\int_{\face_{ij}}
    \abs{\pdd{\vec{v}_h}{}{\vec{n}_{ij}}}^2\ds
\right)^{\frac{1}{2}} \\
\leq& \left(\sum_{\face_{ij}\in \F_h}  d_{ij}^2
    d_{ij}\int_{\face_{ij}}\abs{\nabla p_B}^2\ds
\right)^{\frac{1}{2}}
\left(\sum_{\face_{ij}\in \F_h}
    d_{ij}\int_{\face_{ij}}
    \abs{\pdd{\vec{v}_h}{}{\vec{n}_{ij}}}^2\ds
\right)^{\frac{1}{2}} \\
\leq&h\seminorm{p_B}_{H^1}\seminorm{\vec{v}_h}_{*}. \\
(c_B -\tilde{c}_B)(\vec{u}_B,\PtoBox q_h) =&
\sum_{\bx_i \in \B_h}\sum_{\bx_j\in G_i} \int_{\face_{ij}}
\left[w_{ij}\PtoBox \vec{u}_i + (1-w_{ij})\PtoBox \vec{u}_j - \vec{u}_B\right]
\cdot\vec{n}_{ij} \PtoBox q_h \ds\\
=& \sum_{\face_{ij}\in \F_h} \int_{\face_{ij}}
(\vec{x}-\faceCentre_{ij})^\intercal \nabla\vec{u}_B
\vec{n}_{ij} \jump{\PtoBox q_h}_{ij} \ds\\
=& \sum_{\face_{ij}\in \F_h} \int_{\face_{ij}}
(\vec{x}-\faceCentre_{ij}) \cdot
\pdd{\vec{u}_B}{}{\vec{n}_{ij}}
\jump{\PtoBox q_h}_{ij} \ds\\
=& \sum_{\face_{ij}\in \F_h}
\jump{\PtoBox q_h}_{ij}
\pdd{\vec{u}_B}{}{\vec{n}_{ij}}\cdot
\int_{\face_{ij}}(\vec{x}-\faceCentre_{ij})\ds=0,\\
\end{aligned}
\label{eq:interp_discr_consistency}
\end{equation}

For what concerns the last term in inequality \eqref{eq:consistency_0}, we resort to estimate \eqref{eq:rhie_chow_consistency}. This concludes the proof.
\end{delayedproof}

\begin{delayedproof}[Proof of Lemma \ref{lemma:boundedness}] \label{proof:boundedness}

Write the compact form $\C_B$:
\begin{equation}
\begin{aligned}
\C_B((\vec{v}, q),(\PtoBox \vec{v}_h, \PtoBox q_h)) =&
a_B(\vec{v},\PtoBox \vec{v}_h) +
b_B(\PtoBox\vec{v}_h,q) +
c_B(\vec{v},\PtoBox q_h) +
s_B(q,\PtoBox q_h)\\
=& (I) + (II) + (III) + (IV).
\end{aligned}
\end{equation}
Consider now each term separately. Employing Lemma \ref{lemma:bilforms_equivalence} and Proposition \ref{prop:starnorm_properties}. We have
\begin{equation*}
(I) = a(\vec{v},\vec{v}_h) \leq \nu \seminorm{\vec{v}}_{H^1}\seminorm{\vec{v}_h}_{H^1}
\leq \nu \seminorm{\vec{v}}_{H^1}\seminorm{\vec{v}_h}_{*}.
\end{equation*}

Knowing that $\norm{\nabla\cdot\vec{v}}_{L^2}\leq \sqrt{d}\norm{\nabla\vec{v}}_{L^2}\,\forall\vec{v}\in\bs{\V}$, by the Cauchy-Schwarz inequality, we obtain
\begin{equation*}
\begin{aligned}
(II)=&b(\vec{v}_h,q)\leq
\sqrt{d} \seminorm{\vec{v}_h}_{H^1}\norm{q}_{L^2}\leq
\sqrt{d} \seminorm{\vec{v}_h}_{*}\norm{q}_{L^2}, \\
(III)=&\sum_{\bx_i \in \B_h}\int_{\bx_i}
\nabla\cdot\vec{v}\PtoBox q_h\dx \\
\leq& \sqrt{d} \sum_{\bx_i \in \B_h}
\seminorm{\vec{v}}_{H^1(\bx_i)}\norm{\PtoBox q_h}_{L^2(\bx_i)}
\leq \sqrt{d} \seminorm{\vec{v}}_{H^1}\norm{\PtoBox q_h}_{L^2},
\end{aligned}
\end{equation*}
where we used the H\"older inequality in the last step.

For the Rhie-Chow stabilization we recall inequality \eqref{eq:rhie_chow_continuity_cont} to have
\begin{equation*}
(IV)\lesssim h^{\frac{3}{2}}
\seminorm{\nabla q}_{h,1}\seminorm{q_h}_{\triangle,*}.
\end{equation*}

Combining the above estimates, we have
\begin{equation*}
\begin{aligned}
\C_B((\vec{v},q),(\PtoBox \vec{v}_h,\PtoBox q_h))
\lesssim& \seminorm{\vec{v}}_{H^1}\seminorm{\vec{v}_h}_{*}
+ \seminorm{\vec{v}_h}_{*}\norm{q}_{L^2}
+ \seminorm{\vec{v}}_{*}\norm{\PtoBox q_h}_{L^2}
+ h^{\frac{3}{2}} \seminorm{\nabla q}_{h,1}\seminorm{q_h}_{\triangle,*} \\
\leq& \left(\seminorm{\vec{v}}_{H^1}
+ \norm{q}_{L^2} + h^{\frac{3}{2}} \seminorm{\nabla q}_{h,1}\right)\trinorm{\vec{v}_h,q_h}_{box}.
\end{aligned}
\end{equation*}

\end{delayedproof}

\begin{delayedproof}[Proof of Lemma \ref{lemma:boundedness_tilde}] \label{proof:boundedness_tilde}

We recall that
\begin{equation*}
\begin{aligned}
\widetilde{\C}_B((\vec{v}_h,q_h),(\PtoBox \vec{w}_h,\PtoBox z_h)) =&
a_B(\vec{v}_h,\PtoBox \vec{w}_h) +
\tilde{b}_B(\PtoBox\vec{w}_h,q_h) +
\tilde{c}_B(\vec{v}_h,\PtoBox z_h) +
\tilde{s}_B(q_h,\PtoBox z_h).
\end{aligned}
\end{equation*}
The estimates of the terms $a_B$ ans $s_B$ are obtained as in the proof of Lemma \ref{lemma:boundedness} (c.f. proof \ref{proof:boundedness}). On the other hand, for $\tilde{b}_B$ and $\tilde{c}_B$ we employ equation \eqref{eq:interp_discr_consistency} (from the proof of Lemma \ref{lemma:consistency}), and equation \eqref{eq:interp_discr_consistency}:
\begin{equation}
\begin{aligned}
\tilde{b}_B(\PtoBox\vec{w}_h,q_h)
\pm b_B(\PtoBox\vec{w}_h,q_h) =&
 b_B(\PtoBox\vec{w}_h,q_h) + (\tilde{b}_B - b_B)(\PtoBox\vec{w}_h, q_h)\\
\lesssim& \sqrt{d} \seminorm{\vec{w}_h}_{*}\norm{q_h}_{L^2}
+ h\seminorm{q_h}_{H^1}\seminorm{\vec{w}_h}_{\triangle,*}, \\
\tilde{c}_B(\vec{v}_h,\PtoBox z_h)
\pm c_B(\vec{v}_h,\PtoBox z_h) =&
 c_B(\vec{v}_h,\PtoBox z_h) + (\tilde{c}_B - c_B)(\vec{v}_h,\PtoBox z_h)\\
\lesssim& \sqrt{d} \seminorm{\vec{v}_h}_{*}\norm{\PtoBox z_h}_{L^2}. \\
\end{aligned}
\end{equation}

By equation \eqref{eq:rhie_chow_continuous_dicrete} and gathering the above estimates the proof is completed.
\end{delayedproof}

\begin{delayedproof}[Partial proof of Lemma \ref{lemma:coercivity}] \label{proof:coercivity}
Let us first notice that, using Proposition \ref{prop:sumconversion} and applying relationship \eqref{eq:pl_equality_on_face_centre} to $q_h$ and $\vec{v}_h$,
\begin{equation}
\begin{aligned}
\tilde{b}_B&(\PtoBox\vec{v}_h,q_h) + \tilde{c}_B(\vec{v}_h,\PtoBox q_h)=\\
=& \sum_{\face_{ij}\in \F_h}\int_{\face_{ij}}
(w_{ij}\PtoBox q_i + (1-w_{ij})\PtoBox q_j)\vec{n}_{ij}
\cdot\jump{\PtoBox \vec{v}_h}_{ij}\ds \\
&+\int_{\face_{ij}}
(w_{ij}\PtoBox \vec{v}_i + (1-w_{ij})\PtoBox \vec{v}_j)\cdot\vec{n}_{ij}
\jump{\PtoBox q_h}_{ij}\ds \\
=& \sum_{\face_{ij}\in \F_h} d_{ij} \int_{\face_{ij}}
\left[
    q_h \vec{n}_{ij} \cdot \pdd{\vec{v}_h}{}{\vec{n}_{ij}}
  + \vec{v}_h \cdot \vec{n}_{ij}
    \pdd{q_h}{}{\vec{n}_{ij}}
\right]\bigg\vert_{\vec{f}_{ij}}\ds \\
=& \sum_{\face_{ij}\in \F_h} d_{ij} \int_{\face_{ij}}
\pdd{q_h \vec{v}_h}{}{\vec{n}_{ij}}
\bigg\vert_{\vec{f}_{ij}}\cdot \vec{n}_{ij}\ds \\
=& \sum_{\face_{ij}\in \F_h}\sum_{\bx_j\in G_i} \int_{\face_{ij}}
\jump{\PtoBox q_h \PtoBox\vec{v}_h}_{ij}\cdot\vec{n}_{ij}\ds
= \sum_{\bx_i \in \B_h}\PtoBox q_i \PtoBox\vec{v}_i
\sum_{\bx_j\in G_i}\int_{\face_{ij}}\vec{1}\cdot\vec{n}_{ij}\ds \\
=& \sum_{\bx_i \in \B_h}
\PtoBox q_i \PtoBox\vec{v}_i
\int_{\bx_i}\nabla\cdot\vec{1}\dx = 0.\\
\end{aligned}
\label{eq:cbt_bbt_equivalence}
\end{equation}

Then, using Lemma \ref{lemma:bilforms_equivalence}, we have
\begin{equation*}
\begin{aligned}
\widetilde{\C}_B((\vec{v}_h,q_h),(\PtoBox \vec{v}_h,\PtoBox q_h))=&
a_B(\vec{v}_h,\PtoBox \vec{v}_h) +
\tilde{b}_B(\PtoBox\vec{v}_h,q_h) +
\tilde{c}_B(\vec{v}_h,\PtoBox q_h) +
\tilde{s}_B(q_h,\PtoBox q_h) \\
=& a(\vec{v}_h,\vec{v}_h) + \tilde{s}_B(q_h,\PtoBox q_h) \\
=& \nu\seminorm{\vec{v}_h}_{H^1}^2 + \tilde{s}_B(q_h,\PtoBox q_h).
\end{aligned}
\end{equation*}

Employing now Conjecture \ref{lemma:rhiechow_coercivity} concludes the proof.
\end{delayedproof}

\bibliographystyle{plain}
\bibliography{bibliography}

\end{document}